\newcommand{\R}{{\mathbb R}}
\newcommand{\sgn}{\mbox{\rm sgn}}
\newcommand{\N}{{\mathbb N}}
\newcommand{\LL}{{\mathbb L}}
\newcommand{\DD}{{\cal D}}
\newcommand{\HH}{{\cal H}}
\newcommand{\PP}{{\cal P}}
\newcommand{\FF}{{\cal F}}
\newcommand{\SSS}{{\cal S}}
\newcommand{\arrowp}{\mathop{\rightarrow}_{P}}
\newcommand{\Rd}{{{{\mathbb R}^d}}}
\newcommand{\dist}{\mbox{\rm dist}}
\newtheorem{theorem}{\bf Theorem}[subsection]
\newtheorem{proposition}[theorem]{\bf Proposition}
\newtheorem{lemma}[theorem]{\bf Lemma}
\newtheorem{corollary}[theorem]{\bf Corollary}
\theoremstyle{definition}
\newtheorem*{definition}{Definition}
\newtheorem{remark}[theorem]{Remark}
\newcommand{\nsubsection}{\setcounter{equation}{0}\subsection}
\begin{document}
\title{Reflected BSDEs in time-dependent  convex regions}
\author{Tomasz Klimsiak, Andrzej Rozkosz\footnote{Corresponding
author. Tel.: +48-56 6112953; fax: +48-56 6112987.}\ \ and Leszek
S\l omi\'nski
\mbox{}\\[2mm]
{\small  Faculty of Mathematics and Computer Science,
Nicolaus Copernicus University},\\
{\small Chopina 12/18, 87-100 Toru\'n, Poland}}
\date{}
\maketitle
\begin{abstract}
We prove the existence and  uniqueness of solutions of reflected
backward stochastic differential equations in time-dependent
adapted and c\`adl\`ag convex regions $\DD=\{D_t;t\in[0,T]\}$. We
also show that the solution may be approximated by solutions of
backward equations with reflection in appropriately defined
discretizations of $\DD$ and by a modified penalization method.
The approximation results are new even in the one-dimensional
case.
\end{abstract}
{\em Keywords:} Reflected backward stochastic differential
equation, time-dependent convex region, penalization method.

\footnotetext{{\em Email addresses:} tomas@mat.umk.pl (T.
Klimsiak), rozkosz@mat.umk.pl (A. Rozkosz), leszeks@mat.umk.pl (L.
S\l omi\'nski). }


\nsubsection{Introduction} \label{sec1}

In the present paper we investigate the problems of existence,
uniqueness and approximation of solutions to multidimensional
backward stochastic differential equations (BSDEs for short) with
reflection in time-dependent
random convex regions. 

In the one-dimensional case these problems are quite well
investigated. In the pioneering paper \cite{EKPPQ} reflected BSDEs
(RBSDEs) with one continuous barrier and Lipschitz continuous
coefficient are thoroughly investigated. Subsequently the results
of \cite{EKPPQ} were generalized to equations with possibly
discontinuous barrier or coefficient satisfying less restrictive
regularity or growth conditions (see, e.g., \cite{Kl:EJP,RS2} and
the references therein). One-dimensional RBSDEs with two
continuous  reflecting barriers were first studied in \cite{CK}.
Recent results for equations with two possibly discontinuous
barriers are to be found in \cite{HHO,Kl:BSM,PX}.

Existence, uniqueness and approximation by the penalization method
of multidimensional RBSDEs were for the first time studied in
\cite{GP} in the case of fixed convex domain. In \cite{BEO,PR} the
existence and uniqueness results of \cite{GP} were generalized to
equations involving subdifferential of a fixed proper convex
lower-semicontinuous function.

Our main goal is to generalize the results of \cite{GP} to the
case of time-dependent random regions and at the same time
generalize to the multidimensional case some one-dimensional
results proved in \cite{CK,EKPPQ,Kl:EJP,Kl:BSM,RS2,PX} for
continuous barriers or discontinuous barries satisfying the
so-called Mokobodzki condition.

In the paper, like in \cite{GP} and all the papers mentioned
above, we assume that the underlying filtration is generated by a
Wiener process. Reflected equations with underlying filtration
generated by a Wiener process and an independent random Poisson
measure are considered in \cite{HO,O}. In \cite{HO} existence and
uniqueness results for one-dimensional equations with one and two
reflecting c\`adl\`ag barriers are proved, while in \cite{O} the
multidimensional  results of \cite{GP} are generalized to
Wiener-Poisson type equations in fixed bounded convex domain in
$\R^d$.

We now describe more precisely the content of the paper. Let $W$
be a standard $d$-dimensional Wiener process and let $(\FF_t)$
denote the standard augmentation of the natural filtration
generated by $W$. Suppose we are given a family
$\DD=\{D_t;\,t\in[0,T]\}$ of time-dependent random closed convex
subsets of $\R^m$ with nonempty interiors, an $\FF_T$-measurable
random vector $\xi=(\xi^1,\dots,\xi^m)$ with values in $D_T$ (the
terminal value) and a measurable function
$f:[0,T]\times\Omega\times\R^m\times\R^{m\times d}\to\R^m$
(coefficient). In the paper we consider RBSDEs in $\DD$ of the
form
\begin{equation}
\label{eq1.1}
Y_t=\xi+\int^T_tf(s,Y_s,Z_s)\,ds -\int^T_tZ_s\,dW_s+K_T-K_t,\quad
t\in[0,T].
\end{equation}
By a solution to (\ref{eq1.1}) we understand
a triple $(Y,Z,K)$ of $(\FF_t)$-adapted processes such that
$Y_t\in D_t$ for $t\in[0,T]$, $K$ is a process of locally
bounded variation $|K|$ increasing only when $Y_t\in\partial D_t$
and (\ref{eq1.1}) is satisfied.

In \cite{GP} it is proved  that if $\xi\in L^2$,
$\int_0^T|f(s,0,0)|^2\,ds\in L^1$, $f$ is Lipschitz continuous in
both variables $y,z$ and $D_t=G$, $t\in[0,T]$, where $G$ is a
nonrandom convex set with nonempty interior then there exists a
unique solution $(Y,Z,K)$ of (\ref{eq1.1}) such that $Y,K$ are
continuous, $Y^*_T,K^*_T\in L^2$ and $Z\in \PP^2$, i.e. $Z$ is
progressively measurable and $(\int^T_0\|Z_t\|^2\,dt)^{1/2}\in
L^2$ (Here and in the sequel we use the notation
$X^*_t=\sup_{s\leq t} |X|_s$, $t\in[0,T]$). In the present paper
we make  the same assumptions on the terminal value $\xi$ and
coefficient $f$. Our main assumption on $\DD$ says that the
process $t\mapsto D_t$ is $(\FF_t)$-adapted and c\`adl\`ag with
respect to the Hausdorff metric, and one can find a semimartingale
$A$ of the class $\HH^2$ (see Section \ref{sec2} for the
definition) such that $A_t\in\mbox{\rm Int} D_t$ for $t\in[0,T]$
and $ \inf_{t\leq T} \mbox{\rm dist} (A_t,\partial D_t)>0$. The
last condition is an analogue of the so-called Mokobodzki
condition considered up to now only in the one-dimensional case
(see \cite{Kl:BSM,PX} and the references therein). In our main
theorem we prove that under the above assumptions on $\xi,f,\DD$
there exists a unique solution $(Y,Z,K)$ of (\ref{eq1.1}) such
that $Y,K$ are c\`adl\`ag and $Y,Z,K$ have the same integrability
properties as in \cite{GP}, i.e. $Y^*_T,K^*_T\in L^2$, $Z\in
\PP^2$. If, in addition, $t\mapsto D_t$ is continuous, then $Y,K$
are continuous. Therefore our theorem generalizes the results of
\cite{GP} to time-dependent regions and the same time generalizes
one-dimensional results with time-dependent barriers to the
multidimensional case. But let us note that in the one-dimensional
case one can also prove existence and uniqueness of solutions of
RBSDEs with two reflecting barriers $L,U$ such that
$\DD=\{(L_t,U_t),t\in[0,T]\}$ does not satisfy the Mokobodzki
condition (see \cite{Kl:BSM}) and with less restrictive
assumptions on $\xi,f$ (see \cite{HHO,Kl:BSM}).  In general, these
solutions have weaker integrability properties.

The uniqueness of solutions of (\ref{eq1.1})  can be proved by
some  modification of known methods. The idea behind our proof of
existence is as follows. We consider piece-wise constant
time-dependent processes $\DD^j$ such that $\DD^j\rightarrow\DD$
in the Hausdorff metric uniformy in probability. Then we prove
that on each random interval on which $\DD^j$ is a constant random
set there exists a unique solution of some local RBSDE. Piecing
the local solutions together we obtain a solution $(Y^j,Z^j,K^j)$
of (\ref{eq1.1}) in $\DD^j$. Finally, we show that the sequence
$\{(Y^j,Z^j,K^j)\}$ converges as $j\rightarrow\infty$ and its
limit is a solution of (\ref{eq1.1}) in $\DD$. The  method
described above is new even in the one-dimensional case. To our
knowledge the results on existence and uniqueness of local
solutions in random convex sets are also new.

We also consider approximation of solutions of (\ref{eq1.1}) by
the penalization method. This method proved to be useful in the
case of  one-dimensional RBSDEs with regular and irregular
barriers (see, e.g., \cite{EKPPQ,Kl:SPA,Kl:EJP,Kl:BSM,LX,RS2}). In
\cite{LX} it is observed that in the last case, i.e. if the
barriers are discontinuous, the usual method provides only
pointwise approximation of the first component $Y$ and weak
approximation of $K$ and the martingale part of the solution.  To
generalize the penalization method to the irregular
multidimesional case  and at the same time to get uniform
approximation of $Y$ and strong approximation of $K$ and the
martingale part we consider a modified scheme. It has the form
\begin{equation}
\label{eq1.2}
Y^n_t=\xi+\int_t^Tf(s,Y^n_s,Z^n_s)\,ds-\int_t^TZ^n_s\,dW_s
+K^n_T-K^n_t,
\end{equation}
where
\begin{align}
\label{eq1.3} K^n_t&=-n\int_0^t (Y^n_s-\Pi_{D_s}(Y^n_s))\,ds
-\sum_{0<\sigma_{n,i}\leq
t}(Y^n_{\sigma_{n,i}}-\Pi_{D_{{\sigma_{n,i}}-}}
(Y^n_{\sigma_{n,i}}))\nonumber \\
&\equiv K^{n,c}_t+K^{n,d}_t,\quad t\in[0,T]
\end{align}
with $\sigma_{n,0}=0$,
$\sigma_{n,i}=\inf\{t>\sigma_{n,i-1};\,\rho(D_t\cap B(0,n),D_{t-}
\cap B(0,n))>1/n\}\wedge T$, $i=1,\dots,k_n$, where $k_n$ is
chosen so that $P(\sigma_{n,k_n}<T)\to 0$ as $n\rightarrow\infty$
(Here $B(0,n)=\{x\in\Rd;\,|x|\leq n\}$, $n\in\N$). Note that $K^n$
is a c\`adl\`ag process of locally bounded variation such that
\[
K^n_0=0,\quad\Delta K^n_{\sigma_{n,i}}
=\Pi_{D_{\sigma_{n,i}-}}(Y^n_{\sigma_{n,i}})-Y^n_{\sigma_{n,i}}
=-\Delta Y^n_{\sigma_{n,i}}.
\]
In fact, on  each interval $[\sigma_{n,i-1},\sigma_{n,i})$,
$i=1,\dots,{k_n+1}$, where $\sigma_{n,{k_n+1}}=T$, the pair
$(Y^n,Z^n)$ is a solution of the classical BSDEs with  Lipschitz
coefficients of the form
\begin{align*}
Y^n_t&=\Pi_{D_{\sigma_{n,i}-}}(Y^n_{\sigma_{n,i}})
+\int^{\sigma_{n,i}}_tf(s,Y^n_s,Z^n_s)\,ds
-\int^{\sigma_{n,i}}_tZ^n_s\,dW_s\\
&\quad -n\int^{\sigma_{n,i}}_t (Y^n_s-\Pi_{D_s}(Y^n_s))\,ds,\quad
t\in[\sigma_{n,i-1},\sigma_{n,i}).
\end{align*}
Notice that as compared with the usual penalization method, in the
penalization term $K^n$ the discontinuous part $K^{n,d}$ appears.
If the mapping $t\mapsto D_t$ is continuous then $K^{n}=K^{n,c}$,
so (\ref{eq1.2}), (\ref{eq1.3}) reduce to the usual penalization
scheme. We show that under the above-mentioned assumptions under
which there exists a unique solution $(Y,Z,K)$ of (\ref{eq1.1}),
it is a limit in probability of $\{(Y^n,Z^n,K^n)\}$ in the space
$\SSS\times\PP\times\SSS$ (see Section \ref{sec2} for its
definition). This result is new even for one-dimensional RBSDEs
with one discontinuous barrier.

It is known that one can use RBSDE to investigate viscosity
solutions (see \cite{EKPPQ,MPRZ,PR}) or weak solutions (see
\cite{Kl:SPA,Kl:PA,KR:JEE,RS1}) of variational inequalities. In
fact, this work was intended as the first step to investigate by
probabilistic methods this sort of problems for systems with
time-dependent constraints. These problems, however, will be
studied elsewhere.

\nsubsection{Notation and preliminary estimates}
\label{sec2}

For $x\in\R^m$, $z\in\R^{m\times d}$ we set
$|x|^2=\sum^m_{i=1}|x_i|^2$, $\|z\|^2=\mbox{trace}(z^*z)$.
$\langle\cdot,\cdot\rangle$ denotes the usual scalar product in
$\R^m$.

Let $(\Omega,{\cal F},P)$ be a complete probability space. By $W$
we denote  a standard $d$-dimensional Wiener process on $(\Omega,
{\cal F}, P)$  and by $(\FF_t)$ the standard augmentation of the
natural filtration generated by $W$.

$L^p$, $p\geq1$, is the space of random vectors $X$ such that
$\|X\|_p=E(|X|^p)^{1/p}<\infty$. ${\SSS}^p$ is the space of
c\`adl\`ag adapted (with respect to $(\FF_t)$) processes $X$ such
that $\|X\|_{{\SSS}^p} =\|X^*_T\|_p<\infty$ and ${\PP}^p$ is the
space of progressively measurable $m\times d$-dimensional
processes $Z$ such that $\|Z\|_{{\PP}^p}
=\|(\int_0^T\|Z_s\|^2\,ds)^{1/2}\|_p<\infty$. ${\SSS}$ is the
space of c\`adl\`ag  adapted processes equipped with the metric
$\delta(X,X')=E((X-X')^*\wedge1)$  and ${\cal P}$ is the space of
progressively measurable $m\times d$-dimensional processes $Z$
such that $\int_0^T\|Z_s\|^2\,ds<\infty$, $P$-a.s. equipped with
the metric $\delta'(Z,Z')=E(\int_0^T\|Z_s-Z'_s\|^2\,ds\wedge1)$.
It is well known that ${\SSS}^p$, ${\PP}^p$ are Banach spaces for
$p\geq1$ and that ${\SSS}$, ${\PP}$  are complete metric spaces.
By ${\HH}^2$ we denote the space of $m$-dimensional special
semimartingales equipped with the norm
\[
\|X\|_{{\HH}^2} =\|[M]^{1/2}_T\|_{L^2} +\||B|_T\|_{L^2},
\]
where $X=M+B$ is the canonical decomposition of $X$, $[M]_T$
is the quadratic variation of $M$ at $T$ and $|B|_T$ is the variation of
$B$ on the interval $[0,T]$.

Given a process $Y$ and an $(\FF_t)$-stoping time $\tau$ we denote
by $Y^{\tau}$ the stopped process $\{Y_{t\wedge\tau};t\in[0,T]\}$.

By $Conv$ we denote the space of all bounded closed convex subsets
of $\R^m$ with nonempty interiors endowed with the Hausdorff
metric $\rho$, i.e. any $G,G'\in Conv$,
\[
\rho(G,G')=\max\big(\sup_{x\in G}\dist(x,G'),\sup_{x\in G'}
\dist(x,G)\big),
\]
where $\dist(x,G)=\inf_{y\in G}|x-y|$).

\begin{remark}[see Protter \cite{Pr}]
\label{rem2.1}
{(a)} For a special semimartingale $X$,
\[
\|X\|_{{\cal H}^2}\leq 3 \sup_{H\,\mbox{\rm\tiny
predictable},\,|H|\leq1} \|(\int_0^{\cdot}
H_s\,dX_s)^*\|_{L^2}\leq9\|X\|_{{\cal H}^2}.
\]
(b) $\|X\|_{{\cal S}^2}\leq c\|X\|_{{\cal H}^2}$  and
$\|[X]_T^{1/2}\|_{\LL^2}\leq\|X\|_{{\cal H}^2}$. Moreover, for any
predictable and locally bounded $H$,
\[
\|\int_0^{\cdot} H_s\,dX_s\|_{{\cal H}^2}
\leq\|H^*\|_{L^2}\|X\|_{{\cal H}^2}.
\]
\end{remark}

\begin{remark}[see Menaldi \cite{Me}]
\label{rem2.2} (a) Let $G$ be a closed convex domain with nonempty
interior and let ${\cal N}_{y}$ denote the set of inward normal
unit vectors at $y\in\partial G$. It is well known that ${\bf
n}\in{\cal N}_{y}$ iff $\langle y-x,\mbox{\bf n}\rangle\leq 0$.
for every $x\in G$ (Here $\langle\cdot,\cdot\rangle$ stands for the usual
inner product in $\R^d$).\\
(b) If moreover $a\in\mbox{\rm Int} G$ then for every  ${\bf n}
\in{\cal N}_{y}$,
\[
\langle y-a,\mbox{\bf n}\rangle\leq-\dist(a,\partial G).
\]
(c) If $\dist(x,G)>0$ then there exists a unique
$y=\Pi_G(x)\in\partial G$ such that $|y-x|=\dist(x,G)$. One can
observe that $(y-x)/|y-x|\in{\cal N}_{y}$. Moreover, for every
$a\in\mbox{\rm Int} G$,
\[
\langle x-a,y-x\rangle\leq-\dist(a,\partial G)|y-x|.
\]
(d) For all $x,x'\in\R^m$,
\[
\langle x-x',(x-\Pi_G(x))-(x'-\Pi_G(x'))\rangle\geq0.
\]
\end{remark}

In the paper we  assume that we are given an ${\cal
F}_T$-measurable $m$-dimensional random vector $\xi$, a generator
$f:[0,T]\times\Omega\times\R^m\times\R^{m\times d}\to\R^m$, which
is measurable with respect to $Prog\otimes {\cal B}
(\R^m)\otimes{\cal B}(\R^{m\times d})$, where $Prog$ denotes the
$\sigma$-field of all progressive subsets of $[0,T]\times\Omega$
and a family $\DD=\{D_t;\,t\in[0,T]\}$ of random  closed convex
sets in $\R^m$ with nonempty interiors such that the process
$[0,T]\ni t\mapsto D_t\in Conv$ is $(\FF_t)$-adapted. Moreover, we
will assume that
\begin{enumerate}
\item[(H1)]$\xi\in D_T$, $\xi\in L^2$,
\item[(H2)]$E\int_0^T|f(s,0,0)|^2\,ds<\infty$,
\item[(H3)]There are $\mu,\lambda\ge0$ such that for any $t\in[0,T]$,
\[
|f(t,y,z)-f(t,y',z')|\le\mu|y-y'|+\lambda\|z-z'\|,\quad
\,y,y'\in\R^m,\, z,z'\in\R^{m\times d},
\]
\item[(H4)]For each $N\in\N$ the mapping $t\to D_t\cap B(0,N)\in Conv$
is c\`adl\`ag $P$-a.s. (with the convention that $D_{T}=D_{T-}$)
and there is a semimartingale $A\in{\cal H}^2$ such that
$A_t\in\mbox{\rm Int} D_t$ for $t\in[0,T]$  and
\[
\inf_{t\leq T}\mbox{\rm dist}(A_t,\partial D_t)>0.
\]
\end{enumerate}

In (H1), (H3), (H4) and  in the sequel we understand that the
equalities and inequalities hold true $P$-a.s.

\begin{definition}
\label{def2.3} We say that  a triple $(Y, Z,K)$  of $(\FF_t)$-progressively
measurable  processes is a solution of the RBSDE (\ref{eq1.1}) if
\begin{enumerate}
\item[(a)]$Y_t\in D_t$, $t\in[0,T]$,
\item[(b)] $K$ is a c\`adl\`ag process of locally bounded  variation
such that $K_0=0$ and for every $(\FF_t)$ adapted c\`adl\`ag
process $X$ such that $X_t\in D_t$, $t\in[0,T]$, we have
\[
\int_0^T\langle Y_{s-}-X_{s-},dK_s\rangle\leq 0,
\]
\item[(c)]Eq. (\ref{eq1.1}) is satisfied.
\end{enumerate}
\end{definition}

\begin{proposition}
\label{prop2.4} Assume \mbox{\rm(H1)--(H4)}. If $(Y,Z,K)$ is a
solution of \mbox{\rm(\ref{eq1.1})} such that $Y\in{\cal S}^2$
then there exists $C>0$ depending only on $\mu,\lambda, T$ such
that
\begin{align*}
&E\big((Y^*_T)^2+\int_0^T\|Z_s\|^2\,ds
+\sum_{0<s\leq T}|\Delta K_s|^2+(K^*_T)^2
+\int_0^T\dist(A_{s-},\partial D_{s-})\,d|K|_s\big) \\
&\qquad\leq C\Big(
E\big(|\xi|^2+\int_0^T|f(s,0,0)|^2\,ds\big)+\|A\|^2_{{\cal
H}^2}\Big)
\end{align*}
\end{proposition}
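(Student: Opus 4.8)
The plan is to run an energy estimate obtained by applying It\^o's formula to $|Y_t-A_t|^2$. Subtracting the semimartingale $A$ is essential: since $A_t\in\mbox{\rm Int}\,D_t$, the reflection term can be controlled through Definition \ref{def2.3}(b) and, more precisely, through the projection inequalities of Remark \ref{rem2.2}(b),(c). Writing the canonical decomposition $A=A_0+N+B$ with $N$ the martingale part and $B$ the finite variation part, and using $dY_s=-f(s,Y_s,Z_s)\,ds+Z_s\,dW_s-dK_s$, It\^o's formula on $[t,T]$ produces an identity whose reflection term $2\int_t^T\langle Y_{s-}-A_{s-},dK_s\rangle$ is, by Remark \ref{rem2.2}(b),(c), bounded above by $-2\int_t^T\dist(A_{s-},\partial D_{s-})\,d|K|_s$, and whose quadratic variation term $[Y-A]_T-[Y-A]_t$ has a continuous part dominating (up to $\|A\|^2_{\HH^2}$) the quantity $\int_t^T\|Z_s\|^2\,ds$ and a jump part equal to $\sum_{t<s\le T}|\Delta K_s+\Delta A_s|^2$, from which $\sum|\Delta K_s|^2$ is recovered via $|\Delta K_s|^2\le 2|\Delta K_s+\Delta A_s|^2+2|\Delta A_s|^2$ and $\sum_{s\le T}|\Delta A_s|^2\le[A]_T$. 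After rearranging, the three quantities $\int\|Z\|^2$, $\sum|\Delta K|^2$ and $\int\dist\,d|K|$ all appear on the left-hand side with a favourable sign.

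Next I would estimate the drift by Young's inequality together with \mbox{\rm(H2)}, \mbox{\rm(H3)}, writing $2\langle Y_{s-}-A_{s-},f(s,Y_s,Z_s)\rangle\le C|Y_s-A_s|^2+\frac14\|Z_s\|^2+C(|f(s,0,0)|^2+|A_s|^2)$ and choosing the coefficient in front of $\|Z_s\|^2$ small enough to be absorbed by the good quadratic-variation term on the left. The contributions of $\langle Y_{s-}-A_{s-},dB_s\rangle$ and of the brackets of $N$ are dominated by $\|A\|^2_{\HH^2}$ and by $\sup_t|Y_t-A_t|^2$ using Remark \ref{rem2.1}. The two stochastic integrals $\int\langle Y_{s-}-A_{s-},Z_s\,dW_s\rangle$ and $\int\langle Y_{s-}-A_{s-},dN_s\rangle$ are local martingales; after stopping along a reducing sequence, taking expectations and passing to the limit (Fatou on the left, dominated convergence on the right, using $Y\in\SSS^2$ and $A\in\HH^2$) they drop out, and this is precisely the step that turns the a priori possibly infinite $\int\|Z\|^2$ into an integrable quantity.

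Taking expectations and applying Gronwall's lemma on $[0,T]$ then yields $\sup_t E|Y_t-A_t|^2$ together with the bounds on $E\int\|Z\|^2$, $E\sum|\Delta K|^2$ and $E\int\dist\,d|K|$ in terms of $E(|\xi|^2+\int_0^T|f(s,0,0)|^2\,ds)+\|A\|^2_{\HH^2}$. To get $E(Y^*_T)^2$ I would take $\sup_t$ in the same inequality before taking expectations and bound the martingale terms by the Burkholder--Davis--Gundy inequality (again absorbing the resulting $\int\|Z\|^2$ into the left), and then pass from $Y-A$ to $Y$ via $Y^*_T\le(Y-A)^*_T+A^*_T$ and $\|A^*_T\|_{L^2}\le c\|A\|_{\HH^2}$ from Remark \ref{rem2.1}(b). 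Finally $E(K^*_T)^2$ requires no extra work: reading off $K_t=Y_0-Y_t-\int_0^tf(s,Y_s,Z_s)\,ds+\int_0^tZ_s\,dW_s$ from \mbox{\rm(\ref{eq1.1})} and using BDG together with the already established bounds on $Y$ and $Z$ gives it at once.

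The step I expect to be the main obstacle is the localization/absorption argument: because $Z\in\PP^2$ is not assumed but is part of the assertion, one must stop, take expectations and only then let the localizing times increase to $T$, taking care that the good terms pass to the limit by Fatou while the $A$- and $f$-dependent terms stay uniformly controlled, and simultaneously keeping the coefficient of $\|Z\|^2$ coming from Young's inequality strictly below that of the good bracket term so that it can be absorbed. The careful bookkeeping of the jumps of $A$ (which need not vanish here) in both the bracket and the reflection terms is the other delicate point.
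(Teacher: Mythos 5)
Your argument is correct and rests on the same two pillars as the paper's proof --- the inward-normal inequality of Remark \ref{rem2.2}(b), which produces the good term $-\int\dist(A_{s-},\partial D_{s-})\,d|K|_s$, and localization plus Gronwall plus Burkholder--Davis--Gundy, with $K^*_T$ read off from the equation at the end --- but the bookkeeping is organized differently. The paper applies It\^o's formula to $|Y|^2$ and splits the reflection term as $\int\langle Y_{s-}-A_{s-},dK_s\rangle+\int\langle A_{s-},dK_s\rangle$, integrating the second piece by parts; this generates the terms $K^*_{\tau_n}A^*_T$, $\int\langle K_{s-},dA_s\rangle$ and $\sum\langle\Delta K_s,\Delta A_s\rangle$, and hence a two-stage bootstrap (first all quantities in terms of $E(Y^*_T)^2$, then $E(Y^*_T)^2$ itself) in which the constants in front of $E(K^*_{\tau_n})^2$ and $E\sum_s|\Delta K_s|^2$ must be tuned so that these terms can be reabsorbed. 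Your shift to $|Y-A|^2$ makes the reflection term negative at once and trades the $K$--$A$ cross terms for $(Y-A)$--$A$ cross terms, which avoids the circular dependence on $K^*$ altogether; the price is the cross bracket between $\int Z\,dW$ and the martingale part of $A$ and the jump identity $\Delta(Y-A)=-\Delta K-\Delta A$, both of which you handle correctly via $\|A\|^2_{\HH^2}$. The one step to write out with care is $2\int\langle Y_{s-}-A_{s-},dB_s\rangle$: bounding it by $\varepsilon\sup_t|Y_t-A_t|^2+C_\varepsilon|B|^2_T$ puts a multiple of $E\sup_t|Y_t-A_t|^2$ on the right of an inequality for $E|Y_t-A_t|^2$, so you must either run the estimate directly at the level of $E\sup_{s\in[t,T]}|Y_s-A_s|^2$ or absorb that term after the BDG step, using the a priori finiteness guaranteed by $Y\in\SSS^2$ and $A\in\HH^2$ --- exactly the absorption the paper performs in (\ref{eq2.6}).
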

\begin{proof} We first show that
\begin{align}
\label{eq2.1} &\nonumber E\big(\int_0^T\|Z_s\|^2\,ds
+\sum_{0<s\leq T}|\Delta K_s|^2+(K^*_T)^2
+\int_0^{T}\dist(A_{s-},\partial D_{s-})\,d|K|_s\big)\big)\\
&\qquad\leq
C\Big(E\big((Y^*_T)^2+\int_0^T|f(s,0,0)|^2\,ds\big)
+\|A\|^2_{{\cal H}^2}\Big).
\end{align}
Let $\tau_n=\inf\{t>0;\int_0^t\|Z_s\|^2ds>n\}\wedge T$, $n\in\N$.
By It\^o's formula,
\begin{align*}
|Y_0|^2+\int_0^{\tau_n}\|Z_s\|^2\,ds+\sum_{s\leq \tau_n}|\Delta
K_s|^2
&=|Y_{\tau_n}|^2+2\int_0^{\tau_n}\langle Y_{s},f(s,Y_s,Z_s)\rangle\,ds\\
&\quad-2\int_0^{\tau_n}\langle Y_{s},Z_s\,dW_s\rangle
+2\int_0^{\tau_n}\langle Y_{s-},dK_s\rangle.
\end{align*}
By Remark \ref{rem2.2}(b), the integration by parts formula and
the fact that $dK_t=\mbox{\bf n}_{Y_t}\,d|K|_t$,
\begin{align}
\label{eq2.02} \int_0^{\tau_n}\langle Y_{s-},dK_s\rangle&
=\int_0^{\tau_n}\langle Y_{s-}-A_{s-},dK_s\rangle
+\int_0^{\tau_n}\langle A_{s-},dK_s\rangle\nonumber\\
&\leq-\int_0^{\tau_n}\dist(A_{s-},\partial D_{s-})\,d|K|_s\nonumber\\
&\quad+\langle K_{\tau_n},A_{\tau_n}\rangle
-\int_0^{\tau_n}\langle K_{s-},dA_s\rangle -\sum_{s\leq
\tau_n}\langle \Delta K_s,\Delta A_s\rangle,
\end{align}
whereas by (H3),
\[
\int_0^{\tau_n}\langle Y_{s},f(s,Y_s,Z_s)\rangle\,ds \leq
C_1\big((Y^*_T)^2+\int_0^T|f(s,0,0)|^2ds\big)
+\frac14\int_0^{\tau_n}||Z_s||^2ds
\]
for some $C_1>0$. Putting together the above inequalities and
using the fact that
\[
-2\sum_{s\leq \tau_n}\langle \Delta K_s,\Delta A_s\rangle
\leq2(\sum_{s\leq \tau_n}|\Delta K_s|^2)^{1/2}(\sum_{s
\leq \tau_n}|\Delta A_s|^2)^{1/2}
\leq\frac12{\sum_{s\leq \tau_n}|\Delta K_s|^2}+2[A]_T
\]
we obtain
\begin{align}
&\nonumber\int_0^{\tau_n}\|Z_s\|^2\,ds
+\sum_{s\leq \tau_n}|\Delta K_s|^2
+4\int_0^{\tau_n}\dist(A_{s-},\partial D_{s-})\,d|K|_s\\
&\qquad \nonumber \leq 2(Y^*)^2+2C_1\big((Y^*_T)^2
+\int_0^T|f(s,0,0)|^2ds\big)+4K^*_{\tau_n}A^*_T+4[A]_T\\
&\qquad\quad-4\int_0^{\tau_n}\langle
K_{s-},dA_s\rangle-4\int_0^{\tau_n}\langle
Y_{s},Z_s\,dW_s\rangle\label{eq2.2}.
\end{align}
Since $K_t=Y_0-Y_t-\int_0^tf(s,Y_s,Z_s)\,ds+\int_0^t Z_s\,dW_s$,
$t\in[0,T]$, we have
\[
K^*_{\tau_n}\leq(2+\mu T)Y^*_T
+\int_0^T|f(s,0,0)|\,ds+\lambda\int_0^{\tau_n}\|Z_s\|\,ds
+\sup_{t\leq \tau_n}|\int_0^t Z_s\,dW_s|.
\]
Therefore there is $C_2>0$ such that
\begin{equation}
\label{eq2.3} E(K^*_{\tau_n})^2\leq C_2E\big((Y^*_{T})^2
+\int_0^T|f(s,0,0)|^2\,ds+\int_0^{\tau_n}\|Z_s\|^2\,ds\big).
\end{equation}
Since $E|\int_0^{\tau_n}\langle K_{s-},dA_s\rangle|\leq
c\|K^*_{\tau_n}\|_{L^2}\|A\|_{{\cal H}^2}$ and
$\int_0^{t\wedge\tau_n}\langle Y_{s},Z_s\,dW_s\rangle$  is a
uniformly integrable martingale, from (\ref{eq2.2}) it follows
that there is $C_3>0$ such that
\begin{align}
&\nonumber E\big(\int_0^{\tau_n}\|Z_s\|^2\,ds +\sum_{s\leq
\tau_n}|\Delta K_s|^2
+4\int_0^{\tau_n}\dist(A_{s-},\partial D_{s-})\,d|K|_s\big)\\
&\qquad \label{eq2.4}\leq
C_3\Big(E\big((Y^*)^2+\int_0^T|f(s,0,0)|^2\,ds
+(A^*_T)^2+[A]_T\big)+\|A\|^2_{{\cal H}^2}\Big)\nonumber\\
&\qquad\quad+(2C_2)^{-1}E(K^*_{\tau_n})^2.
\end{align}
Combining (\ref{eq2.3}) with (\ref{eq2.4}), using the fact that
$E(A^*_T)^2, E([A]_T)\leq \|A\|^2_{{\cal H}^2}$ and then letting $n\to
\infty$ we obtain (\ref{eq2.1}).

In the second part of the proof  we will estimate $E(Y^*_T)^2$.
Using It\^o's formula gives
\begin{align}
&|Y_t|^2+\int_t^{T}\|Z_s\|^2\,ds+\sum_{t<s\leq T}|\Delta K_s|^2\nonumber\\
&\quad=|\xi|^2+2\int_t^{T}\langle Y_{s},f(s,Y_s,Z_s)\rangle\,ds
-2\int_t^{T}\langle Y_{s},Z_s\,dW_s\rangle
+2\int_t^{T}\langle Y_{s-},dK_s\rangle\nonumber\\
&\quad\leq|\xi|^2+\int_0^T|f(s,0,0)|^2\,ds+(2\mu
+2\lambda^2+1)\int_t^T|Y_s|^2\,ds\nonumber\\
&\qquad+\frac12\int_t^T\|Z_s\|^2\,ds-2\int_t^{T}\langle
Y_{s},Z_s\,dW_s\rangle+2\int_t^{T}\langle A_{s-},dK_s\rangle.
\end{align}
From this we deduce that there is $C_4>0$ such that
\begin{equation}
\label{eq2.5} |Y_t|^2+\frac12\int_t^{T}\|Z_s\|^2\,ds\leq
X+C_4\int_t^T|Y_s|^2\,ds-2\int_t^{T}\langle
Y_{s},Z_s\,dW_s\rangle,\quad t\in[0,T],
\end{equation}
where $X=|\xi|^2+\int_0^T|f(s,0,0)|^2\,ds +\sup_{t\leq T}|\int_0^t
\langle A_{s-},dK_s\rangle$. Note that from (H1), (H2) and earlier
considerations it follows that $X$ is integrable. Since
$\int_0^t\langle Y_{s},Z_s\,dW_s\rangle$ is a uniformly integrable
martingale,
\[
E\int_t^{T}\|Z_s\|^2\,ds\leq 2EX+2C_4E\int_t^T|Y_s|^2\,ds,\quad
t\in[0,T].
\]
Consequently,
\begin{align}
\label{eq2.6} \nonumber E\sup_{s\in[t,T]}|Y_s|^2&\leq
EX+C_4E\int_t^T|Y_s|^2\,ds+2cE(\int_t^T|Y_s|^2\|Z_s\|^2\,ds)^{1/2}\\
&\leq\nonumber EX+C_4E\int_t^T|Y_s|^2\,ds
+2cE\big(\sup_{s\in[t,T]}|Y_s|(\int_t^T|Y_s|^2\|Z_s\|^2\,ds)^{1/2}\big)\\
\nonumber &\leq EX+C_4E\int_t^T|Y_s|^2\,ds
+\frac12E(\sup_{s\in[t,T]}|Y_s|^2)+2c^2E(\int_t^T\|Z_s\|^2\,ds)^{1/2}\\
&\leq(4c^2+1)EX+C_4(4c^2+1)E\int_t^T|Y_s|^2\,ds
+\frac12E(\sup_{s\in[t,T]}|Y_s|^2).
\end{align}
Therefore there are  $C_5,C_6>0$ such that
\[
E\sup_{s\in[t,T]}|Y_s|^2\leq C_5EX
+C_6\int_t^TE(\sup_{u\in[s,T]}|Y_u|^2)\,ds,\quad t\in[0,T].
\]
Hence, by Gronwall's lemma,  $E(Y^*_T)^2\leq C_5EXe^{C_6T}$. Since
by the integration by part formula and the previously used
arguments there is $C_7>0$ such that
\[
E\sup_{t\leq T}|\int_0^t \langle A_{s-},dK_s\rangle \leq
(2C_1C_5)^{-1}e^{-C_6T}E\big(\sum_{0<s\leq T}|\Delta K_s|^2
+(K^*_T)^2\big)+C_7\|A\|^2_{{\cal H}^2},
\]
it follows from (\ref{eq2.1}) that
\begin{align*}
E(Y^*_T)^2&\leq C_5e^{C_6T}E\big(|\xi|^2+\int_0^T|f(s,0,0)|^2\,ds\big)
+\frac12E(Y^*_T)^2\\
&\quad+\frac12\big(E\int_0^T|f(s,0,0)|^2\,ds+\|A\|^2_{{\cal H}^2}\big)
+C_5C_7e^{C_6T}\|A\|^2_{{\cal H}^2},
\end{align*}
which completes the proof.
\end{proof}

Let $\DD'=\{D'_t;\,t\in[0,T]\}$ be another family of random closed
sets satisfying (H4) with some semimartingale $A'$ and let $\xi'$
be an $\FF_T$-measurable  random variable such that $\xi'\in D'_T$
and $\xi'\in L^2$. In the following proposition together with
(\ref{eq1.1}) we consider RBSDE with terminal condition $\xi'$,
coefficient $f$ and family $\DD'$, i.e. equation of the form
\begin{equation}
\label{eq2.7}
Y'_t=\xi'+\int^T_tf(s,Y'_s,Z'_s)\,ds
-\int^T_t\langle Z'_s\,dW_s\rangle+K'_T-K'_t,\quad t\in[0,T].
\end{equation}
In its proof we will use the following notation
\[
\sgn(x)=\frac{x}{|x|}{\bf 1}_{\{x\neq0\}},\quad x\in\Rd.
\]

\begin{proposition}
\label{prop2.5} Let $(Y,Z,K)$ and $(Y',Z',K')$ be solutions of
\mbox{\rm(\ref{eq1.1})} and \mbox{\rm(\ref{eq2.7})}, respectively,
and let $\bar Y=Y-Y'$, $\bar Z=Z-Z'$, $\bar K=K-K'$. If $f$
satisfies \mbox{\rm(H3)} and $Y,Y'\in{\cal S}^2$  then for every
$p\in(1,2]$ there exists $C>0$ depending only on $\mu,\lambda, T$
such that for any stopping time $\sigma$ such that
$0\leq\sigma\leq T$ we have
\begin{align*}
&E\big(( \sup_{t<\sigma}|\bar Y_t|^p
+\int_0^\sigma|\bar Y_s|^{p-2}{\bf 1}_{\{\bar Y_s\neq0\}}\|\bar Z_s\|^2\,ds
+I_{\sigma-}\big)\\
&\quad\leq C\Big(E(|\bar Y_{\sigma-}|^p +\int_0^{\sigma-}|\bar
Y_{s-}|^{p-2}|
\Pi_{D_{s-}}(Y'_{s-})-Y'_{s-}|{\bf 1}_{\{Y'_{s-}\notin D_{s-}\}}\,d|K|_s)\\
&\qquad\qquad+E(\int_0^{\sigma-}|\bar
Y_{s-}|^{p-2}|\Pi_{D'_{s-}}(Y_{s-})-Y_{s-}|{\bf 1}_{\{Y_{s-}\notin
D'_{s-}\}}\,d|K'|_s\Big),
\end{align*}
where $I_t=\sum_{s\leq t}(|\bar Y_s|^p-|\bar Y_{s-}|^p-p|\bar
Y_{s-}|^{p-1}\langle \sgn(\bar Y_{s-}),\Delta \bar Y_s\rangle)$,
$t\ge0$.
\end{proposition}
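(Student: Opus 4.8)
The plan is to apply the Meyer--It\^o change-of-variable formula to the convex function $\phi(x)=|x|^p$ along the semimartingale $\bar Y$, whose dynamics follow by subtracting (\ref{eq2.7}) from (\ref{eq1.1}):
\[
d\bar Y_s=-\big(f(s,Y_s,Z_s)-f(s,Y'_s,Z'_s)\big)\,ds+\bar Z_s\,dW_s-d\bar K_s .
\]
Since for $p\in(1,2)$ the map $x\mapsto|x|^p$ fails to be $C^2$ at the origin, I would first work with the smooth approximations $\phi_\varepsilon(x)=(|x|^2+\varepsilon)^{p/2}$, apply the ordinary It\^o formula, and only at the end let $\varepsilon\downarrow0$. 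Grouping the terms of the formula for $\phi_\varepsilon(\bar Y)$ produces a drift term, a martingale term, a reflection integral against $d\bar K$, a nonnegative second-order term, and a jump remainder. Because $W$ is continuous, the jumps of $\bar Y$ come only from $\bar K$, so as $\varepsilon\downarrow0$ the remainder is exactly the sum defining $I_t$; by convexity of $\phi$ each summand is nonnegative, so $I_{\sigma-}$ stays on the left-hand side.

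Two sign computations drive the estimate. First, for any column $\zeta$ one computes $\langle D^2\phi_\varepsilon(x)\zeta,\zeta\rangle=p(|x|^2+\varepsilon)^{p/2-1}|\zeta|^2+p(p-2)(|x|^2+\varepsilon)^{p/2-2}\langle x,\zeta\rangle^2$, and since $p-2\le0$ and $\langle x,\zeta\rangle^2\le(|x|^2+\varepsilon)|\zeta|^2$,
\[
\langle D^2\phi_\varepsilon(x)\zeta,\zeta\rangle\ge p(p-1)(|x|^2+\varepsilon)^{p/2-1}|\zeta|^2;
\]
summing over the columns of $\bar Z_s$ and letting $\varepsilon\downarrow0$ yields the term $\int_0^\sigma|\bar Y_s|^{p-2}\fch_{\{\bar Y_s\neq0\}}\|\bar Z_s\|^2\,ds$ on the left, the indicator appearing because $d[\bar Y^c]_s=\bar Z_s\bar Z_s^*\,ds$ charges no jump time. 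Second, for the reflection term, which sits on the right after rearrangement as $p\int|\bar Y_{s-}|^{p-2}\langle\bar Y_{s-},d\bar K_s\rangle$, I would decompose
\[
\bar Y_{s-}=\big(Y_{s-}-\Pi_{D_{s-}}(Y'_{s-})\big)+\big(\Pi_{D_{s-}}(Y'_{s-})-Y'_{s-}\big)
\]
against $dK_s$, and symmetrically decompose $-\bar Y_{s-}$ through $\Pi_{D'_{s-}}(Y_{s-})$ against $dK'_s$. Since $\Pi_{D_{s-}}(Y'_{s-})\in D_{s-}$ and $dK$ points along the inward normal at the reflection point, Remark \ref{rem2.2}(a) (backed by Definition \ref{def2.3}(b)) makes the first bracket contribute a nonpositive amount, while the second is dominated by $|\Pi_{D_{s-}}(Y'_{s-})-Y'_{s-}|\,d|K|_s$ and vanishes unless $Y'_{s-}\notin D_{s-}$. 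This produces exactly the two integrals against $d|K|_s$ and $d|K'|_s$ on the right-hand side.

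For the drift block I would use (H3), $|\bar f_s|\le\mu|\bar Y_s|+\lambda\|\bar Z_s\|$, and Young's inequality to bound it by $p\mu|\bar Y_s|^p+p\lambda|\bar Y_s|^{p-1}\|\bar Z_s\|\le C|\bar Y_s|^p+\tfrac{c}{4}|\bar Y_s|^{p-2}\fch_{\{\bar Y_s\ne0\}}\|\bar Z_s\|^2$, the last piece being absorbed by the second-order term. I would then run the argument in two stages. Taking $t=0$ and expectations kills the (true, since $Y,Y'\in\SSS^2$) martingale and, after the absorptions, gives the bound for $E\big(\int_0^\sigma|\bar Y_s|^{p-2}\fch_{\{\bar Y_s\ne0\}}\|\bar Z_s\|^2\,ds+I_{\sigma-}\big)$ up to a leftover $E\int_0^\sigma|\bar Y_s|^p\,ds$. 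Next, to bound $E\sup_{t<\sigma}|\bar Y_t|^p$, I would take the supremum before the expectation and use Burkholder--Davis--Gundy,
\[
E\sup_{t<\sigma}\Big|\int_0^t|\bar Y_{s-}|^{p-2}\langle\bar Y_{s-},\bar Z_s\,dW_s\rangle\Big|
\le CE\Big(\sup_{s<\sigma}|\bar Y_s|^p\int_0^\sigma|\bar Y_s|^{p-2}\fch_{\{\bar Y_s\ne0\}}\|\bar Z_s\|^2\,ds\Big)^{1/2},
\]
splitting by Young so that $\tfrac12E\sup_{t<\sigma}|\bar Y_t|^p$ is absorbed on the left and the residual $\bar Z$-integral is controlled by the first-stage bound. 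Finally Gronwall's lemma removes $E\int_0^\sigma|\bar Y_s|^p\,ds$ (the factor $e^{CT}$ swallowed into $C$), and a passage $\varepsilon\downarrow0$ via dominated/monotone convergence, with Fatou for $I_{\sigma-}$, completes the proof.

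The main obstacle is the coupling between the martingale term, weighted by $|\bar Y|^{p-1}$, and the second-order term, weighted by the singular (for $p<2$) factor $|\bar Y|^{p-2}$, near the set $\{\bar Y=0\}$; this is precisely what forces the two-stage scheme, estimating the $\bar Z$-integral and $I_{\sigma-}$ first and the supremum afterward, so that the coefficients line up rather than collide in a single absorption. A secondary technical point is to justify the limit $\varepsilon\downarrow0$, ensuring that the indicator $\fch_{\{\bar Y_s\ne0\}}$ and the exact jump quantity $I$ emerge correctly from $\phi_\varepsilon$.
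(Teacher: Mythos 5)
Your proposal is correct and follows essentially the same route as the paper: Itô's formula for $x\mapsto|x|^p$ applied to $\bar Y$, the same decomposition of $\langle\bar Y_{s-},d\bar K_s\rangle$ through $\Pi_{D_{s-}}(Y'_{s-})$ and $\Pi_{D'_{s-}}(Y_{s-})$ combined with Remark \ref{rem2.2}(a) and Definition \ref{def2.3}(b), absorption of the drift via (H3) and Young, and then the two-stage BDG--Gronwall scheme borrowed from the proof of Proposition \ref{prop2.4}. The only cosmetic difference is that you re-derive the convex Itô formula via the smoothing $\phi_\varepsilon(x)=(|x|^2+\varepsilon)^{p/2}$, whereas the paper simply cites it.
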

\begin{proof}
By It\^o's formula for the convex function $x\to|x|^p$  (see
\cite{Kl:BSM}), for any $t<\sigma$ we have
\begin{align*}
&|\bar Y_t|^p+\frac{p(p-1)}{2}\int_t^\sigma|\bar Y_s|^{p-2}{\bf
1}_{\{\bar Y_s\ne 0\}}\|\bar Z_s\|^2\,ds+I_{\sigma-}-I_t\\
&\quad=|\bar Y_{\sigma-}|^p
+p\int_t^{\sigma}|\bar Y_{s}|^{p-1}
\langle \sgn(\bar Y_s),f(s,Y_s,Z_s-f(s,Y'_s,Z'_s)\rangle\,ds\\
&\qquad-p\int_t^{\sigma}|\bar Y_{s}|^{p-1}\langle \sgn(\bar
Y_s),\bar Z_s\,dW_s\rangle +p\int_t^{\sigma-}| \bar
Y_{s-}|^{p-1}\langle\sgn(\bar Y_{s-}),d\bar K_s\rangle.
\end{align*}
Since
\begin{align*}
&p\int_t^{\sigma}|\bar Y_{s}|^{p-1}
\langle \sgn(\bar Y_s),f(s,Y_s,Z_s-f(s,Y'_s,Z'_s)\rangle\,ds\\
&\quad\leq C_1 \int_t^{\sigma}|\bar Y_{s-}|^p\,ds
+\frac{p(p-1)}4\int_t^{\sigma}|\bar  Y_{s-}|^{p-2}{\bf 1}_{\{\bar
Y_s\ne 0\}}\|\bar Z_s\|^2\,ds
\end{align*}
for some $C_1>0$, it follows that
\begin{align}
\label{eq2.09} &|\bar Y^{\sigma-}_t|^p
+\frac{p(p-1)}{4}\int_t^{t\wedge\sigma}|\bar Y_s|^{p-2}{\bf
1}_{\{\bar Y_s\ne 0\}}\|\bar Z_s\|^2\,ds+I_{t\wedge\sigma-}-I_t
\nonumber\\
&\quad\le|\bar Y_{\sigma-}|^p+ C_1p \int_t^{T}|\bar
Y^{\sigma-}_{s-}|^p\,ds -p\int_t^{t\wedge\sigma}|\bar
Y_{s}|^{p-1}\langle \sgn(\bar Y_s),\bar Z_s\,dW_s\rangle\nonumber\\
&\qquad+p\int_t^{t\wedge\sigma-}| \bar
Y_{s-}|^{p-1}\langle\sgn(\bar Y_{s-}),d\bar K_s\rangle
\end{align}
for $t\in[0,T]$. Since $ \int_t^{\sigma}| \bar
Y_{s-}|^{p-1}\langle\sgn(\bar Y_{s-}),d\bar
K_s\rangle=\int_t^{\sigma}| \bar Y_{s-}|^{p-2}{\bf 1}_{\{Y_s\ne
Y'_s\}}\langle \bar Y_{s-},d\bar K_s\rangle$ and
\begin{align}
\label{eq2.8} \nonumber\langle \bar Y_{s-},d\bar K_s\rangle
&=\langle Y_{s-}-\Pi_{D_{s-}}(Y'_{s-}),dK_s\rangle+
\langle Y'_{s-}-\Pi_{D'_{s-}}(Y_{s-}),dK'_s\rangle\\
&\qquad\nonumber+\langle \Pi_{D_{s-}}(Y'_{s-})-Y'_{s-},dK_s\rangle
+\langle \Pi_{D'_{s-}}(Y_{s-})-Y_{s-},dK'_s\rangle\\
&\leq|\Pi_{D_{s-}}(Y'_{s-})-Y'_{s-}|\,d|K|_s
+|\Pi_{D'_{s-}}(Y_{s-})-Y_{s-}|\,d|K'|_s,
\end{align}
we see that $\sup_{t\in[0,T]}\int_t^{t\wedge\sigma-}| \bar
Y_{s-}|^{p-1}\langle\sgn(\bar Y_{s-},d\bar K_s\rangle\leq X_1$,
where
\begin{align*}
X_1&= \int_0^{\sigma-}|\bar Y_{s-}|^{p-2}
|\Pi_{D_{s-}}(Y'_{s-})-Y'_{s-}|{\bf 1}_{\{Y'_{s-}\notin D_{s-}\}}\,d|K|_s\\
&\quad+\int_0^{\sigma-}|\bar
Y_{s-}|^{p-2}|\Pi_{D'_{s-}}(Y_{s-})-Y_{s-}|{\bf 1}_{\{Y_{s-}\notin
D'_{s-}\}}\,d|K'|_s.
\end{align*}
From (\ref{eq2.09}), (\ref{eq2.8}) and the fact that
$\int_0^{t}|\bar Y_{s}|^{p-1}\langle \sgn(\bar Y_s),\bar
Z_s\,dW_s\rangle$ is a uniformly integrable martingale it follows
that
\begin{align}
\label{eq2.9}
\nonumber&\frac{p(p-1)}{4}E\int_t^{t\wedge\sigma}|\bar
Y_s|^{p-2}{\bf 1}_{\{\bar Y_s\ne 0\}}\|\bar Z_s\|^2\,ds
+E(I_{t\wedge\sigma-}-I_t)\\
&\quad\qquad\leq EX+Ep C_1 \int_t^{T}|\bar
Y^{\sigma-}_{s-}|^p\,ds, \quad t\in[0,T],
\end{align}
where $X=|\bar Y_{\sigma-}|^p+X_1$. Arguing as in the proof of
(\ref{eq2.6}) we deduce from the above that there exist constants
$C_5,C_6>0$ such that
\[
E\sup_{s\in[t,T]}|\bar Y_s^{\sigma-}|^p \leq
C_5EX+C_6\int_t^TE(\sup_{u\in[s,T]}|\bar Y^{\sigma-}_u|^p)\,ds,
\quad t\in[0,T].
\]
By Gronwall's lemma, $E(\sup_{t<\sigma}|\bar Y_t|^p)
=E\sup_{t\in[0,T]}|\bar Y_t^{\sigma-}|^p\leq C_5EXe^{C_6T}$.
Putting $t=0$ in (\ref{eq2.9})  completes the proof.
\end{proof}

\begin{corollary}
\label{cor2.6} Under the assumptions of Proposition \ref{prop2.5},
if moreover $D^{\sigma-}=D'^{\sigma-}$ then
\[
E\big(( \sup_{t<\sigma}|\bar Y_t|^p +\int_0^\sigma|\bar
Y_s|^{p-2}{\bf 1}_{\{\bar Y_s\neq0\}}\|\bar Z_s\|^2\,ds
+I_{\sigma-}\big)\leq CE(|\bar Y_{\sigma-}|^p).
\]
\end{corollary}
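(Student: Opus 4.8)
The plan is to read the corollary off Proposition \ref{prop2.5}: under its assumptions the right-hand side of the displayed estimate is the sum of $CE(|\bar Y_{\sigma-}|^p)$ and two penalty integrals, so it suffices to show that the extra hypothesis $D^{\sigma-}=D'^{\sigma-}$ forces both integrals to vanish. Concretely, I would argue that the indicators ${\bf 1}_{\{Y'_{s-}\notin D_{s-}\}}$ and ${\bf 1}_{\{Y_{s-}\notin D'_{s-}\}}$ appearing under $\int_0^{\sigma-}\cdots\,d|K|_s$ and $\int_0^{\sigma-}\cdots\,d|K'|_s$ are identically zero on the range of integration.

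First I would unwind the hypothesis. Using the stopping convention $Y^\tau=\{Y_{t\wedge\tau}\}$ (and its left-limit variant already used in the proof of Proposition \ref{prop2.5}, where $\bar Y^{\sigma-}_t=\bar Y_t$ for $t<\sigma$), the equality $D^{\sigma-}=D'^{\sigma-}$ of the regions stopped just before $\sigma$ means $D_t=D'_t$ for $t<\sigma$, and hence, passing to left limits, $D_{s-}=D'_{s-}$ for every $s\le\sigma$. In particular $D_{s-}=D'_{s-}$ on the whole set $s\in(0,\sigma)$ over which the two penalty integrals are taken.

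Next I would verify the left-limit membership $Y_{s-}\in D_{s-}$ and $Y'_{s-}\in D'_{s-}$. By part (a) of Definition \ref{def2.3} we have $Y_u\in D_u$, so $\dist(Y_u,D_u)=0$ for all $u$; letting $u\uparrow s$ and using that $Y$ is c\`adl\`ag together with the c\`adl\`ag property of $t\mapsto D_t$ in the Hausdorff metric from (H4), the joint continuity of $(x,G)\mapsto\dist(x,G)$ (which satisfies $|\dist(x,G)-\dist(x',G')|\le|x-x'|+\rho(G,G')$) gives $\dist(Y_{s-},D_{s-})=0$, whence $Y_{s-}\in D_{s-}$ because $D_{s-}$ is closed; the same argument yields $Y'_{s-}\in D'_{s-}$. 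Combining this with $D_{s-}=D'_{s-}$ for $s<\sigma$ gives $Y'_{s-}\in D'_{s-}=D_{s-}$ and $Y_{s-}\in D_{s-}=D'_{s-}$, so both indicators vanish on $(0,\sigma)$ and both penalty integrals are $0$. Substituting into the estimate of Proposition \ref{prop2.5} leaves exactly $CE(|\bar Y_{\sigma-}|^p)$, which is the claim.

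The computation is essentially bookkeeping once these two preliminary facts are in place, and the only genuinely delicate point is the left-limit membership $Y_{s-}\in D_{s-}$: it relies on reading (H4) as c\`adl\`ag convergence of the truncated sets $D_t\cap B(0,N)$ in the Hausdorff metric and on choosing $N$ larger than the (pathwise finite) suprema of $|Y|$ and $|Y'|$ on $[0,T]$, so that the truncation is harmless. Pinning down the meaning of the stopped-region hypothesis $D^{\sigma-}=D'^{\sigma-}$ at the time $\sigma$ itself is the other place where a little care is needed, but it is irrelevant to the two integrals, which run only over $(0,\sigma)$.
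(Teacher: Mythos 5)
Your proposal is correct and follows exactly the route the paper intends: the corollary is stated without proof as an immediate consequence of Proposition \ref{prop2.5}, the point being precisely that $D_{s-}=D'_{s-}$ for $s<\sigma$ together with the left-limit memberships $Y_{s-}\in D_{s-}$ and $Y'_{s-}\in D'_{s-}$ kills both indicator terms, so only $CE(|\bar Y_{\sigma-}|^p)$ survives on the right-hand side. Your careful justification of the left-limit membership via the Lipschitz estimate $|\dist(x,G)-\dist(x',G')|\le|x-x'|+\rho(G,G')$ and (H4) is exactly the bookkeeping the authors leave implicit.
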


\begin{remark}
\label{rem2.7}
Since $\Delta \bar Y=-\Delta \bar K$, in the case  $p=2$ we have
\[
I_t=\sum_{s\leq t}|\Delta \bar K_s|^2,\quad t\ge0.
\]
\end{remark}

\nsubsection{Existence and uniqueness of solutions of RBSDEs}
\label{sec3}

Our main goal is to prove that under (H1)--(H4) there exists a
unique solution $(Y,Z,K)$  of (\ref{eq1.1}) such that $Y,K\in{\cal
S}^2$ and  $Z\in{\cal P}^2$. The uniqueness follows easily from
Corollary \ref{cor2.6}. In the proof of the existence we will use
the method of approximation of $\DD$  by discrete time-dependent
process described in the following proposition.

\begin{proposition}
\label{prop3.2} Let
$\sigma_0=0\leq\sigma_1\leq\ldots\leq\sigma_{k+1}=T$ be  stopping
times and let $D^0,D^1,\dots,D^{k}$  be random closed convex
subsets of $\R^m$ with nonempty interiors such that $D^i$ is
$\FF_{\sigma_i}$-measurable. Let $(Y,Z,K)$ be a triple of
$(\FF_t)$-progressively measurable processes such that
\begin{enumerate}
\item[\rm(a)]
$\xi=Y_T\in D^{k}$, $K$ is a c\`adl\`ag process of locally bounded
variation such that $K_0=0$, $\Delta
K_{\sigma_i}=\Pi_{D^{i-1}}(Y_{\sigma_i})-Y_{\sigma_i}=-\Delta
Y_{\sigma_i}$,
\item[\rm(b)]on each interval $[\sigma_{i-1},\sigma_i)$,
$i=1,\dots,{k+1}$, we have
\begin{equation}
\label{eq3.1}
Y_t=\Pi_{D^{i-1}}(Y_{\sigma_i})+\int^{\sigma_i}_tf(s,Y_s,Z_s)\,ds
-\int^{\sigma_i}_tZ_s\,dW_s +K_{\sigma_i-}-K_t,
\end{equation}
where $Y_t\in D^{i-1}$,
\item[\rm(c)]$\int_{(\sigma_{i-1}, \sigma_{i})}\langle
Y_{s-}-X_{s-}\,dK_s\rangle \leq0$ for every $(\FF_t)$-adapted c\`adl\`ag
process $X$ such that $X_t\in D^{i-1}$ for
$t\in[\sigma_{i-1},\sigma_i)$.
\end{enumerate}
Then $(Y,Z,K)$ is a unique solution of \mbox{\rm(\ref{eq1.1})}
with terminal value $\xi$ and $\{D_t;\,t\in[0,T]\}$ such that
$D_t=D^{i-1}$, $t\in[\sigma_{i-1},\sigma_i[$, $i=1,\dots,{k+1}$,
$D_T=D_{T-}$\,.
\end{proposition}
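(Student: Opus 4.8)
The plan is to establish two facts: that the given triple $(Y,Z,K)$ is a solution of (\ref{eq1.1}) in the sense of Definition \ref{def2.3} for the piece-wise constant family $\DD$ described in the statement, and that it is the only such solution in the class $\SSS^2$. For the first I would verify Definition \ref{def2.3}(a)--(c). Part (a) is immediate: on $[\sigma_{i-1},\sigma_i)$ we have $Y_t\in D^{i-1}=D_t$ by hypothesis (b), $Y_{\sigma_i}\in D^i=D_{\sigma_i}$ because $Y_{\sigma_i}$ is the starting value of the next local equation, and $Y_T=\xi\in D^k=D_T$. To obtain Definition \ref{def2.3}(c), i.e. (\ref{eq1.1}), I would splice the local equations (\ref{eq3.1}). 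The key observation is that the jump relation in hypothesis (a) gives $\Pi_{D^{i-1}}(Y_{\sigma_i})=Y_{\sigma_i}+\Delta K_{\sigma_i}$ and $K_{\sigma_i-}=K_{\sigma_i}-\Delta K_{\sigma_i}$, so that (\ref{eq3.1}), after the two occurrences of $\Delta K_{\sigma_i}$ cancel, becomes
\[
Y_t=Y_{\sigma_i}+\int_t^{\sigma_i}f(s,Y_s,Z_s)\,ds-\int_t^{\sigma_i}Z_s\,dW_s+K_{\sigma_i}-K_t,\qquad t\in[\sigma_{i-1},\sigma_i).
\]
Applying this relation successively on $[\sigma_{j-1},\sigma_j)$, $[\sigma_j,\sigma_{j+1})$, \dots\ and telescoping up to $\sigma_{k+1}=T$, where $Y_T=\xi$, produces exactly (\ref{eq1.1}); the convention $D_T=D_{T-}$ together with $\xi\in D^k$ forces $\Delta K_T=0$, so no boundary term survives at $T$.

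For Definition \ref{def2.3}(b) I would fix an adapted c\`adl\`ag process $X$ with $X_t\in D_t$ for all $t$, split the integral over $[0,T]$ into the contributions of the open intervals $(\sigma_{i-1},\sigma_i)$ and of the grid points, and use $\Delta K_0=\Delta K_T=0$ to write
\[
\int_0^T\langle Y_{s-}-X_{s-},dK_s\rangle
=\sum_{i=1}^{k+1}\int_{(\sigma_{i-1},\sigma_i)}\langle Y_{s-}-X_{s-},dK_s\rangle
+\sum_{i=1}^{k}\langle Y_{\sigma_i-}-X_{\sigma_i-},\Delta K_{\sigma_i}\rangle.
\]
On each open interval $X_t\in D_t=D^{i-1}$, so $X$ is admissible in hypothesis (c) and the corresponding integral is $\le0$. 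For the grid points, $X_{\sigma_i-}\in D^{i-1}$ because $D^{i-1}$ is closed and $X_t\in D^{i-1}$ as $t\uparrow\sigma_i$; moreover $Y_{\sigma_i-}=\Pi_{D^{i-1}}(Y_{\sigma_i})$ and $\Delta K_{\sigma_i}=\Pi_{D^{i-1}}(Y_{\sigma_i})-Y_{\sigma_i}$, so Remark \ref{rem2.2}(c) identifies $\Delta K_{\sigma_i}/|\Delta K_{\sigma_i}|$ as an inward normal at $Y_{\sigma_i-}$ and Remark \ref{rem2.2}(a) gives $\langle Y_{\sigma_i-}-X_{\sigma_i-},\Delta K_{\sigma_i}\rangle\le0$. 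Hence the whole integral is $\le0$, which is Definition \ref{def2.3}(b).

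Uniqueness is the easier half. Given another solution $(Y',Z',K')$ with $Y'\in\SSS^2$ (the constructed $Y$ lies in $\SSS^2$, being pieced together from finitely many $\SSS^2$ local solutions), I would invoke Corollary \ref{cor2.6} with $\DD'=\DD$, $\xi'=\xi$ and $\sigma=T$. Since $D_{s-}=D'_{s-}$ and the left limits $Y_{s-},Y'_{s-}$ lie in the closed set $D_{s-}$, the two correction terms of Proposition \ref{prop2.5} vanish, while $\bar Y_{T-}=\xi-\xi=0$ because there is no jump at $T$; thus $E\sup_{t<T}|\bar Y_t|^p=0$, which forces $Y=Y'$, and then $\bar Z\equiv0$ and $\bar K\equiv0$ follow. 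I expect the one genuinely delicate step to be Definition \ref{def2.3}(b) at the jump times: one must recognise that $\Delta K_{\sigma_i}$ is directed along the inward normal to $D^{i-1}$ at the left limit $Y_{\sigma_i-}=\Pi_{D^{i-1}}(Y_{\sigma_i})$, and that the left limit $X_{\sigma_i-}$ of the test process still belongs to $D^{i-1}$, so that Remark \ref{rem2.2}(a) can be applied. By contrast, the telescoping for (\ref{eq1.1}) is routine once the bookkeeping $K_{\sigma_i-}=K_{\sigma_i}-\Delta K_{\sigma_i}$ is in place.
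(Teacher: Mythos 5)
Your proposal is correct and follows essentially the same route as the paper: the identity $Y_{\sigma_i}+K_{\sigma_i}=\Pi_{D^{i-1}}(Y_{\sigma_i})+K_{\sigma_i-}$ to splice the local equations into (\ref{eq1.1}), the splitting of $\int_0^T\langle Y_{s-}-X_{s-},dK_s\rangle$ into the open intervals (handled by hypothesis (c)) plus the grid points (handled by $Y_{\sigma_i-}=\Pi_{D^{i-1}}(Y_{\sigma_i})$, $X_{\sigma_i-}\in D^{i-1}$ and the projection inequality of Remark \ref{rem2.2}), and uniqueness via Corollary \ref{cor2.6}. The only cosmetic difference is that you spell out the telescoping and the uniqueness step, which the paper leaves implicit.
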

\begin{proof}
Note that
$Y_{\sigma_i}+K_{\sigma_i}=\Pi_{D^{i-1}}(Y_{\sigma_i})+K_{\sigma_i-}$,
$i=1,\dots,k$, so  $(Y, Z,K)$ satisfies (\ref{eq1.1}). Since
$Y_t\in D_t$, we only have to check condition (b) of the
definition of a solution of (\ref{eq1.1}). Let $X$ be an
$(\FF_t)$-adapted c\`adl\`ag process such that  $X_t\in D_t$,
$t\in[0,T]$. Clearly
\[
\sum_{i=1}^{k+1}\int_{(\sigma_{i-1},\sigma_i)}\langle
Y_{s-}-X_{s-}\,,dK_s\rangle \leq0.
\]
On the other hand,
\[
Y_{\sigma_i-}=Y_{\sigma_i}-\Delta Y_{\sigma_i}
=Y_{\sigma_i}+\Delta
K_{\sigma_i}=\Pi_{D_{i-1}}(Y_{\sigma_i}),\quad i=1,\dots,k.
\]
Since $X_{\sigma_i-}\in D_{\sigma_i-}$\,, from  Remark 2.1(a) it
follows that
\begin{align*}
\langle Y_{\sigma_i-}-X_{\sigma_i-},\Delta K_{\sigma_i}\rangle
&=\langle  Y_{\sigma_i-}-X_{\sigma_i-},\Pi_{D_{\sigma_i-}}(Y_{\sigma_i})
-Y_{\sigma_i}\rangle\\
&=\langle
\Pi_{D_{\sigma_i-}}(Y_{\sigma_i})-X_{\sigma_i-},
\Pi_{D_{\sigma_i-}}(Y_{\sigma_i})-Y_{\sigma_i}\rangle\leq0,
\end{align*}
which completes the proof.
\end{proof}
\medskip

Now we are going to study the problem of existence of solutions of
(\ref{eq3.1}). To this end, we first consider local RBSDEs on
closed random intervals.

Let $\tau,\sigma$ be stopping times such that
$0\leq\tau\leq\sigma\leq T$, $D$ be an $\FF_\tau$-measurable
random convex set with nonempty interior and let $\zeta\in L^2$ be
an $\FF_\sigma$-measurable random variable.  We consider equations
of the form
\begin{equation}
\label{eq3.2} Y_t=\zeta+\int^{\sigma}_tf(s,Y_s,Z_s)\,ds
-\int^{\sigma}_tZ_s\,dW_s +K_{\sigma}-K_t,\quad t\in[\tau,\sigma].
\end{equation}

\begin{definition}
\label{def3.2}
We say that  a triple $(Y, Z,K-K_\tau)$  of
$(\FF_t)$-progressively measurable  processes on $[\tau,\sigma]$
is a solution of the local RBSDE  on $[\tau,\sigma]$ if it satisfies
(\ref{eq3.2})  and
\begin{enumerate}
\item[(a)] $Y_t\in D$, $t\in[\tau,\sigma]$,
\item[(b)]$K_{\tau}=0$, $K$ is a c\`adl\`ag process of locally bounded
variation on the interval $[\tau,\sigma]$ such that
$\int_\tau^\sigma\langle Y_{s-}-X_{s-},dK_s\rangle\leq 0$ for
every $(\FF_t)$-adapted c\`adl\`ag process $X$ with values in $D$.
\end{enumerate}
\end{definition}

We will assume that
\begin{enumerate}
\item[(H1${}^*$)]$\zeta\in D$, $\zeta\in L^2$,
\item[(H2${}^*$)]$E\int_\tau^\sigma|f(s,0,0)|^2\,ds<\infty$,
\item[(H3${}^*$)]There are $\mu,\lambda\ge0$ such that for any
$t\in[\tau,\sigma]$,
\[
|f(t,y,z)-f(t,y,z')|\le\mu|y-y'|+\lambda\|z-z'\|,\quad
\,y,y'\in\R^m,\, z,z'\in\R^{m\times d},
\]
\item[(H4${}^*$)]There is an $\FF_\tau$-measurable random variable
$A\in L^2$ such that $A\in\mbox{\rm Int}D$.
\end{enumerate}

\begin{proposition}
\label{cor3.4} Assume \mbox{\rm(H1${}^*$)--(H4${}^*$)}. If
$(Y,Z,K)$ is a solution of \mbox{\rm(\ref{eq3.2})} such that
$\sup_{\tau\leq t\leq\sigma}|Y_t|\in L^2$ then there exists $C>0$
depending only on $\mu,\lambda, T$ such that
\[
E\big(\sup_{\tau\leq t\leq\sigma}|Y_t|^2
+\int_\tau^{\sigma}\|Z_s\|^2\,ds\,|\,\FF_{\tau}\big) \le
CE\Big(|\zeta|^2+|A|^2
+\int_\tau^\sigma|f(s,0,0)|^2\,ds\,|\,\FF_{\tau}\Big)
\]
and
\[
E\big(|K|_\tau^\sigma\,|\,\FF_\tau\big) \le  C(\dist(A,\partial
D))^{-1}E\Big(|\zeta|^2+|A|^2
+\int_\tau^{\sigma}|f(s,0,0)|^2\,ds\,|\,\FF_{\tau}\Big).
\]
\end{proposition}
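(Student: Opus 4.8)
The plan is to mimic the proof of Proposition \ref{prop2.4}, carrying out every estimate conditionally on $\FF_\tau$ and exploiting the decisive simplification that here the interior point $A$ is a time-independent $\FF_\tau$-measurable random vector rather than a genuine semimartingale. Since $A$ is constant in time, $dA_s=0$ and $\Delta A_s=0$ on $[\tau,\sigma]$, so the two troublesome terms $-\int\langle K_{s-},dA_s\rangle$ and $-\sum\langle\Delta K_s,\Delta A_s\rangle$ appearing in (\ref{eq2.02}) simply vanish, and the reflection potential $\int\dist(A_{s-},\partial D_{s-})\,d|K|_s$ collapses to the clean product $\dist(A,\partial D)\,|K|_\tau^\sigma$; it is exactly this product that produces the factor $(\dist(A,\partial D))^{-1}$ in the second estimate.

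Concretely, first I would localize by $\tau_n=\inf\{t>\tau:\int_\tau^t\|Z_s\|^2\,ds>n\}\wedge\sigma$ and apply It\^o's formula to $|Y_t|^2$ on $[\tau,\sigma\wedge\tau_n]$, as in Proposition \ref{prop2.4}; the terminal contribution $|Y_{\sigma\wedge\tau_n}|^2$ will supply the $|\zeta|^2$ term in the limit. The generator term is controlled by (H3${}^*$) together with Young's inequality, $2\langle Y_s,f(s,Y_s,Z_s)\rangle\le C_1(|Y_s|^2+|f(s,0,0)|^2)+\frac12\|Z_s\|^2$, so that half of $\int\|Z_s\|^2\,ds$ is absorbed on the left. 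For the reflection term I would split $\langle Y_{s-},dK_s\rangle=\langle Y_{s-}-A,dK_s\rangle+\langle A,dK_s\rangle$ and use Remark \ref{rem2.2}(b) (with $dK_s=\mathbf{n}_{Y_s}\,d|K|_s$ and $A\in\mbox{\rm Int}\,D$) to get $\int_\tau^{\sigma\wedge\tau_n}\langle Y_{s-}-A,dK_s\rangle\le-\dist(A,\partial D)\,|K|_\tau^{\sigma\wedge\tau_n}$, while $\int_\tau^{\sigma\wedge\tau_n}\langle A,dK_s\rangle=\langle A,K_{\sigma\wedge\tau_n}\rangle\le|A|\,K^*$ because $A$ is constant. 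From the equation itself, $K_t=-(Y_t-Y_\tau)-\int_\tau^t f\,ds+\int_\tau^t Z\,dW$, I would bound $K^*$ pathwise by $\sup|Y|$, $\int|f(s,0,0)|\,ds$, $\lambda\int\|Z\|\,ds$ and $\sup|\int Z\,dW|$, exactly as in (\ref{eq2.3}).

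Next I would take $E(\,\cdot\mid\FF_\tau)$. The stochastic integral $\int_\tau^{\cdot\wedge\tau_n}\langle Y_s,Z_s\,dW_s\rangle$ is a martingale vanishing at $\tau$, so its $\FF_\tau$-conditional expectation is zero, and the conditional Burkholder--Davis--Gundy inequality controls $\sup|\int Z\,dW|$ by $(\int\|Z\|^2\,ds)^{1/2}$. Letting $n\to\infty$ (conditional Fatou and monotone convergence) yields the conditional analogue of (\ref{eq2.1}): $\int\|Z\|^2\,ds$, $(K^*)^2$ and $\dist(A,\partial D)\,|K|_\tau^\sigma$ are all bounded by $CE(\sup|Y|^2+|A|^2+\int|f(s,0,0)|^2\,ds\mid\FF_\tau)$. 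Finally, to close the loop and remove $\sup|Y|^2$ from the right, I would estimate $E(\sup_{t\in[\tau,\sigma]}|Y_t|^2\mid\FF_\tau)$ by repeating the computation of (\ref{eq2.6}) conditionally---conditional BDG followed by conditional Gronwall---which gives the first asserted bound; substituting it back into the reflection inequality and dividing by $\dist(A,\partial D)$, which is strictly positive and $\FF_\tau$-measurable by (H4${}^*$), delivers the stated $K$-estimate. The main obstacle is the bookkeeping in the conditional setting: one must justify the conditional BDG and Gronwall steps and the passage $n\to\infty$, and carefully manage the feedback between the $Y$-, $Z$- and $K$-bounds (the $K$-estimate needs $\sup|Y|^2$, whose control in turn uses the reflection term). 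No genuinely new idea beyond Proposition \ref{prop2.4} is required, only the observation that a constant $A$ trivializes the semimartingale corrections.
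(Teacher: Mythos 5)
Your proposal is correct and follows essentially the same route as the paper, whose own proof consists of the single remark that one should repeat the argument of Proposition \ref{prop2.4} conditionally on $\FF_\tau$ and use that $\dist(A,\partial D)$ is a strictly positive $\FF_\tau$-measurable random variable. Your additional observations---that the constancy of $A$ annihilates the $dA_s$ and $\Delta A_s$ correction terms, that the reflection potential collapses to $\dist(A,\partial D)\,|K|_\tau^\sigma$, and that the conditional BDG and Gronwall steps need care---are exactly the points the paper leaves implicit.
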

\begin{proof}
It is sufficient to apply arguments from the proof of Proposition
\ref{prop2.4} and use the fact that  $\dist(A,\partial D)$  is a
strictly positive  $\FF_\tau$-measurable random variable.
\end{proof}
\medskip

Let $D'$ be an $\FF_\tau$-measurable random convex set with
nonempty interior, $\zeta'\in L^2$ be an $\FF_\sigma$-measurable
random variable such that $\zeta'\in D'$ $P$-a.s. and there is an
$\FF_\tau$-measurable random variable $A'\in L^2$ such that
$A'\in\mbox{\rm Int} D'$. Consider the local RBSDE on
$[\tau,\sigma]$ of the form
\begin{equation}
\label{eq3.3} Y'_t=\zeta'+\int^{\sigma}_tf(s,Y'_s,Z'_s)\,ds
-\int^{\sigma}_tZ'_s\,dW_s +K'_{\sigma}-K'_t,\quad
t\in[\tau,\sigma].
\end{equation}

\begin{proposition}
\label{cor3.5} Let $(Y,Z,K)$ and $(Y',Z',K')$ be solutions of
\mbox{\rm(\ref{eq3.2})} and \mbox{\rm(\ref{eq3.3})}, respectively.
If $f$ satisfies \mbox{\rm(H3${}^*$)} and $\sup_{\tau\leq
t\leq\sigma}|Y_t|$, $\sup_{\tau\leq t\leq\sigma}|Y'_t| \in L^2$
then there exists $C>0$ depending only on $\mu,\lambda,T$ such
that
\begin{align*}
&E\big(\sup_{\tau\leq t\leq\sigma}|Y_t-Y'_t|^2
+\int_\tau^\sigma\|Z_s-Z'_s\|^2\,ds\,|\,\FF_{\tau}\big)\\
&\qquad\le C\Big(E(|\zeta-\zeta'|^2\,|\,\FF_{\tau})
+E(\sup_{\tau<t\leq\sigma}|\Pi_D
(Y'_{t-})-Y'_{t-}||K|_\tau^{\sigma}\,|\,\FF_{\tau})\\
&\qquad\qquad\quad+E(\sup_{\tau< t\leq\sigma}
|\Pi_{D'}(Y_{t-})-Y_{t-}||K'|_\tau^{\sigma}\,|\,\FF_{\tau})\Big).
\end{align*}
\end{proposition}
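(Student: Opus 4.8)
The plan is to mirror the proof of Proposition \ref{prop2.5} with $p=2$, carrying everything out conditionally on $\FF_\tau$ over the random interval $[\tau,\sigma]$. Put $\bar Y=Y-Y'$, $\bar Z=Z-Z'$, $\bar K=K-K'$ and $\bar\zeta=\zeta-\zeta'$. First I would apply It\^o's formula to $|\bar Y_t|^2$ on $[\tau,\sigma]$; by Remark \ref{rem2.7} the purely jump contribution equals $\sum_{t<s\le\sigma}|\Delta\bar K_s|^2\ge0$ and may be discarded, giving for $t\in[\tau,\sigma]$
\begin{align*}
|\bar Y_t|^2+\int_t^\sigma\|\bar Z_s\|^2\,ds
&\le|\bar\zeta|^2+2\int_t^\sigma\langle\bar Y_s,f(s,Y_s,Z_s)-f(s,Y'_s,Z'_s)\rangle\,ds\\
&\quad-2\int_t^\sigma\langle\bar Y_s,\bar Z_s\,dW_s\rangle
+2\int_t^\sigma\langle\bar Y_{s-},d\bar K_s\rangle.
\end{align*}

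The heart of the argument is the estimate of the reflection term, for which I would reproduce decomposition (\ref{eq2.8}) with the fixed random sets $D,D'$ in place of $D_{s-},D'_{s-}$. The decisive sign argument is that $X=\Pi_D(Y')$ is an admissible test process in condition (b) of Definition \ref{def3.2}: since $D$ is $\FF_\tau$-measurable and projection onto a fixed convex set is $1$-Lipschitz, $\Pi_D(Y'_s)$ is $\sigma(Y'_s,D)$-measurable, hence $(\FF_t)$-adapted and c\`adl\`ag, with values in $D$. Condition (b) therefore gives $\int_\tau^\sigma\langle Y_{s-}-\Pi_D(Y'_{s-}),dK_s\rangle\le0$, and symmetrically $\int_\tau^\sigma\langle Y'_{s-}-\Pi_{D'}(Y_{s-}),dK'_s\rangle\le0$ on taking $X=\Pi_{D'}(Y)$ in (b) for $(Y',Z',K')$. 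The two remaining terms in the decomposition are controlled by Cauchy--Schwarz and $|dK_s|\le d|K|_s$, so that
\[
\int_\tau^\sigma\langle\bar Y_{s-},d\bar K_s\rangle
\le\sup_{\tau<t\le\sigma}|\Pi_D(Y'_{t-})-Y'_{t-}|\,|K|_\tau^\sigma
+\sup_{\tau<t\le\sigma}|\Pi_{D'}(Y_{t-})-Y_{t-}|\,|K'|_\tau^\sigma=:X_1.
\]

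Next I would bound the generator difference by (H3${}^*$), using $2\lambda|\bar Y_s|\|\bar Z_s\|\le2\lambda^2|\bar Y_s|^2+\tfrac12\|\bar Z_s\|^2$ to absorb half of the $\|\bar Z\|^2$-integral into the left-hand side, leaving a term $(2\mu+2\lambda^2)\int_t^\sigma|\bar Y_s|^2\,ds$. Writing $X=|\bar\zeta|^2+2X_1$ and taking $E(\,\cdot\,|\,\FF_\tau)$, the conditional expectation of $\int_t^{\cdot}\langle\bar Y_s,\bar Z_s\,dW_s\rangle$ vanishes because it is a uniformly integrable martingale, and the hypotheses $\sup|Y|,\sup|Y'|\in L^2$ together with Proposition \ref{cor3.4} (which controls $|K|_\tau^\sigma$ and $|K'|_\tau^\sigma$) guarantee finiteness of all the conditional expectations involved. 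One then arrives at a conditional analogue of the estimates leading to (\ref{eq2.6}); a conditional Burkholder--Davis--Gundy inequality applied to the martingale part, exactly as in the chain (\ref{eq2.6}), lets me pass from $E(\int_t^\sigma\|\bar Z_s\|^2\,ds\,|\,\FF_\tau)$ to $E(\sup_{s\in[t,\sigma]}|\bar Y_s|^2\,|\,\FF_\tau)$, after which the conditional Gronwall lemma closes the estimate, yielding the asserted bound with $E(X\,|\,\FF_\tau)$ on the right.

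I expect the main obstacle to be the justification of the sign argument, namely verifying that $\Pi_D(Y')$ and $\Pi_{D'}(Y)$ are genuine test processes in Definition \ref{def3.2}(b)—in particular checking adaptedness and the c\`adl\`ag property despite the randomness of $D$ and $D'$. A secondary, more technical point is the careful bookkeeping of the conditional BDG and Gronwall steps on the random interval $[\tau,\sigma]$, ensuring the self-improving absorption of the $E(\sup|\bar Y|^2\,|\,\FF_\tau)$ term is legitimate under the given integrability.
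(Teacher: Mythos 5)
Your proposal is correct and follows essentially the same route as the paper, which proves Proposition \ref{cor3.5} simply by invoking the argument of Proposition \ref{prop2.5} with $p=2$ (It\^o's formula for $|\bar Y|^2$, the decomposition (\ref{eq2.8}) of $\langle\bar Y_{s-},d\bar K_s\rangle$ using $\Pi_D(Y')$ and $\Pi_{D'}(Y)$ as test processes in Definition \ref{def3.2}(b), absorption of the $\|\bar Z\|^2$ term, and the BDG--Gronwall chain as in (\ref{eq2.6})), carried out conditionally on $\FF_\tau$ over $[\tau,\sigma]$. Your explicit verification that $\Pi_D(Y')$ is an admissible adapted c\`adl\`ag test process (via $\FF_\tau$-measurability of $D$ and $1$-Lipschitz continuity of the projection) is exactly the point the paper leaves implicit.
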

\begin{proof}
We apply the arguments from the proof of Proposition \ref{prop2.5}
with $p=2$.
\end{proof}
\medskip

We will need the following assumption: there exists $N\in\N$ such
that
\begin{equation}
\label{eq3.4} D\subset B(0,N).
\end{equation}

\begin{proposition}
\label{prop3.6} Assume \mbox{\rm(H1${}^*$)--(H4${}^*$)} and
\mbox{\rm(\ref{eq3.4})}. Then there exists a unique solution
$(Y,Z,K)$ of the local RBSDE \mbox{\rm(\ref{eq3.2})} such that
\begin{equation}
\label{eq3.05}
\sup_{\tau\leq t\leq \sigma}|Y_t|\in
L^2,\quad\int_\tau^\sigma\|Z_s\|^2\,ds\in L^1, \quad\sup_{\tau\leq
t\leq \sigma} |K_t-K_{\tau}|\in L^2.
\end{equation}
\end{proposition}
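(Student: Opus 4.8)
The plan is to read off uniqueness from the stability estimate of Proposition \ref{cor3.5} and to construct a solution by a penalization scheme controlled by the a priori bounds of Proposition \ref{cor3.4}. Uniqueness is immediate: if $(Y,Z,K)$ and $(Y',Z',K')$ are two solutions of \eqref{eq3.2} satisfying \eqref{eq3.05}, then $Y'_{s-}\in D$ for every $s$ (as $Y'$ is c\`adl\`ag with values in the closed set $D$), so $\Pi_D(Y'_{s-})=Y'_{s-}$ and the indicator $\mathbf 1_{\{Y'_{s-}\notin D\}}$ vanishes, and symmetrically for the term involving $Y_{s-}$. Hence the whole right-hand side in Proposition \ref{cor3.5} reduces to $CE(|\zeta-\zeta'|^2\,|\,\FF_\tau)=0$, which forces $\bar Y=0$, $\bar Z=0$, and then $\bar K=0$.

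For existence I would penalize. Since $\Pi_D$ is nonexpansive, $y\mapsto y-\Pi_D(y)$ is Lipschitz, so for each $n$ the equation
\[
Y^n_t=\zeta+\int_t^\sigma f(s,Y^n_s,Z^n_s)\,ds-n\int_t^\sigma(Y^n_s-\Pi_D(Y^n_s))\,ds-\int_t^\sigma Z^n_s\,dW_s,\quad t\in[\tau,\sigma],
\]
has a Lipschitz generator and, by the classical theory of BSDEs, a unique solution $(Y^n,Z^n)$ with $\sup_{\tau\le t\le\sigma}|Y^n_t|\in L^2$ and $\int_\tau^\sigma\|Z^n_s\|^2\,ds\in L^1$. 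Writing $a^n_s=Y^n_s-\Pi_D(Y^n_s)$ and $K^n_t=-n\int_\tau^t a^n_s\,ds$, the process $K^n$ is continuous, of bounded variation, with $d|K^n|_s=n|a^n_s|\,ds$. Applying It\^o's formula to $|Y^n_t-A|^2$ and using Remark \ref{rem2.2}(c) in the form $\langle Y^n_s-A,a^n_s\rangle\ge|a^n_s|^2+\dist(A,\partial D)|a^n_s|$, the reflecting term appears with the good sign, and exactly as in the proof of Proposition \ref{cor3.4} (here $A\in L^2$ and $\dist(A,\partial D)>0$ enter) I obtain, after the use of (H3${}^*$), Young's inequality and Remark \ref{rem2.1}, a bound
\[
E\big(\sup_{\tau\le t\le\sigma}|Y^n_t|^2+\int_\tau^\sigma\|Z^n_s\|^2\,ds+(|K^n|_\tau^\sigma)^2+n\int_\tau^\sigma|a^n_s|^2\,ds\big)\le C
\]
with $C$ independent of $n$. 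In particular $E\int_\tau^\sigma|a^n_s|^2\,ds\le C/n$, so $\dist(Y^n_s,D)=|a^n_s|\to0$ in $L^2(dt\otimes dP)$.

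The delicate step, which I expect to be the main obstacle, is the passage to the limit, since the reflecting terms obstruct a direct Cauchy estimate for $\{(Y^n,Z^n)\}$: pairing $|Y^n-Y^m|^2$ produces a cross term of the type $\tfrac{n-m}{2}(|a^m|^2-|a^n|^2)$ that is not controlled uniformly. I would therefore first extract, from the uniform bounds, a subsequence along which $Z^n$ converges weakly in $\PP^2$, $Y^n$ weakly in $L^2(dt\otimes dP)$ to some $Y$, and $K^n$ to some $K$; since $|a^n|\to0$ in $L^2(dt\otimes dP)$, the limit satisfies $Y_s\in D$ for a.e.\ $s$, hence for all $s$ by right continuity. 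To identify $(Y,Z,K)$ as a solution I would pass to the limit in the equation and check condition (b) of Definition \ref{def3.2}: for admissible $X$, Remark \ref{rem2.2}(a) gives $\langle Y^n_s-X_s,a^n_s\rangle\ge|a^n_s|^2$, whence $\int_\tau^\sigma\langle Y^n_s-X_s,dK^n_s\rangle=-n\int_\tau^\sigma\langle Y^n_s-X_s,a^n_s\rangle\,ds\le0$, an inequality preserved in the limit by lower semicontinuity.

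Finally, to upgrade to strong convergence and to secure \eqref{eq3.05}, I would compare each $Y^n$ with the limit solution. Pairing as above, using $Y_s\in D$ and the reflection inequality for $K$, one gets $\langle Y^n_s-Y_s,dK^n_s-dK_s\rangle\le-n|a^n_s|^2\,ds+|a^n_s|\,d|K|_s$, and hence, after (H3${}^*$), Young's inequality, Gronwall's lemma and Remark \ref{rem2.1},
\[
E\big(\sup_{\tau\le t\le\sigma}|Y^n_t-Y_t|^2+\int_\tau^\sigma\|Z^n_s-Z_s\|^2\,ds\big)\le CE\int_\tau^\sigma|a^n_s|\,d|K|_s.
\]
The right-hand side tends to $0$ by dominated convergence, using $\dist(Y^n_s,D)\to0$ along the subsequence together with the uniform $L^2$ bounds on $\sup_{\tau\le t\le\sigma}|Y^n_t|$ and $|K|_\tau^\sigma$; uniqueness then identifies the limit and forces the whole sequence to converge. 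This yields a solution with the integrability \eqref{eq3.05}. The crux of the whole argument is precisely the estimation of $E\int_\tau^\sigma|a^n_s|\,d|K|_s$, i.e.\ the control of the singular reflecting measure $d|K|$ against $\dist(Y^n,D)$.
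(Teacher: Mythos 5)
Your uniqueness argument coincides with the paper's. For existence, however, you take a genuinely different route, and it contains a real gap. The paper does not penalize in this proposition at all: it invokes the deterministic-domain existence theorem of Gegout-Petit and Pardoux (\cite[Theorem 5.9]{GP}) for a fixed convex set $G$, extends that solution trivially to the random interval $[\tau,\sigma]$, and then handles the randomness of the $\FF_\tau$-measurable set $D$ by approximating it with a countable dense family of deterministic polyhedrons $G_i\subset B(0,N)$, patching the corresponding solutions together on $\FF_\tau$-measurable events $C^j_i$ (which is legitimate precisely because $D$ is $\FF_\tau$-measurable), and proving the patched sequence is Cauchy in $\SSS\times\PP\times\SSS$ via Propositions \ref{cor3.4} and \ref{cor3.5}. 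You instead re-prove the penalization convergence from scratch, which amounts to re-deriving a local version of \cite{GP}.

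The gap sits exactly at the point you flag as the crux. You propose to show $E\int_\tau^\sigma|a^n_s|\,d|K|_s\to0$ by dominated convergence from $\dist(Y^n_s,D)=|a^n_s|\to0$ in $L^2(dt\otimes dP)$. This does not work: the measure $d|K|_s$ is carried by $\{s:\,Y_s\in\partial D\}$ and is in general singular with respect to Lebesgue measure in $s$ (it is of local-time type), so $dt\otimes dP$-a.e.\ convergence of $a^n$ gives no information about its $d|K|$-integral. What is actually needed is control of $\sup_{\tau\le s\le\sigma}\dist(Y^n_s,D)$ together with the bound on $|K|_\tau^\sigma$, and establishing that uniform-in-time convergence is precisely the hard step of the penalization proofs in \cite{GP,EKPPQ}; the a priori estimate $nE\int_\tau^\sigma|a^n_s|^2\,ds\le C$ only yields the time-averaged statement. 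Two secondary problems compound this. First, your order of argument is circular: to identify the weak limit as a solution you must pass to the limit in $\int f(s,Y^n_s,Z^n_s)\,ds$, which for a Lipschitz (nonlinear) $f$ requires strong $L^2$ convergence of $Z^n$ — the very thing you intend to deduce afterwards from the comparison with the limit solution; moreover a weak $L^2(dt\otimes dP)$ limit of $Y^n$ need not have a c\`adl\`ag, $D$-valued version, so ``for all $s$ by right continuity'' is not available at that stage. Second, the claimed uniform bound $E\bigl((|K^n|_\tau^\sigma)^2\bigr)\le C$ overstates what (H4${}^*$) gives: since $\dist(A,\partial D)$ is only a.s.\ positive and $\FF_\tau$-measurable, the It\^o argument yields $E\bigl(|K^n|_\tau^\sigma\,|\,\FF_\tau\bigr)\le C\,(\dist(A,\partial D))^{-1}(\cdots)$, i.e.\ boundedness in probability (this is exactly Corollary \ref{cor4.4}), not a square-integrable bound uniform in $n$.
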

\begin{proof}
The uniqueness follows from Proposition \ref{cor3.5}. To prove the
existence we first assume that $D$ is nonrandom, i.e. $D=G$, where
$G$ is some fixed convex set with nonempty interior. Set
$g(s,\cdot,\cdot)=f(s,\cdot,\cdot){\bf 1}_{[0,\sigma[}(s)$. By
\cite[Theorem 5.9]{GP} there exists a solution $(Y,Z,K)$ of the
following RBSDE in $G$
\begin{equation}
\label{eq3d1} Y_t=\zeta+\int_t^Tg(s,Y_s,Z_s)\,ds
-\int_t^TZ_s\,dW_s+K_T-K_t,\quad t\in[0,T].
\end{equation}
Since  $Y_t=\zeta$, $Z_t=0$  and $K_T=K_t$ for $t\geq\sigma$, it
is clear that for any $\tau\leq\sigma$ the triple
$(Y,Z,K-K_{\tau})$ is also a solution of the local RBSDE on
$[\tau,\sigma]$. It is well known that in the space $Conv\cap
B(0,N)$ there exists a countable dense set $\{G_1,G_2,\dots\}$ of
convex polyhedrons such that $G_i\subset B(0,N)$, $i\in\N$. By
what has already been proved for each $i\in\N$ there exists a
solution $(Y_i,Z_i,K_i)$  of the local RBSDE in $G_i$ with
terminal value $\zeta_i=\Pi_{G_i}(\zeta)$. Set
$C_1^j=\{\rho(G_1,D)\leq 1/j\}$ and
\[
C^j_i=\{\rho(G_i,D)\leq1/j,\,\rho(G_1,D)>1/j,\dots,
\rho(G_{i-1}^j,D)>1/j\},\quad i=2,3,\dots
\]
Furthermore, for  $j\in\N$ set
\[
\zeta^j=\sum_{i=1}^\infty\Pi_{G_i}(\zeta){\bf 1}_{C_i^j},
\quad  D^j=\sum_{i=1}^\infty{G_i}{\bf 1}_{C_i^j}.
\]
Since  $C_i^j\in\FF_\tau$ for $i\in\N$,
$(Y^j,Z^j,K^j)=\sum_{i=1}^\infty (Y_i,Z_i,K_i){\bf 1}_{C^j_i}$ is
a solution of the local RBSDE in $D^j$ and  terminal value
$\zeta^j$.  Set
\[
A^j=\left\{
\begin{array}{ll}
A,&\mbox{\rm if }\dist(A,\partial D)>1/j, \\
a_i\in\mbox{\rm Int} G_i, &\mbox{\rm if }
\dist(A,\partial D)\leq 1/j\,\,\mbox{\rm and}\,\,D^j=G_i,\,i\in\N,
\end{array}
\right.
\]
and observe that $|\zeta^j|\leq N$ and $|A^j|\leq N$, $j\in\N$.
Therefore, by Proposition \ref{cor3.4}, for any $j\in\N$,
\[
E\big(\sup_{\tau\leq t\leq\sigma}|Y^j_t|^2
+\int_\tau^{\sigma}\|Z^j_s\|^2\,ds\,\FF_{\tau}\big) \leq C
\Big(N^2+\int_\tau^\sigma|f(s,0,0)|^2\,ds\,|\,\FF_{\tau}\Big)
\]
and
\[
E\big(|K^j|_\tau^\sigma\,|\,\FF_\tau\big) \le C(\dist(A^j,\partial
D^j))^{-1} \Big(N^2
+E\int_\tau^\sigma|f(s,0,0)|^2\,ds\,|\,\FF_{\tau})\Big).
\]
Since $P(\dist(A,\partial D)>1/j)\uparrow1$ and
$\dist(A^j,\partial D^j)>\dist(A,\partial D)-1/j$ if
$\dist(A,\partial D)>1/j$, it follows that
\begin{equation}
\label{eq3.5}
\{E\big(|K^j|_\tau^\sigma\,|\,\FF_\tau\big);\,{j\in\N}\}
\quad\mbox{\rm is bounded in probability}.
\end{equation}
By Proposition \ref{cor3.5}, for any $j,k\in\N$ we have
\begin{align}
\label{eq3.06}
&E\big(\sup_{\tau\leq t\leq\sigma}|Y^j_t-Y^{j+k}_t|^2
+\int_\tau^\sigma\|Z^j_s-Z^{j+k}_s\|^2\,ds\,|\,\FF_\tau)\nonumber\\
&\qquad\le C\big(E(|\zeta^j-\zeta^{j+k}|^2\,|\,\FF_{\tau})
+\rho(D^j,D^{j+k})E(|K^j|_\tau^\sigma
+|K^{j+k}|_\tau^\sigma\,|\,\FF_{\tau})\big).
\end{align}
By the construction, $|\zeta^j-\zeta^{j+k}|\leq2/j$ and
$\rho(D^j,D^{j+k})\leq2/j$ for $k\in\N$ . Therefore from
(\ref{eq3.5}) and (\ref{eq3.06}) it follows that
$\{(Y^j,Z^j,K^j)\}$ is a Cauchy sequence on $[\tau,\sigma]$ in the
space ${\cal S}\times{\cal P}\times{\cal S}$. By using standard
methods we show that its limit $(Y,Z,K)$ is a solution of the
local RBSDE (\ref{eq3.2}).
\end{proof}
\medskip

Let us remark that in fact assumption (\ref{eq3.4}) in
Proposition \ref{prop3.6} is superfluous (see Remark
\ref{rem3.8}).

\begin{lemma}
\label{lem3.7} Let $\{G_t;\,t\in[0,T]\}$ be a family of bounded
closed subsets of $\R^m$  such that $t\mapsto G_t$ is c\`adl\`ag
with respect to the Hausdorff metric $\rho$ and $G_T=G_{T-}$.
Define its discretization $\{G^j_t;\,t\in[0,T]\}$ by putting
$G^j_t=G_{t^j_{i-1}}$, $t\in[t^j_{i-1},t^j_{i})$,
$G^j_T=G^j_{T-}$\,, where $t^j_{0}=0$,
$t^j_{i}=(t^j_{i-1}+1/j)\wedge\inf
\{t>t^j_{i-1};\,\rho(G_{t-},G_t)>1/j\}\wedge T$, $i,j\in\N$. Then
$\sup_{t\leq T}\rho(G_t,G^j_t)\rightarrow0$ as
$j\rightarrow\infty$.
\end{lemma}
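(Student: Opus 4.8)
The plan is to argue by contradiction, reducing the uniform convergence to the right-continuity and the existence of left limits of the path $t\mapsto G_t$, together with the fact that the partition points $t^j_i$ are designed to capture every jump of size exceeding $1/j$.

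First I would check that for each fixed $j$ the partition is finite, so that $G^j$ is well defined and $t^j_{k_j}=T$ for some $k_j\in\N$. Since $t\mapsto G_t$ is c\`adl\`ag, the set $J_j=\{t\in(0,T]:\rho(G_{t-},G_t)>1/j\}$ of jumps larger than $1/j$ is finite. Hence the infimum defining $t^j_i$ is either $+\infty$ or the minimum of a finite set lying strictly to the right of $t^j_{i-1}$; in either case $t^j_i>t^j_{i-1}$, and each step either advances time by exactly $1/j$, lands on a point of $J_j$, or reaches $T$. Thus at most $jT+\#J_j+1$ steps are needed and the partition terminates at $T$. I would then record the two defining properties of the scheme: (i) for the index $i$ with $t\in[t^j_{i-1},t^j_i)$ one has $0\le t-t^j_{i-1}\le t^j_i-t^j_{i-1}\le1/j$ and $G^j_t=G_{t^j_{i-1}}$, so the two time arguments are at distance $\le1/j$; and (ii) since $t^j_i\le\inf\{t>t^j_{i-1}:\rho(G_{t-},G_t)>1/j\}$, no jump of size $>1/j$ occurs strictly inside $(t^j_{i-1},t^j_i)$, i.e. $\rho(G_{s-},G_s)\le1/j$ for all $s\in(t^j_{i-1},t^j_i)$.

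Suppose now the conclusion fails. Then there are $\epsilon>0$, a subsequence $j_n\to\infty$ and times $t_n$ with $\rho(G_{t_n},G^{j_n}_{t_n})>\epsilon$. Writing $s_n=t^{j_n}_{i_n-1}$ and $s'_n=t^{j_n}_{i_n}$ for the interval containing $t_n$, property (i) gives $G^{j_n}_{t_n}=G_{s_n}$ and $0\le t_n-s_n\le1/j_n$, so $\rho(G_{t_n},G_{s_n})>\epsilon$. Passing to a further subsequence, $t_n\to t_*$ and $s_n\to t_*$ for a common $t_*\in[0,T]$, and the signs of $s_n-t_*$ and $t_n-t_*$ stabilize. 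I would split into three cases. If eventually $s_n,t_n\ge t_*$, then right-continuity forces $G_{s_n},G_{t_n}\to G_{t_*}$; if eventually $t_n<t_*$ (hence $s_n\le t_n<t_*$), the left limit forces $G_{s_n},G_{t_n}\to G_{t_*-}$; in both cases $\rho(G_{t_n},G_{s_n})\to0$, contradicting the lower bound $\epsilon$.

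The remaining, and crucial, case is the mixed one $s_n<t_*\le t_n$. Here $t_*\in(s_n,t_n]\subset(s_n,s'_n)$ because $t_n<s'_n$, so property (ii) applied at $s=t_*$ yields $\rho(G_{t_*-},G_{t_*})\le1/j_n\to0$. Thus $G$ is continuous at $t_*$, i.e. $G_{t_*-}=G_{t_*}$, and then the left approach $s_n\uparrow t_*$ and the right approach $t_n\downarrow t_*$ again give $G_{s_n},G_{t_n}\to G_{t_*}$, contradicting $\rho(G_{t_n},G_{s_n})>\epsilon$. This mixed case is the heart of the argument and the step I expect to be the main obstacle: it is exactly where the jump-capturing design of the partition (property (ii)) is used, since without it $t_*$ could be a genuine jump time straddled by $s_n$ and $t_n$ and the distances would fail to vanish.
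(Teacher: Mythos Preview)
Your proof is correct and follows essentially the same contradiction strategy as the paper: extract a limit point $t_*$ of a hypothetical bad sequence, and use the c\`adl\`ag property together with the jump-capturing design of the partition to force $\rho(G_{t_n},G_{s_n})\to0$. The only difference is organizational---the paper cases on whether $t_*$ is a jump point (if so, $t_*$ becomes a partition point for large $j$, forcing $s_j\ge t_*$ whenever $t_j\ge t_*$), whereas you case on the relative positions of $s_n,t_n$ with respect to $t_*$ and, in the mixed case, use property~(ii) to deduce that $t_*$ must be a continuity point; these are contrapositive formulations of the same observation.
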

\begin{proof}
Suppose the assertion of the lemma is false. Then there exists
$t\in[0,T]$ and a sequence $\{t_j\}$ such that $t_j\to t$ and
\begin{equation}
\label{eq3.6}
\rho(G_{t_j},G^j_{t_j})\not\to0.
\end{equation}
Observe that for each $j\in\N$, $G^j_{t_j}=G_{s_j}$, where
$s_j=\max\{t^j_{i};\,t^j_{i}\leq t_j\}$. Since $t_j-1/j\leq
s_j\leq t_j$,  $s_j\to t$. If $\rho(G_{t-},G_t)=0$ then
\[
0\leq \rho(G_{t_j},G^j_{t_j})\leq \rho(G_{t_j},G_t)
+\rho(G_t,G_{s_j})\rightarrow0,
\]
which contradicts (\ref{eq3.6}). If $\rho(G_{t-},G_t)>0$ then
$t\in\{t^j_{i};\,i\in\N\cup\{0\}\}$ for sufficiently large $j$
(such that $\rho(G_{t-},G_t)>1/j$). Set $J^+=\{j;\,t_j\geq t\}$,
$J^-=\{j;\,t_j<t\}$ and assume that both sets are infinite. If
$t\leq t_j$ then for sufficiently large $j$,
$t=\max\{t^j_{i};\,t^j_{i}\leq t\}\leq s_j$. Consequently,
$\lim_{j\in J^+}\rho(G_{t_j},G_t)=0$ and $\lim_{j\in J^+}
\rho(G_{s_j},G_t)=0$. Since $s_j\leq t_j$, $\lim_{j\in J^-}
\rho(G_{t_j},G_{t-})=0$ and $\lim_{j\in
J^-}\rho(G_{s_j},G_{t-})=0$. Hence
\begin{align*}
0\leq\rho(G_{t_j},G^j_{t_j})&\leq (\rho(G_{t_j},G_t)
+\rho(G_t,G_{s_j}))\mbox{\bf 1}_{\{j\in J^+\}}\\
&\quad+(\rho(G_{t_j},G_{t-})+\rho(G_{t-},G_{s_j})\mbox{\bf
1}_{\{j\in J^-\}}\rightarrow0,
\end{align*}
which also contradicts (\ref{eq3.6}).
\end{proof}
\medskip

We are now ready to prove our main theorem of this section.

\begin{theorem}
\label{thm3.1} Assume \mbox{\rm(H1)--(H4)}. Then there exists a
unique solution $(Y,Z,K)$  of the  RBSDE \mbox{\rm(\ref{eq1.1})}
such that $Y,K\in{\cal S}^2$ and  $Z\in{\cal P}^2$.
\end{theorem}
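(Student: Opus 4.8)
The proof of Theorem 3.1 splits into uniqueness and existence. Uniqueness is immediate: if $(Y,Z,K)$ and $(Y',Z',K')$ both solve \eqref{eq1.1} in $\DD$ with the same terminal value $\xi$ and the same coefficient $f$, then applying Corollary \ref{cor2.6} with $\sigma=T$ (so that $D^{\sigma-}=D'^{\sigma-}=\DD$) and $\bar Y_{\sigma-}=\bar Y_{T-}$ forces $E(\sup_{t<T}|\bar Y_t|^p)=0$; combined with $\bar Y_T=\xi-\xi=0$ this gives $Y=Y'$, whence $Z=Z'$ and $K=K'$ follow from \eqref{eq1.1}. The work is entirely in the existence part.

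For existence the plan is to carry out the approximation scheme announced in the introduction. First I would use Lemma \ref{lem3.7} to discretize $\DD$. Because of assumption (H4), the map $t\mapsto D_t\cap B(0,N)$ is c\`adl\`ag for each $N$, so Lemma \ref{lem3.7} applied to the truncated sets produces stopping-time partitions $0=\sigma_{j,0}\le\sigma_{j,1}\le\cdots\le\sigma_{j,k_j+1}=T$ and piecewise-constant families $\DD^j=\{D^j_t\}$ with $D^j_t=D^{j,i-1}$ on $[\sigma_{j,i-1},\sigma_{j,i})$, where $D^{j,i-1}$ is $\FF_{\sigma_{j,i}}$-measurable, such that $\sup_{t\le T}\rho(D_t,D^j_t)\to 0$ uniformly in probability. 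Next, on each random interval $[\sigma_{j,i-1},\sigma_{j,i})$ I would invoke Proposition \ref{prop3.6} (whose truncation hypothesis \eqref{eq3.4} is harmless by the localization in Remark \ref{rem3.8}) to produce a unique local solution, building it backwards from $i=k_j+1$ to $i=1$: at each step the terminal value of the next block is $\Pi_{D^{j,i-1}}(Y^j_{\sigma_{j,i}})$, which is $\FF_{\sigma_{j,i}}$-measurable and lies in $D^{j,i-1}$, so (H1$^*$)--(H4$^*$) are verified with $A=A_{\sigma_{j,i}}$ projected if necessary. Piecing these blocks together and setting $\Delta K^j_{\sigma_{j,i}}=\Pi_{D^{j,i-1}}(Y^j_{\sigma_{j,i}})-Y^j_{\sigma_{j,i}}$, Proposition \ref{prop3.2} guarantees that $(Y^j,Z^j,K^j)$ is the unique global solution of \eqref{eq1.1} in $\DD^j$.

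The crux is to show that $\{(Y^j,Z^j,K^j)\}$ converges and that its limit solves the RBSDE in $\DD$. For the a priori bounds I would apply Proposition \ref{prop2.4} to each $(Y^j,Z^j,K^j)$: since each $\DD^j$ satisfies (H4) with the same semimartingale $A$ (after checking that $\inf_{t\le T}\dist(A_t,\partial D^j_t)$ stays bounded below once $j$ is large, using $\rho(D_t,D^j_t)\to 0$), the bounds on $E(Y^{j*}_T)^2$, $E\int_0^T\|Z^j\|^2\,ds$ and $E(K^{j*}_T)^2$ are uniform in $j$, giving boundedness in $\SSS^2\times\PP^2\times\SSS^2$. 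For the Cauchy property I would use Proposition \ref{prop2.5} with $\sigma=T$ and $p=2$, comparing $(Y^j,Z^j,K^j)$ with $(Y^{j+k},Z^{j+k},K^{j+k})$: the terminal term $E|\bar Y_{T-}|^2$ is controlled by $\rho(D^j_T,D^{j+k}_T)\to0$ and the two cross terms on the right-hand side are each bounded by $\sup_{t\le T}\rho(D^j_t,D^{j+k}_t)$ times the uniformly bounded total variations $E|K^{j}|_T$ and $E|K^{j+k}|_T$, exactly as in \eqref{eq3.06}. Hence $\{(Y^j,Z^j,K^j)\}$ is Cauchy and converges to some $(Y,Z,K)$ in $\SSS\times\PP\times\SSS$, and along a subsequence a.s.

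I expect the genuine obstacle to be the passage to the limit in the obliqueness condition (b) of Definition \ref{def2.3}, i.e. verifying that the limit $K$ increases only when $Y_t\in\partial D_t$. The conditions $Y^j_t\in D^j_t$ and $\int_0^T\langle Y^j_{s-}-X_{s-},dK^j_s\rangle\le0$ must survive the limit even though the constraint set $\DD^j$ is itself moving. For $Y_t\in D_t$ this follows from uniform convergence $\rho(D^j_t,D_t)\to0$ together with $Y^j\to Y$ uniformly along a subsequence and closedness of the $D_t$. For the variational inequality I would fix an admissible test process $X$ with $X_t\in D_t$, approximate it by $X^j_t=\Pi_{D^j_t}(X_t)\in D^j_t$ (which converges uniformly to $X$ since $\rho(D^j,D)\to0$), use $\int_0^T\langle Y^j_{s-}-X^j_{s-},dK^j_s\rangle\le0$, and pass to the limit exploiting the uniform bound on $|K^j|_T$, the uniform convergence of $Y^j$ and $X^j$, and convergence of $K^j$ to $K$. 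The delicate point is that $K^j\to K$ only weakly in general, so I would either strengthen the convergence using the estimate on $\sum|\Delta K^j|^2$ and $\int\dist(A_{s-},\partial D_{s-})\,d|K^j|$ from Proposition \ref{prop2.4}, or pass to the limit in the integrated form against $dK^j$ via a Helly-type argument, obtaining condition (b) for $(Y,Z,K)$ and thereby completing the proof that $(Y,Z,K)$ solves \eqref{eq1.1} in $\DD$ with $Y,K\in\SSS^2$, $Z\in\PP^2$.
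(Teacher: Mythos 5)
Your architecture is the paper's: uniqueness from Corollary \ref{cor2.6}, discretization of $\DD$ via Lemma \ref{lem3.7}, local solutions from Proposition \ref{prop3.6} glued together by Proposition \ref{prop3.2}, and a Cauchy estimate from Proposition \ref{prop2.5}. The genuine gap is at the step you treat as routine: the claim that the total variations $E|K^j|_T$ are uniformly bounded. Proposition \ref{prop2.4} controls $E(K^{j,*}_T)^2$ (the sup of $|K^j_t|$, not the variation) and $E\int_0^T\dist(A_{s-},\partial D^j_{s-})\,d|K^j|_s$; under (H4) the quantity $\inf_{t\leq T}\dist(A_t,\partial D_t)$ is only a.s.\ positive, not bounded below by a deterministic constant, so you cannot divide through to obtain a uniform bound on $E|K^j|_T$. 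Worse, it is not even guaranteed that $A_t$ lies in the interior of the discretized set $D^j_t$ for all $t$, so (H4) for $\DD^j$ with the same $A$ can fail on a small event. The paper resolves both points by localization: it introduces $\gamma_j=\inf\{t:\dist(A_t,\partial D^j_t)<\delta_j\}\wedge T$ with $\delta_j\downarrow0$ chosen so that $P(\gamma_j<T)\to0$, solves the RBSDE in the \emph{stopped} region $\DD^{j,\gamma_j-}$, and then, for each $\varepsilon>0$, manufactures stopping times $\sigma_j$ with $P(\sigma_j<T)\leq\varepsilon$ and $|K^j|_{\sigma_j-}\leq M$ by Tchebyshev applied to $\int_0^T\dist(A^j_{s-},\partial D^j_{s-})\,d|K^j|_s$. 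The comparison of Proposition \ref{prop2.5} is then run only up to $\sigma=\sigma_j\wedge\sigma_{j+k}\wedge\gamma_j$, and the cross terms are bounded by $2\varepsilon N^2+2M\,E\min\big(\sup_{s}\rho(D^j_{s-},D^{j+k}_{s-}),N\big)$ rather than by $\sup_s\rho\cdot E|K^j|_T$. Your version with $\sigma=T$ does not close.

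A second, smaller issue: you never remove the boundedness hypothesis $D_t\subset B(0,N)$, which you need for Lemma \ref{lem3.7} (Hausdorff distance of bounded sets), for $|\xi^j|\leq N$ in the estimates, and for Proposition \ref{prop3.6}; your appeal to the remark that (\ref{eq3.4}) is superfluous is circular, since that remark is itself justified by Step 2 of the proof of the theorem. The paper's Step 2 truncates $D_t$ by $B(A^{\gamma_j-}_t,N_j)$, applies Step 1 in the resulting bounded regions, and compares successive solutions via Proposition \ref{prop2.5} with an exponent $p<2$, using H\"older together with $P(\tau_{j,k}<T)\to0$ to absorb the terminal term on $\{\sigma<T\}$; the $p=2$ comparison alone would not do this. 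Your concern about passing to the limit in condition (b) of Definition \ref{def2.3} is legitimate, but that is precisely the point where the paper itself invokes only ``standard arguments,'' and your sketch there is acceptable.
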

\begin{proof}
{\em Step 1.} We begin by proving the theorem under the additional
assumption that there exists $N\in\N$ such that
\begin{equation}
\label{eq3.7} D_t\subset B(0,N),\quad t\in[0,T].
\end{equation}
For $j\in\N$ set $\sigma^j_{0}=0$ and
\[
\sigma^j_{i}=(\sigma^j_{i-1}+1/j)\wedge
\inf\{t>\sigma^j_{i-1};\rho(D_{t-},D_t)>1/j\}\wedge T, \quad
i\in\N.
\]
Since $t\to D_t$ is c\`adl\`ag, for every $j\in\N$ there is $k_j$
such that $P(\sigma^j_{k_j}<T)\leq1/j$. Set
\[
D^j_t=\left\{
\begin{array}{ll}
D_{\sigma^j_{i-1}},& t\in[\sigma^j_{i-1},
\sigma^j_{i}[,\,i=1,\dots,k_j-1,\\
D_{\sigma^j_{k_j}},& t\in[\sigma^j_{k_j},T].
\end{array}
\right.
\]
Then
\begin{equation}
\label{eq3.8}
\sup_{t\leq T}\rho(D^j_t,D_t)\arrowp0
\end{equation}
as $j\rightarrow\infty$. Indeed, by Lemma \ref{lem3.7},
$\sup_{t\leq \sigma^j_{k_j}}\rho(D^j_t,D_t)\to0$ $P$-a.s.
Therefore for every $\varepsilon>0$,
\[
P(\sup_{t\leq T}\rho(D^j_t,D_t)>\varepsilon) \leq P(\sup_{t\leq
\sigma^j_{k_j}}\rho(D^j_t,D_t)>\varepsilon)+1/j \rightarrow0.
\]
By the above and (H4) one can find a sufficiently slowly
decreasing sequence $\delta_j\downarrow0$ such that the sequence
$\{\gamma_j\}$ defined as
\[
\gamma_j=\inf\{t;\,\dist(A_t,\partial D^j_t)<\delta_j\}\wedge
T,\quad j\in\N
\]
has the property that $P(\gamma_j<T)\rightarrow0$. By Propositions
\ref{prop3.2} and \ref{prop3.6} for each $j\in\N$ there exists a
solution $(Y^j,Z^j,K^j)$ of RBSDE in the stopped time-dependent
region
$\DD^{j,\gamma_j-}=\{D^{j,\gamma_j-}_t=D^j_{t\wedge(\gamma_j-)};\,t\in[0,T]\}$
with terminal value $\xi^j=\Pi_{D^j_{\gamma_{j}-}}(\xi)$. Set
$A^j_t=A^{\gamma_j-}_t$, $ t\in[0,T]$, and observe that
$\inf_{t\leq T}\dist(A^j_t,\partial D^j_t)\geq\delta_j>0$. Since
for any predictable locally bounded process $H$,
\[
(\int_0^{\cdot}\langle H_s,\,dA^j_s\rangle)^*_T=(\int_0^{\cdot}
\langle H_s,\,dA_s\rangle)^*_{\gamma_j-}\,,
\]
it follows from Remark \ref{rem2.1} that there is $c>0$ such that
$\|A^j\|_{{\cal H}^2}\leq c\|A\|_{{\cal H}^2}$, $j\in\N$. Hence,
by Proposition \ref{prop2.4}, there exists $C>0$ such that for
every $j\in\N$,
\begin{align*}
&E\big(\sup_{t\leq T}|Y^j_t|^2+\int_0^T\|Z^j_s\|^2\,ds
+\sum_{s\leq T}|\Delta K^j_s|^2 +\int_0^T\dist(A^j_{s-},\partial
D^j_{s-})\,d|K^j|_s\big)\\
&\qquad\le C \Big(N^2+E\int_0^T|f(s,0,0)|^2\,ds +\|A\|^2_{{\cal
H}^2}\Big).
\end{align*}
For every $\varepsilon>0$ there is $M>0$,
a stopping time $\sigma_j\leq T$ and $j_0\in\N$  such that for
every $j\geq j_0$,
\begin{equation}
\label{eq3.9} P(\sigma_j< T)\leq \varepsilon,\quad
|K^j|_{\sigma_j-}\leq M.
\end{equation}
Indeed, by (H4) there is $\delta>0$ such that $P(\inf_{t\leq T}
\mbox{\rm dist}(A_t,\partial D_t)\leq\delta)\leq {\varepsilon}/4$.
On the other hand, by (\ref{eq3.8}), there is $j_0$ such that for
$j\geq j_0$, $P(\sup_{t\leq T}
\rho(D^j_t,D_t)>\delta)\leq\varepsilon/4$. Therefore for every
$j\geq j_0$,
\[
P(\inf_{t\leq T}\mbox{\rm dist}(A^j_t,\partial D^j_t)\leq\delta)
\leq P( \inf_{t\leq T}\mbox{\rm dist}(A_t,\partial
D_t)\leq2\delta) + P(\sup_{t\leq T}
\rho(D^j_t,D_t)>\delta)\leq\frac{\varepsilon}2\,.
\]
Set $c=C\big(E(N^2+\int_0^T|f(s,0,0)|^2\,ds)+\|A\|^2_{{\cal
H}^2}\big)$,  $M=(2c)/(\varepsilon\delta)$ and
$\sigma_j=\inf\{t;|K^j|_t>M\}\wedge T$. By Proposition
\ref{prop2.4} and Tchebyshev's inequality,
\begin{align*}
P(\sigma_j< T)&\leq P(|K^j|_T>M) \leq P(|K^j|_T>M,\,\inf_{t\leq T}
\mbox{\rm dist}(A^j_t,\partial D^j_t)>\delta)+\frac\varepsilon2\\
&\leq P(\inf_{t\leq T}
\mbox{\rm dist}(A^j_t,\partial D^j_t)|K^j|_T>M\delta)+\frac\varepsilon2\\
&\leq P(\int_0^T\mbox{\rm dist}
(A^j_{s-},\partial D^j_{s-})\,d|K^j|_s>M\delta)+\frac\varepsilon2\\
&\leq\frac{c}{M\delta}+\frac\varepsilon2\le\varepsilon.
\end{align*}
If we set $\sigma=\sigma_j\wedge\sigma_{j+k}\wedge\gamma_j$ then
by Proposition \ref{prop2.5},
\begin{align*}
&E\big((\sup_{ t<\sigma}|Y^j_t-Y^{j+k}_t|^2
+\int_0^{\sigma}\|Z^j_s-Z^{j+k}_s\|^2\,ds\big)\\
&\qquad\leq C\big( E(| Y^j_{\sigma-}-Y^{j+k}_{\sigma-}|^2
+\int_0^{\sigma-}
\rho(D^j_{s-},D^{j+k}_{s-})\,d(|K^j|_s+|K^{j+k}|_s)\big)\\
&\qquad\leq C\big(E(|\xi^j-\xi^{j+k}|^2+2\varepsilon N^2
+2M\min(\sup_{s\leq T}\rho(D^j_{s-},D^{j+k}_{s-}),N)\big).
\end{align*}
Since $\lim_{j\to\infty}\sup_kE|\xi^j-\xi^{j+k}|^2=0$ and by
(\ref{eq3.8}),
\[
\lim_{j\to\infty}\sup_kE\min(\sup_{s\leq T}
\rho(D^j_{s-},D^{j+k}_{s-}),N)=0,
\]
it follows that $\{(Y^j,Z^j,K^j)\}$ is a Cauchy sequence
in ${\cal S}\times{\cal P}\times{\cal S}$. Its limit $(X,Z,K)$
is a solution of RBSDE (\ref{eq1.1}).
\medskip\\
{\em Step 2.} We will show  how to dispense with assumption
(\ref{eq3.7}). Set $\gamma_j=\inf\{t\ge0:\sup_{s\leq t}
|A_s|>N_j\}\wedge T$, $j\in\N$, where $N_j\uparrow \infty$ and
\[
D^j_t=D^{\gamma_j-}_t\cap B(A^{\gamma_j-}_t,N_j),\quad t\in[0,T].
\]
Clearly $D^j_t\subset B(0,2N_j)$ and $P(\gamma_j<T)\leq
P(\sup_{t\leq T}|A_t|>N_j)\downarrow0$. By Step 1 for each
$j\in\N$ there exists a solution $(Y^j,Z^j,K^j)$ of RBSDE in
$\{D^j_t;\,t\in[0,T]\}$  with terminal value
$\xi^j=\Pi_{D^j_T}(\xi)$. Set $A^j_t=A^{\gamma_j-}_t$,
$t\in[0,T]$.
Since by Remark \ref{rem2.1} there is $c>0$ such that
$\|A^j\|_{{\cal H}^2}\leq c\|A\|_{{\cal H}^2}$ for $j\in\N$,  using
Proposition  \ref{prop2.4} we obtain
\begin{align*}
& E\big(\sup_{t\leq T}|Y^j_t|^2+\int_0^T\|Z^j_s\|^2\,ds
+\sum_{s\leq T}|\Delta K^j_s|^2 +\int_0^T\dist(A^j_{s-},\partial
D^j_{s-})\,d|K^j|_s\big)\\
&\qquad\le C \Big(E\big(\xi^2+\int_0^T|f(s,0,0)|^2\,ds\big)
+\|A\|_{{\cal H}^2}\Big).
\end{align*}
Set  $\tau_{j,k}=\inf\{t;\sup_{s\leq t}|Y^{j+k}_s|>2N_j\}\wedge T$
for $j,k\in\N$ and observe that by Tschebyshev's inequality,
\[
P(\tau_{j,k}<T)\leq P(\sup_{t\leq T}|Y^{j+k}_t|>2N_j) \leq
(2N_j)^{-2}C \Big(E\big(\xi^2+\int_0^T|f(s,0,0)|^2\,ds\big)
+|\|A\|_{{\cal H}^2}\Big),
\]
which implies that $\lim_{j\to\infty}\sup_kP(\tau_{j,k}<T)=0$. Let
$\sigma=\tau_{j,k}$. Since $Y^j_t\in D^{j+k}_t$ for $t\in[0,T]$
and $Y^{j+k}\in D^j_t$ for $t<\sigma$, from Proposition
\ref{prop2.5}  it follows that for $p<2$,
\begin{align*}
&E\big( \sup_{ t<\sigma}|Y^j_t-Y^{j+k}_t|^p\big)
\leq CE(| Y^j_{\sigma-}-Y^{j+k}_{\sigma-}|^p)\\
&\qquad\leq E|\xi^j-\xi^{j+k}|^p
+E| Y^j_{\sigma-}-Y^{j+k}_{\sigma-}|^p{\bf 1}_{\{\sigma<T\}}\\
&\qquad\leq E|\xi^j-\xi^{j+k}|^p
+(E|Y^j_{\sigma-}-Y^{j+k}_{\sigma-}|^2)^{p/2}
(P(\sigma<T))^{(2-p)/2}.
\end{align*}
Hence $\lim_{j\to\infty}\sup_kE\sup_{ t<\sigma}|
Y^j_{t}-Y^{j+k}_{t}|^p=0$ for $p<2$ from which we deduce that
$\{(Y^j,Z^j,K^j)\}_{j\in\N}$ is a Cauchy sequence in ${\cal
S}\times{\cal P}\times{\cal S}$. Using standard arguments one
can show that its limit $(X,Z,K)$ is a solution of (\ref{eq1.1}).
\end{proof}

\begin{remark}
Arguing as in Step 2 of the above proof one can dispense with
assumption (\ref{eq3.4}) in Proposition \ref{prop3.6}. Therefore
under (H1${}^*$)--(H4${}^*$) there exists  a unique solution
$(Y,Z,K-K_\tau)$ of \mbox{\rm(\ref{eq3.2})} such that
(\ref{eq3.05}) is satisfied.
\end{remark}

\begin{remark}
\label{rem3.8}
The assumption that $D_T=D_{T-}$ in Theorem
\ref{thm3.1} is superfluous, because if $D_T\neq D_{T-}$ then from
Theorem \ref{thm3.1} it follows that there exists a unique
solution of the RBSDE
\[
Y_t=\Pi_{D_{T-}}(\xi)+\int^T_tf(s,Y_s,Z_s)\,ds
-\int^T_tZ_s\,dW_s+K_T-K_t,\quad t\in[0,T]
\]
in $\{D^{T-}_t;\, t\in[0,T]\}$. Set
\[
Y'_t=\left\{\begin{array}{ll}
Y_t,&\mbox{\rm if } t<T, \\
\xi, &\mbox{\rm if } t=T,
\end{array}
\right.\quad Z=Z',\quad K'_t=\left\{\begin{array}{ll}
K_t,&\mbox{\rm if } t<T, \\
K_T+\Pi_{D_{T-}}(\xi)-\xi, &\mbox{\rm if } t=T.
\end{array}
\right.
\]
Then $(Y',Z',K')$ is a unique solution of RBSDEs in
$\{D_t;\,t\in[0,T]\}$.
\end{remark}

\begin{remark}
\label{rem3.10} In Step 1 of the proof of Theorem \ref{thm3.1} and
in Lemma \ref{lem3.7} one can use stopping times $\sigma^j_{i}$
defined as follows: $\sigma^j_{0}=0$ and
\[
\sigma^j_{i}=(\sigma^j_{i-1}+a^j_{i})
\wedge\inf\{t>\sigma^j_{i-1};\rho(D_{t-},D_t)>1/j\} \wedge T,\quad
i,j\in\N,
\]
where $a^j_{i}$ is an arbitrary constant such that $1/j\leq
a^j_{i}\leq 2/j$. This follows from the fact that if we use the
modified stopping times $\sigma^j_{i}$ to  define the process
$D^j$ then (\ref{eq3.8}) still holds true. We will use this simple
observation in the next section.
\end{remark}

\nsubsection{Approximation of solutions of RBSDEs by the modified
penalization method} \label{sec4}

We start with a priori estimates for solutions of the penalized
BSDEs  and their local versions.

\begin{proposition}
\label{prop4.1} Assume \mbox{\rm(H1)--(H4)}. If $(Y^n,Z^n,K^n)$ is
a solution of \mbox{\rm(\ref{eq1.2})} such that $Y^n\in{\cal S}^2$
then there exists $C>0$ depending only on $\mu,\lambda, T$ such
that
\begin{align*}
&E\Big((Y^{n,*}_T)^2+\int_0^T\|Z^n_s\|^2\,ds +\sum_{0<s\leq T}
|\Delta K^n_s|^2+(K^{n,*}_T)^2
+\int_0^T\dist(A_{s-}\,,\partial D_{s-})\,d|K^n|_s\Big) \\
&\qquad\leq C\Big(E\big(|\xi|^2+\int_0^T|f(s,0,0)|^2\,ds\big)
+\|A\|^2_{{\cal H}^2}\Big).
\end{align*}
\end{proposition}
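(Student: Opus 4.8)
The plan is to mirror the proof of Proposition~\ref{prop2.4} almost verbatim, with the crucial modification that the penalization structure~(\ref{eq1.3}) must be used to control the term $\int_0^{\tau_n}\langle Y^n_{s-},dK^n_s\rangle$ in place of the abstract variational inequality~(b) of Definition~\ref{def2.3}. First I would apply It\^o's formula to $|Y^n|^2$ on $[0,\tau_n]$ with $\tau_n=\inf\{t>0;\int_0^t\|Z^n_s\|^2\,ds>n\}\wedge T$, obtaining exactly the same identity as in Proposition~\ref{prop2.4} but with the reflection term replaced by $2\int_0^{\tau_n}\langle Y^n_{s-},dK^n_s\rangle$. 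The task is then to reproduce the estimate~(\ref{eq2.02}).

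The key step is to verify that the same sign estimate holds for the penalization term. For the continuous part $K^{n,c}$ we write $dK^{n,c}_s=-n(Y^n_s-\Pi_{D_s}(Y^n_s))\,ds$; on the set $\{Y^n_s\notin D_s\}$ the vector $Y^n_s-\Pi_{D_s}(Y^n_s)$ points along the outward normal, so by Remark~\ref{rem2.2}(c) applied with $x=Y^n_s$, $y=\Pi_{D_s}(Y^n_s)$ and $a=A_s$ we get $\langle Y^n_s-A_s,dK^{n,c}_s\rangle\le-\dist(A_s,\partial D_s)\,|dK^{n,c}_s|$, while on $\{Y^n_s\in D_s\}$ the integrand vanishes. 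For the jump part, at each $\sigma_{n,i}$ we have $\Delta K^n_{\sigma_{n,i}}=\Pi_{D_{\sigma_{n,i}-}}(Y^n_{\sigma_{n,i}})-Y^n_{\sigma_{n,i}}$, and the same application of Remark~\ref{rem2.2}(c) with $G=D_{\sigma_{n,i}-}$ and $a=A_{\sigma_{n,i}-}$ yields $\langle Y^n_{\sigma_{n,i}}-A_{\sigma_{n,i}-},\Delta K^n_{\sigma_{n,i}}\rangle\le-\dist(A_{\sigma_{n,i}-},\partial D_{\sigma_{n,i}-})|\Delta K^n_{\sigma_{n,i}}|$. Summing the continuous contribution and the jumps gives precisely
\[
\int_0^{\tau_n}\langle Y^n_{s-}-A_{s-},dK^n_s\rangle
\le-\int_0^{\tau_n}\dist(A_{s-},\partial D_{s-})\,d|K^n|_s,
\]
which is the analogue of the first inequality in~(\ref{eq2.02}). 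The remaining summand $\int_0^{\tau_n}\langle A_{s-},dK^n_s\rangle$ is handled by the integration-by-parts formula and Remark~\ref{rem2.1} exactly as before, producing the terms $\langle K^n_{\tau_n},A_{\tau_n}\rangle-\int_0^{\tau_n}\langle K^n_{s-},dA_s\rangle-\sum_{s\le\tau_n}\langle\Delta K^n_s,\Delta A_s\rangle$.

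From this point the argument is identical to Proposition~\ref{prop2.4}: I would bound $\int_0^{\tau_n}\langle Y^n_s,f(s,Y^n_s,Z^n_s)\rangle\,ds$ via (H3), control the cross term $-2\sum_{s\le\tau_n}\langle\Delta K^n_s,\Delta A_s\rangle$ by $\tfrac12\sum|\Delta K^n_s|^2+2[A]_T$, use the representation $K^n_t=Y^n_0-Y^n_t-\int_0^t f(s,Y^n_s,Z^n_s)\,ds+\int_0^t Z^n_s\,dW_s$ to deduce the analogue of~(\ref{eq2.3}) for $E(K^{n,*}_{\tau_n})^2$, estimate $E|\int_0^{\tau_n}\langle K^n_{s-},dA_s\rangle|\le c\|K^{n,*}_{\tau_n}\|_{L^2}\|A\|_{\HH^2}$, and absorb the martingale term, obtaining the analogue of~(\ref{eq2.1}). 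The second part, estimating $E(Y^{n,*}_T)^2$, is then the verbatim Gronwall argument of~(\ref{eq2.5})--(\ref{eq2.6}).

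I expect the main obstacle to be purely bookkeeping rather than conceptual: one must be careful that the jump term $K^{n,d}$ is driven by the left-limit set $D_{\sigma_{n,i}-}$ and the interior point $A_{\sigma_{n,i}-}$, so that Remark~\ref{rem2.2}(c) applies with the correct convex set and the correct admissible interior point, and that the resulting bound involves $\dist(A_{s-},\partial D_{s-})$ at the \emph{left limit}, matching the form of the claimed estimate. Provided the sign estimate is assembled correctly from the continuous and discrete pieces, the penalized equation behaves exactly like a reflected equation for the purposes of these a priori bounds, and no genuinely new estimate is needed beyond those already established.
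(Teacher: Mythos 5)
Your overall strategy is exactly the paper's: repeat the proof of Proposition \ref{prop2.4} step by step, the only new ingredient being the sign estimate
$\int_0^t\langle Y^n_{s-}-A_{s-},dK^n_s\rangle\le-\int_0^t\dist(A_{s-},\partial D_{s-})\,d|K^n|_s$,
obtained by treating $K^{n,c}$ and $K^{n,d}$ separately. Your continuous part is correct (and the $s$ versus $s-$ distinction is harmless there because $dK^{n,c}$ is absolutely continuous). There is, however, a concrete slip in the jump part. The Stieltjes integral contributes $\langle Y^n_{\sigma_{n,i}-}-A_{\sigma_{n,i}-},\Delta K^n_{\sigma_{n,i}}\rangle$ at each jump, whereas the inequality you derive from Remark \ref{rem2.2}(c) bounds $\langle Y^n_{\sigma_{n,i}}-A_{\sigma_{n,i}-},\Delta K^n_{\sigma_{n,i}}\rangle$, i.e.\ uses the right value of $Y^n$ rather than its left limit. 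Since $\Delta Y^n_{\sigma_{n,i}}=-\Delta K^n_{\sigma_{n,i}}$, the two inner products differ by a square with the unfavourable sign:
\[
\langle Y^n_{\sigma_{n,i}-}-A_{\sigma_{n,i}-},\Delta K^n_{\sigma_{n,i}}\rangle
=\langle Y^n_{\sigma_{n,i}}-A_{\sigma_{n,i}-},\Delta K^n_{\sigma_{n,i}}\rangle
+|\Delta K^n_{\sigma_{n,i}}|^2,
\]
so your estimate only yields $\le-\dist(A_{\sigma_{n,i}-},\partial D_{\sigma_{n,i}-})|\Delta K^n_{\sigma_{n,i}}|+|\Delta K^n_{\sigma_{n,i}}|^2$. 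Carrying the extra term through It\^o's formula cancels (indeed reverses) the term $\sum_{s\le\tau_n}|\Delta K^n_s|^2$ sitting on the left-hand side, and that sum is one of the quantities the proposition must control, so the argument as written does not close.

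The fix is one line and is exactly what the paper does: note that $Y^n_{\sigma_{n,i}-}=\Pi_{D_{\sigma_{n,i}-}}(Y^n_{\sigma_{n,i}})$ lies on $\partial D_{\sigma_{n,i}-}$ and that $\Delta K^n_{\sigma_{n,i}}$ is a nonnegative multiple of the inward normal there, so Remark \ref{rem2.2}(b) (not (c)) applied at the boundary point $y=Y^n_{\sigma_{n,i}-}$ gives directly $\langle Y^n_{\sigma_{n,i}-}-A_{\sigma_{n,i}-},\Delta K^n_{\sigma_{n,i}}\rangle\le-\dist(A_{\sigma_{n,i}-},\partial D_{\sigma_{n,i}-})|\Delta K^n_{\sigma_{n,i}}|$. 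With this replacement the remainder of your argument (the bound on the generator term, the cross term with $\Delta A$, the representation of $K^n$ from the equation, and the Gronwall step) goes through exactly as in Proposition \ref{prop2.4}, as you describe.
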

\begin{proof}
The proof is similar to that of Proposition \ref{prop2.4}. To get
the desired estimate it suffices  to repeat step by step arguments
from the proof of Proposition \ref{prop2.4}, the only difference
being in the fact that to obtain an analogue of (\ref{eq2.02}) we
have to prove that
\begin{equation}
\label{eq4.1} \int_0^t\langle
Y^n_{s-}-A_{s-},dK^{n}_s\rangle \leq-\int_0^t\dist(A_{s-},\partial
D_{s-})\,d|K^{n}|_s, \quad t\in[0,T].
\end{equation}
To prove (\ref{eq4.1}) let us define $K^{n,d}$, $K^{n,c}$ by
(\ref{eq1.3}).
Observe that by Remark \ref{rem2.2}(c),
\begin{align*}
\int_0^t\langle Y^n_{s-}-A_{s-},dK^{n,c}_s\rangle
&=n\int_0^t\langle Y^n_{s-}-A_{s-},
\Pi_{D_s}(Y^n_s)-Y^n_s\rangle\,ds\\
&\leq-n\int_0^t \dist(A_{s-},\partial D_{s-})
|\Pi_{D_s}(Y^n_s)-Y^n_s|\,ds\\
&=-\int_0^t \dist(A_{s-},\partial D_{s-})\,d|K^{n,c}|_s\,.
\end{align*}
By Remark \ref{rem2.2}(b), for  $i=1,2,\dots,k_n$ we have
\begin{align*}
\langle Y^n_{\sigma_{n,i}-}-A_{\sigma_{n,i}-}, \Delta
K^n_{\sigma_{n,i}}\rangle &= \langle
Y^n_{\sigma_{n,i}-}-A_{\sigma_{n,i}-},
\Pi_{D_{\sigma_{n,i}-}}(Y^n_{\sigma_{n,i}})
-Y^n_{\sigma_{n,i}}\rangle\\
&\leq-\dist(A_{\sigma_{n,i}-},\partial D_{\sigma_{n,i}-})
|\Pi_{D_{\sigma_{n,i}-}}(Y^n_{\sigma_{n,i}})-Y^n_{\sigma_{n,i}}|\\
&=-\dist(A_{\sigma_{n,i}-},\partial D_{\sigma_{n,i}-})|\Delta
K^n_{\sigma_{n,i}}|.
\end{align*}
Putting together the above two estimates we get (\ref{eq4.1}).
\end{proof}
\medskip

Let $\xi'\in L^2$ and let $\DD'=\{D'_t,t\in[0,T]\}$ be a family
satisfying (H4)  with some semimartingale $A'$. In the next
proposition we consider RBSDE in $\DD'$ of the form
\begin{equation}
\label{eq4.2}
Y'^n_t=\xi'+\int^T_tf(s,Y'^n_s,Z'^n_s)\,ds
-\int^T_t Z'^n_s\,dW_s+K'^n_T-K'^n_t,\quad
t\in[0,T],
\end{equation}
where
\[
K'^n_t=-n\int_0^t (Y'^n_s-\Pi_{D'_s}(Y'^n_s))\,ds
-\sum_{\sigma'_{n,k}\leq
t}(Y'^n_{\sigma'_{n,k}}-\Pi_{D'_{{\sigma'_{n,k}}-}}
(Y'^n_{\sigma'_{n,k}})) ,\quad t\in[0,T]
\]
and $\sigma'_{n,0}=0$,
$\sigma'_{n,i}=\inf\{t>\sigma'_{n,i-1};\,\rho(D'_t\cap
B(0,n),D'_{t-}\cap B(0,n))>1/n\}\wedge T$, $i=1,\dots,k'_n$ for
some $k_n'\in\N$.

\begin{proposition}
\label{prop4.2} Let $(Y^n,Z^n,K^n)$, $(Y'^{n},Z'^{n},K'^{n})$ be
solutions of \mbox{\rm(\ref{eq1.2})} and \mbox{\rm(\ref{eq4.2})},
respectively, such that $Y^n,Y'^{n}\in{\cal S}^2$. Set $\bar
Y^n=Y^n-Y'^{n}$, $\bar Z^n=Z^n-Z'^{n}$, $\bar K^n=K^n-K'^{n}$. If
$f$ satisfies \mbox{\rm(H3)} then for every $p\in(1,2]$ there
exists $C>0$ depending only on $\mu,\lambda,T$ such that for every
stopping time $\sigma$ such that $0\leq\sigma\leq T$,
\begin{align*}
&E\Big(\sup_{t<\sigma}|\bar Y^n_t|^p +\int_0^\sigma|\bar
Y^n_s|^{p-2}{\bf 1}_{\{\bar Y^n_s\neq0\}}\|\bar Z^n_s\|^2\,ds
+I^n_{\sigma-}\Big)\\
&\quad\leq CE\Big(|\bar Y^n_{\sigma-}|^p
+\int_0^{\sigma-}|\bar Y^n_{s-}|^{p-2}
|\Pi_{D_{s-}}(\Pi_{D'_{s-}}(Y'^{n}_{s-}))
-\Pi_{D'_{s-}}(Y'^{n}_{s-})|\,d|K^{n,c}|_s\\
&\qquad+\int_0^{\sigma-}|\bar Y^n_{s-}|^{p-2}
|\Pi_{D'_{s-}}(\Pi_{D_{s-}}(Y^n_{s-}))
-\Pi_{D_{s-}}(Y^n_{s-})|\,d|K'^{n,c}|_s\\
&\qquad+\int_0^{\sigma-}|\bar Y^n_{s-}|^{p-2}
(|\Pi_{D_{s-}}(Y'^{n}_{s-})-Y'^{n}_{s-}|\,d|K^{n,d}|_s
+|\Pi_{D'_{s-}}(Y^n_{s-})-Y^n_{s-}|\,d|K'^{n,d}|_s)\Big),
\end{align*}
where $I^n_t=\sum_{s\leq t}(|\bar Y^n_s|^p-|\bar
Y^n_{s-}|^p-p|\bar Y^n_{s-}|^{p-1}\langle \sgn(\bar
Y^n_{s-},\Delta \bar Y^n_s\rangle)$, $t\ge0$.
\end{proposition}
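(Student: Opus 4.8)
The plan is to mimic the proof of Proposition \ref{prop2.5} essentially line by line, the only genuinely new ingredient being the estimate of the reflection term $\langle\bar Y^n_{s-},d\bar K^n_s\rangle$, which must now account for the splitting $K^n=K^{n,c}+K^{n,d}$ (and similarly for $K'^n$). First I would apply It\^o's formula for the convex function $x\mapsto|x|^p$ to $\bar Y^n$ on $[t,\sigma)$, exactly as in Proposition \ref{prop2.5}, producing the term $\tfrac{p(p-1)}2\int|\bar Y^n_s|^{p-2}{\bf 1}_{\{\bar Y^n_s\neq0\}}\|\bar Z^n_s\|^2\,ds$, the jump correction $I^n$, the generator term, a stochastic integral, and $p\int|\bar Y^n_{s-}|^{p-1}\langle\sgn(\bar Y^n_{s-}),d\bar K^n_s\rangle$. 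The generator term is controlled by (H3) and absorbed into the $\|\bar Z^n\|^2$ term word for word as before, the stochastic integral is a uniformly integrable martingale, and the concluding Gronwall argument (producing $\sup_{t<\sigma}|\bar Y^n_t|^p$ and the leading $E|\bar Y^n_{\sigma-}|^p$) is verbatim. Hence everything reduces to dominating $\langle\bar Y^n_{s-},d\bar K^n_s\rangle$ by the four integrands on the right-hand side of the claim (with the weight $|\bar Y^n_{s-}|^{p-2}{\bf 1}_{\{\bar Y^n_{s-}\neq0\}}$ supplied by $|\bar Y^n_{s-}|^{p-1}\sgn(\bar Y^n_{s-})$).

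For the continuous part set $P_s=Y^n_s-\Pi_{D_s}(Y^n_s)$, $P'_s=Y'^n_s-\Pi_{D'_s}(Y'^n_s)$, so that $dK^{n,c}_s=-nP_s\,ds$ and $d|K^{n,c}|_s=n|P_s|\,ds$ (with analogous formulas for the primed quantities), and write $a_s=\Pi_{D_s}(Y^n_s)\in D_s$, $b_s=\Pi_{D'_s}(Y'^n_s)\in D'_s$. Since $Y^n_s-Y'^n_s=(a_s-b_s)+(P_s-P'_s)$, one obtains
\[
\langle\bar Y^n_s,dK^{n,c}_s-dK'^{n,c}_s\rangle
=-n\langle a_s-b_s,P_s-P'_s\rangle\,ds-n|P_s-P'_s|^2\,ds
\le-n\langle a_s-b_s,P_s-P'_s\rangle\,ds.
\]
Here I would insert the double projections $w_s=\Pi_{D_s}(b_s)=\Pi_{D_s}(\Pi_{D'_s}(Y'^n_s))\in D_s$ and $w'_s=\Pi_{D'_s}(a_s)=\Pi_{D'_s}(\Pi_{D_s}(Y^n_s))\in D'_s$. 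By Remark \ref{rem2.2}(c),(a) the vector $-P_s$ is an inward normal at $a_s$, so $\langle a_s-x,P_s\rangle\ge0$ for every $x\in D_s$; taking $x=w_s$ kills $\langle a_s-w_s,P_s\rangle$ and leaves $-n\langle a_s-b_s,P_s\rangle\le n|w_s-b_s|\,|P_s|$, i.e. the integrand $|\Pi_{D_s}(\Pi_{D'_s}(Y'^n_s))-\Pi_{D'_s}(Y'^n_s)|\,d|K^{n,c}|_s$. The symmetric manipulation with $w'_s$ treats $n\langle a_s-b_s,P'_s\rangle$ and gives the term against $d|K'^{n,c}|_s$; these are precisely the first two integrands.

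For the discontinuous part note that $\Delta Y^n_{\sigma_{n,i}}=-\Delta K^n_{\sigma_{n,i}}$ forces $Y^n_{\sigma_{n,i}-}=\Pi_{D_{\sigma_{n,i}-}}(Y^n_{\sigma_{n,i}})$, so $Y^n_{\sigma_{n,i}-}$ is a contact point and $\Delta K^n_{\sigma_{n,i}}$ points along the inward normal there. Splitting $\langle Y^n_{\sigma_{n,i}-}-Y'^n_{\sigma_{n,i}-},\Delta K^n_{\sigma_{n,i}}\rangle$ through $\Pi_{D_{\sigma_{n,i}-}}(Y'^n_{\sigma_{n,i}-})\in D_{\sigma_{n,i}-}$ and invoking Remark \ref{rem2.2}(a) exactly as in (\ref{eq2.8}) renders the first piece nonpositive and bounds the second by $|\Pi_{D_{\sigma_{n,i}-}}(Y'^n_{\sigma_{n,i}-})-Y'^n_{\sigma_{n,i}-}|\,|\Delta K^n_{\sigma_{n,i}}|$; summing over $i$ and repeating for the primed jumps yields the two integrands against $d|K^{n,d}|_s$ and $d|K'^{n,d}|_s$.

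Combining the continuous and discontinuous estimates bounds $\langle\bar Y^n_{s-},d\bar K^n_s\rangle$ by the full right-hand integrand, after which Gronwall's lemma and setting $t=0$ finish the proof as in Proposition \ref{prop2.5}. I expect the continuous part to be the only real obstacle, the key insight being to do the bookkeeping through $\Pi_{D_s}(\Pi_{D'_s}(\cdot))$ and $\Pi_{D'_s}(\Pi_{D_s}(\cdot))$ and to simply discard the favourable $-n|P_s-P'_s|^2$ term; the discontinuous part is a direct transcription of the reflected estimate (\ref{eq2.8}), and the remaining steps are routine repetitions of Proposition \ref{prop2.5}.
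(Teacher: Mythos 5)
Your proposal is correct and follows essentially the same route as the paper: Itô's formula for $x\mapsto|x|^p$ as in Proposition \ref{prop2.5}, with the only new work being the bound on $\langle\bar Y^n_{s-},d\bar K^n_s\rangle$, which the paper also obtains by discarding the nonpositive term $-n|(Y^n_s-\Pi_{D_s}(Y^n_s))-(Y'^n_s-\Pi_{D'_s}(Y'^n_s))|^2\,ds$, inserting the double projections $\Pi_{D_{s-}}(\Pi_{D'_{s-}}(\cdot))$ and $\Pi_{D'_{s-}}(\Pi_{D_{s-}}(\cdot))$ for the continuous part, and transcribing the argument of (\ref{eq2.8}) for the jumps. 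No gaps.
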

\begin{proof}
The proof is similar to that of Proposition \ref{prop2.5}. We
first apply It\^o's formula to the function $x\to|x|^p$ and the
semimartingale $Y^n$ to get an analogue of (\ref{eq2.09}). Then we
estimate the terms of the right-hand side of the equality thus
obtained in much the same way as in the proof of Proposition
\ref{prop2.5}, except for an analogue of (\ref{eq2.8}). Now
\[
\int_t^{\sigma}| \bar Y^n_{s-}|^{p-1}\langle\sgn(\bar
Y^n_{s-},d\bar K^n_s\rangle=\int_t^{\sigma}| \bar
Y^n_{s-}|^{p-2}{\bf 1}_{\{Y^n_s\ne Y'^n_s\}}\langle \bar
Y^n_{s-},d\bar K^n_s\rangle
\]
and instead of (\ref{eq2.8}) we have to show that
\begin{align}
\label{eq4.3} \langle \bar Y^n_{s-},d\bar K^n_s\rangle
&\leq|\Pi_{D_{s-}}(\Pi_{D'_{s-}}(Y'^n_{s-}))
-\Pi_{D'_{s-}}(Y'^n_{s-})|d|K^{n,c}|_s\nonumber\\
&\quad+ |\Pi_{D'_{s-}}(\Pi_{D_{s-}}(Y^n_{s-}))
-\Pi_{D_{s-}}(Y^n_{s-})|d|K'^{n,c}|_s\nonumber\\
&\quad+|\Pi_{D_{s-}}(Y'^{n}_{s-})-Y'^{n}_{s-}|\,d|K^{n,d}|_s
+|\Pi_{D'_{s-}}(Y^n_{s-})-Y^n_{s-}|\,d|K'^{n,d}|_s.
\end{align}
To see this, we first observe that
\begin{align*}
\langle\bar Y^n_{s-},d\bar K^{n,c}_s\rangle
&=\langle \bar Y^n_{s-}-\Pi_{D_{s-}}(Y^n_{s-})
+\Pi_{D'_{s-}}(Y'^n_{s-}),d\bar K^{n,c}_s\rangle\\
&\quad+
\langle\Pi_{D_{s-}}(Y^n_{s-})
-\Pi_{D'_{s-}}(Y'^n_{s-}),d\bar K^{n,c}_s\rangle\\
&\leq\langle \Pi_{D_{s-}}(Y^n_{s-})-\Pi_{D'_{s-}}(Y'^n_{s-}),d\bar
K^{n,c}_s\rangle,
\end{align*}
because
\begin{align*}
&\langle \bar Y^n_{s-}-\Pi_{D_{s-}}(Y^n_{s-})
+\Pi_{D'_{s-}}(Y'^n_{s-}),d\bar K^{n,c}_s\rangle\\
&\qquad=-n \langle \bar Y^n_{s-}-\Pi_{D_{s-}}(Y^n_{s-})
+\Pi_{D'_{s-}}(Y'^n_{s-}),\bar Y^n_{s}-\Pi_{D_{s}}(Y^n_{s})
+\Pi_{D'_{s}}(Y'^n_{s})\rangle\,ds\\
&\qquad=-n|\bar Y^n_{s}-\Pi_{D_{s}}(Y^n_{s})
+\Pi_{D'_{s}}(Y'^n_{s})|^2\,ds \leq0.
\end{align*}
By Remark \ref{rem2.2}(b),
\begin{align*}
&\langle \Pi_{D_{s-}}(Y^n_{s-})-\Pi_{D'_{s-}}(Y'^n_{s-}),
dK^{n,c}_s\rangle\\
&\qquad=-n\langle \Pi_{D_{s-}}(Y^n_{s-})
-\Pi_{D_{s-}}(\Pi_{D'_{s-}}(Y'^n_{s-})),Y^n_s-\Pi_{D_{s}}
(Y^n_{s})\rangle\,ds\\
&\qquad\quad+\langle \Pi_{D_{s-}}(\Pi_{D'_{s-}}
(Y'^n_{s-}))-\Pi_{D'_{s-}}(Y'^n_{s}),dK^{n,c}_s\rangle\\
&\qquad\leq \langle \Pi_{D_{s-}}(\Pi_{D'_{s-}}(Y'^n_{s-}))
-\Pi_{D'_{s-}}(Y'^n_{s}),dK^{n,c}_s\rangle.
\end{align*}
Using similar estimate for $\langle
\Pi_{D_{s-}}(Y^n_{s-})-\Pi_{D'_{s-}}
(Y'^n_{s-}),-dK'^{n,c}_s\rangle$ we obtain
\begin{align}
\label{eq4.03} \langle \bar Y^n_{s-},d\bar K^{n,c}_s\rangle
&\leq|\Pi_{D_{s-}}(\Pi_{D'_{s-}}(Y'^n_{s-}))
-\Pi_{D'_{s-}}(Y'^n_{s-})|d|K^{n,c}|_s\nonumber\\
&\quad+|\Pi_{D'_{s-}}(\Pi_{D_{s-}}(Y^n_{s-}))
-\Pi_{D_{s-}}(Y^n_{s-})|d|K'^{n,c}|_s.
\end{align}
On the other hand, by Remark \ref{rem2.2}(a), for
$i=1,2,\dots,k_n$ we have
\begin{align*}
&\langle Y^n_{\sigma_{n,i}-}-\Pi_{D_{\sigma_{n,i}-}}
(Y'^n_{\sigma_{n,i}-}),\Delta K^n_{\sigma_{n,i}}\rangle\\
&\qquad=\langle Y^n_{\sigma_{n,i}-}
-\Pi_{D_{\sigma_{n,i}-}}(Y'^n_{\sigma_{n,i}-}),
\Pi_{D_{\sigma_{n,i}-}}
(Y^n_{\sigma_{n,i}})-Y^n_{\sigma_{n,i}}\rangle\leq0
\end{align*}
and
\[
\langle
Y'^n_{\sigma'_{n,i}-}-\Pi_{D'_{\sigma'_{n,i}-}}
(Y^n_{\sigma'_{n,i}-}),\Delta
K'^n_{\sigma'_{n,i}}\rangle\leq0,
\]
which implies that
\begin{equation}
\label{eq4.04} \langle \bar Y^n_{s-},\Delta \bar
K^n_{s}\rangle\leq\langle \Pi_{D_{s-}}(Y'^n_{s-})-Y'^n_{s-},\Delta
K^n_{s}\rangle + \langle \Pi_{D'_{s-}}(Y^n_{s-})-Y^n_{s-},\Delta
K'^n_{s}\rangle.
\end{equation}
Combining (\ref{eq4.03}) with (\ref{eq4.04}) yields (\ref{eq4.3}).
We leave the details of the rest of the proof to the reader.
\end{proof}

\begin{corollary}
\label{cor4.3} Under the assumptions of Proposition \ref{prop4.2},
if moreover $\DD^{\sigma-}=\DD'^{\sigma-}$, then
\[
E\Big(( \sup_{t<\sigma}|\bar Y^n_t|^p +\int_0^\sigma|\bar
Y^n_s|^{p-2}{\bf 1}_{\{\bar Y^n_s\neq0\}}\|\bar Z^n_s\|^2\,ds
+I^n_{\sigma-}\Big) \leq CE|\bar Y^n_{\sigma-}|^p.
\]
\end{corollary}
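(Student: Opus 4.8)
The plan is to read off the corollary directly from Proposition \ref{prop4.2}, by checking that the extra hypothesis $\DD^{\sigma-}=\DD'^{\sigma-}$ forces all four integral terms on the right-hand side of the estimate there to vanish, so that only $CE|\bar Y^n_{\sigma-}|^p$ survives. Throughout I would use that $\DD^{\sigma-}=\DD'^{\sigma-}$ means precisely $D_t=D'_t$ for $t<\sigma$, and in particular $D_{s-}=D'_{s-}$ for every $s$ in the range of integration $[0,\sigma)$ (together with $D_{\sigma-}=D'_{\sigma-}$).

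First I would dispose of the two continuous terms. Since $D_{s-}=D'_{s-}$ on $[0,\sigma)$, the point $\Pi_{D'_{s-}}(Y'^n_{s-})$ already belongs to $D'_{s-}=D_{s-}$, so projecting it once more onto $D_{s-}$ changes nothing; hence
\[
\Pi_{D_{s-}}(\Pi_{D'_{s-}}(Y'^n_{s-}))-\Pi_{D'_{s-}}(Y'^n_{s-})=0
\]
and the first continuous integrand vanishes identically. Interchanging the roles of $D,D'$ and of $Y^n,Y'^n$ eliminates the second continuous integrand the same way.

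The heart of the matter, and the only step that needs real care, is the treatment of the two discontinuous terms. The measure $d|K^{n,d}|$ is carried by the discretization times $\sigma_{n,i}$, which depend only on the Hausdorff jumps of $t\mapsto D_t\cap B(0,n)$, and $d|K'^{n,d}|$ is carried by the $\sigma'_{n,i}$, depending only on $t\mapsto D'_t\cap B(0,n)$. Because $D_t=D'_t$ for $t<\sigma$, the defining conditions for these stopping times coincide on $[0,\sigma)$, so by induction $\sigma_{n,i}=\sigma'_{n,i}$ as long as they stay below $\sigma$; thus on the interval of integration the two families of jump times agree. The decisive observation is that, by the very construction \mbox{\rm(\ref{eq1.3})}, the left limit of each penalized first component at its own discretization time lies in the corresponding left-limit region: from $\Delta Y^n_{\sigma_{n,i}}=-\Delta K^n_{\sigma_{n,i}}=Y^n_{\sigma_{n,i}}-\Pi_{D_{\sigma_{n,i}-}}(Y^n_{\sigma_{n,i}})$ one reads off $Y^n_{\sigma_{n,i}-}=\Pi_{D_{\sigma_{n,i}-}}(Y^n_{\sigma_{n,i}})\in D_{\sigma_{n,i}-}$, and likewise $Y'^n_{\sigma'_{n,i}-}=\Pi_{D'_{\sigma'_{n,i}-}}(Y'^n_{\sigma'_{n,i}})\in D'_{\sigma'_{n,i}-}$.

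Putting these together completes the proof. At a common jump time $\sigma_{n,i}=\sigma'_{n,i}<\sigma$ one has $Y'^n_{\sigma_{n,i}-}\in D'_{\sigma_{n,i}-}=D_{\sigma_{n,i}-}$, whence $\Pi_{D_{\sigma_{n,i}-}}(Y'^n_{\sigma_{n,i}-})=Y'^n_{\sigma_{n,i}-}$ and the integrand of the first discontinuous term vanishes at every atom of $d|K^{n,d}|$ lying in $[0,\sigma)$; the second discontinuous term is annihilated symmetrically. Since all four integrals are then zero, Proposition \ref{prop4.2} reduces to $E(\sup_{t<\sigma}|\bar Y^n_t|^p+\int_0^\sigma|\bar Y^n_s|^{p-2}{\bf 1}_{\{\bar Y^n_s\neq0\}}\|\bar Z^n_s\|^2\,ds+I^n_{\sigma-})\le CE|\bar Y^n_{\sigma-}|^p$, which is exactly the assertion.
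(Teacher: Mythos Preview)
Your proof is correct and follows exactly the route the paper intends: the paper's own proof is the single line ``Follows immediately from Proposition \ref{prop4.2}'', and you have simply unpacked why each of the four integral terms on the right-hand side of that proposition vanishes under $\DD^{\sigma-}=\DD'^{\sigma-}$. Your treatment of the discontinuous terms---matching the discretization times on $[0,\sigma)$ and using $Y'^n_{\sigma'_{n,i}-}=\Pi_{D'_{\sigma'_{n,i}-}}(Y'^n_{\sigma'_{n,i}})\in D'_{\sigma'_{n,i}-}$ from (\ref{eq1.3})---is the natural way to make the ``immediate'' step explicit, and it relies on the same observation the paper uses in deriving (\ref{eq4.04}).
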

\begin{proof} Follows immediately from Proposition \ref{prop4.2}.
\end{proof}
\medskip

Note that in Propositions \ref{prop4.1}, \ref{prop4.2}  and
Corollary \ref{cor4.3} we do not assume that  $\xi\in D_T$,
$\xi'\in D'_T$.

We now turn to the approximation of local RBSDEs. Let
$\tau,\sigma$ be stopping times such that $0\leq\tau\leq\sigma\leq
T$, Let $D,D'$ be $\FF_\tau$-measurable random convex sets with
nonempty interiors and let $\zeta,\zeta'\in L^2$ be
$\FF_\sigma$-measurable random variables (we do not assume neither
that $\zeta\in D$ $P$-a.s. nor that $\zeta'\in D'$ $P$-a.s.). We
consider approximations  of the form
\begin{equation}
\label{eq4.4}
Y^n_t=\zeta+\int^{\sigma}_tf(s,Y^n_s,Z^n_s)\,ds
-\int^{\sigma}_tZ^n_s\,dW_s +K^n_{\sigma}-K^n_t,\quad
t\in[\tau,\sigma]
\end{equation}
and
\begin{equation}
\label{eq4.5} Y'^n_t=\zeta'+\int^{\sigma}_tf(s,Y'^n_s,Z'^n_s)\,ds
-\int^{\sigma}_tZ'^n_s\,dW_s +K'^{n}_{\sigma}-K'^{n}_t,\quad
t\in[\tau,\sigma],
\end{equation}
where
\begin{equation}
\label{eq4.8}
K^n_t=-n\int_{\tau}^t(Y^n_s-\Pi_{D}(Y^n_s))\,ds,\quad
K'^{n}_t=-n\int^t_{\tau}(Y'^n_s-\Pi_{D'}(Y'^n_s))\,ds, \quad
t\in[\tau,\sigma].
\end{equation}

\begin{corollary}
\label{cor4.4} Assume \mbox{\rm(H1${}^*$)--(H4${}^*$)}. Let
$(Y^n,Z^n,K^n-K^n_\tau)$ be a solution of \mbox{\rm(\ref{eq4.4})}
such that $\sup_{\tau\leq t\leq\sigma}|Y^n_t|\in L^2$. Then there
exists $C>0$ depending only on $\mu,\lambda,T$ such that for any
$n\in\N$
\[
E\Big(\sup_{\tau\leq t\leq\sigma}|Y^n_t|^2
+\int_\tau^\sigma\|Z^n_s\|^2\,ds\,|\,\FF_{\tau}\Big) \leq C
E\Big(|\zeta|^2+|A|^2
+\int_\tau^\sigma|f(s,0,0)|^2\,ds\,|\,\FF_{\tau}\Big)
\]
and
\[
E(|K^n|_\tau^\sigma\,|\,\FF_\tau) \le C(\dist(A,\partial D))^{-1}
E\Big(|\zeta|^2+|A|^2
+\int_\tau^\sigma|f(s,0,0)|^2\,ds\,|\,\FF_{\tau}\Big).
\]
\end{corollary}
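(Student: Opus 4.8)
The plan is to repeat, with conditional expectations $E(\cdot\,|\,\FF_\tau)$ in place of ordinary ones, the argument establishing the a priori bounds of Proposition \ref{prop2.4}, exactly as was done for the reflected local equation in Proposition \ref{cor3.4}. The only feature distinguishing \eqref{eq4.4} from the reflected equation is that the reflection term is replaced by the explicit penalty $K^n_t=-n\int_\tau^t(Y^n_s-\Pi_D(Y^n_s))\,ds$; the entire argument goes through once I show that this penalty obeys the same Mokobodzki-type inequality against $A$ that the reflection term obeys in \eqref{eq2.02}.

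First I would record the key inequality
\[
\int_\tau^t\langle Y^n_s-A,dK^n_s\rangle\leq-\int_\tau^t\dist(A,\partial D)\,d|K^n|_s,\qquad t\in[\tau,\sigma],
\]
which is the local, fixed-$D$ analogue of \eqref{eq4.1}. Since $dK^n_s=-n(Y^n_s-\Pi_D(Y^n_s))\,ds$ and $d|K^n|_s=n|Y^n_s-\Pi_D(Y^n_s)|\,ds$, this reduces to the pointwise bound $\langle Y^n_s-A,\Pi_D(Y^n_s)-Y^n_s\rangle\leq-\dist(A,\partial D)|\Pi_D(Y^n_s)-Y^n_s|$ on the set $\{Y^n_s\notin D\}$, which is precisely Remark \ref{rem2.2}(c) applied with $x=Y^n_s$, $y=\Pi_D(Y^n_s)$ and $a=A$. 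Here it is essential that $A$ and $\dist(A,\partial D)$ are $\FF_\tau$-measurable and that, by \mbox{\rm(H4${}^*$)}, $\dist(A,\partial D)>0$ $P$-a.s.

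With this in hand the computation is in fact lighter than in Proposition \ref{prop2.4}: because $K^n$ is absolutely continuous, $Y^n$ is continuous and all jump contributions vanish, and because $A$ is a single $\FF_\tau$-measurable vector, constant in $t$ on $[\tau,\sigma]$, there are no $[A]$, $\int\langle K,dA\rangle$ or $\sum\langle\Delta K,\Delta A\rangle$ terms to handle. Applying It\^o's formula to $|Y^n_t|^2$ on $[t,\sigma]$ with terminal value $\zeta$, estimating the drift via \mbox{\rm(H3${}^*$)} through $2\langle Y^n_s,f(s,Y^n_s,Z^n_s)\rangle\leq C|Y^n_s|^2+|f(s,0,0)|^2+\frac12\|Z^n_s\|^2$, bounding the $K^n$ term by the displayed inequality together with $\int_\tau^\sigma\langle A,dK^n_s\rangle=\langle A,K^n_\sigma-K^n_\tau\rangle$, taking $E(\cdot\,|\,\FF_\tau)$ (the stochastic integral being a martingale after the usual localization by $\tau_n=\inf\{t>\tau;\int_\tau^t\|Z^n_s\|^2\,ds>n\}\wedge\sigma$), and finally applying the Burkholder--Davis--Gundy inequality and Gronwall's lemma yields the first estimate. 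The second estimate follows by keeping $\int_\tau^\sigma\dist(A,\partial D)\,d|K^n|_s$ on the left, dividing by the strictly positive $\FF_\tau$-measurable factor $\dist(A,\partial D)$, and using $|K^n|_\tau^\sigma=n\int_\tau^\sigma|Y^n_s-\Pi_D(Y^n_s)|\,ds$.

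The only point demanding genuine care is the verification of the displayed Mokobodzki-type inequality for the penalty term, which is where the projection geometry of Remark \ref{rem2.2}(c) enters; once that is in place the remainder is a routine, and in fact lighter, rerun of the proof of Proposition \ref{prop2.4} in conditional form, so I would leave those standard manipulations to the reader.
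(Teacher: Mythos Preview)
Your proposal is correct and follows essentially the same approach as the paper: the paper's proof is the one-liner ``Follows from the proof of Proposition \ref{prop4.1}'', and Proposition \ref{prop4.1} is itself proved by repeating the argument of Proposition \ref{prop2.4} after establishing the Mokobodzki-type inequality \eqref{eq4.1} for the penalty term via Remark \ref{rem2.2}(c), which is exactly what you do. Your observation that the constant-$A$, continuous-$K^n$ setting makes the computation lighter (no $[A]$, $\int\langle K,dA\rangle$, or jump terms) is an accurate refinement of the paper's terse reference.
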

\begin{proof}
Follows from the proof of Proposition \ref{prop4.1}.
\end{proof}

\begin{corollary}
\label{cor4.5} let $(Y^n,Z^n,K^n)$, $(Y'^n,Z'^n,K'^n)$ be
solutions of \mbox{\rm(\ref{eq4.4})} and \mbox{\rm(\ref{eq4.5})},
respectively, such that $\sup_{\tau\leq t\leq\sigma}
|Y^n_t|,\sup_{\tau\leq t\leq\sigma}|Y'^n_t|\in L^2$. If $f$
satisfies \mbox{\rm(H2)} then there exists $C>0$ depending only on
$\mu,\lambda,T$ such that for any $n\in\N$,
\begin{align*}
&E\big(\sup_{\tau\leq t\leq\sigma}|Y^n_t-Y'^n_t|^2
+\int_\tau^\sigma\|Z^n_s-Z'^n_s\|^2\,ds\,|\,\FF_{\tau}\big)\\
&\qquad\le C\Big(E(|\zeta-\zeta'|^2\,|\,\FF_{\tau})
+E(\sup_{\tau\leq t\leq\sigma}
|\Pi_D(\Pi_{D'}(Y'^n_{t-}))-\Pi_{D'}(Y'^n_{t-})|\,
|K^n|_\tau^\sigma\,|\,\FF_{\tau})\\
&\qquad\quad+E(\sup_{\tau\leq t\leq\sigma}
|\Pi_{D'}(\Pi_D(Y^n_{t-}))-\Pi_D(Y^n_{t-})|\,|K'^n|_\tau^\sigma\,
|\,\FF_{\tau})\Big).
\end{align*}
\end{corollary}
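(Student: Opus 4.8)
The plan is to repeat, in the local and conditional form, the argument proving Proposition~\ref{prop4.2} specialized to $p=2$. The essential simplification is that here $D$ and $D'$ are \emph{fixed} convex sets on $[\tau,\sigma]$, so that the penalization processes $K^n$, $K'^n$ defined in \eqref{eq4.8} are absolutely continuous and have no jump part; hence only the continuous-part estimate from the proof of Proposition~\ref{prop4.2} is needed, which is why no $K^{n,d}$, $K'^{n,d}$ terms occur in the statement. Set $\bar Y^n=Y^n-Y'^n$, $\bar Z^n=Z^n-Z'^n$ and $\bar K^n=K^n-K'^n$, and apply It\^o's formula to $s\mapsto|\bar Y^n_s|^2$ on $[\tau,\sigma]$. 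Using the Lipschitz condition on $f$ to dominate the drift coming from $f(s,Y^n_s,Z^n_s)-f(s,Y'^n_s,Z'^n_s)$, one bounds that contribution by $C_1\int_\tau^\sigma|\bar Y^n_s|^2\,ds+\tfrac12\int_\tau^\sigma\|\bar Z^n_s\|^2\,ds$, the second summand absorbing half of the $\|\bar Z^n\|^2$-term on the left-hand side.

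The heart of the argument is the pointwise reflection estimate
\[
\langle\bar Y^n_s,d\bar K^n_s\rangle
\le|\Pi_D(\Pi_{D'}(Y'^n_s))-\Pi_{D'}(Y'^n_s)|\,d|K^n|_s
+|\Pi_{D'}(\Pi_D(Y^n_s))-\Pi_D(Y^n_s)|\,d|K'^n|_s,
\]
which is the local version of the continuous-part bound \eqref{eq4.03}. I would derive it exactly as there: since $d\bar K^n_s=-n\big(\bar Y^n_s-\Pi_D(Y^n_s)+\Pi_{D'}(Y'^n_s)\big)\,ds$, the splitting
\[
\langle\bar Y^n_s,d\bar K^n_s\rangle
=\langle\bar Y^n_s-\Pi_D(Y^n_s)+\Pi_{D'}(Y'^n_s),d\bar K^n_s\rangle
+\langle\Pi_D(Y^n_s)-\Pi_{D'}(Y'^n_s),d\bar K^n_s\rangle
\]
makes the first summand equal to $-n|\bar Y^n_s-\Pi_D(Y^n_s)+\Pi_{D'}(Y'^n_s)|^2\,ds\le0$, while the second is treated by the projection inequalities of Remark~\ref{rem2.2} applied separately to $dK^n_s$ and to $-dK'^n_s$, each step inserting an intermediate projection and leaving precisely the two double-projection quantities above.

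It then remains to take the conditional expectation $E(\,\cdot\,|\,\FF_\tau)$. Since $\sup_{\tau\le t\le\sigma}|Y^n_t|$ and $\sup_{\tau\le t\le\sigma}|Y'^n_t|$ belong to $L^2$, the stochastic integral $\int_\tau^{\cdot}\langle\bar Y^n_s,\bar Z^n_s\,dW_s\rangle$ is a genuine (conditional) martingale that drops out, the terminal term yields $E(|\zeta-\zeta'|^2\,|\,\FF_\tau)$, and the two reflection integrals are dominated by $\sup_{\tau\le t\le\sigma}|\Pi_D(\Pi_{D'}(Y'^n_{t-}))-\Pi_{D'}(Y'^n_{t-})|\,|K^n|_\tau^\sigma$ together with the symmetric quantity. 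Closing with the conditional Burkholder--Davis--Gundy and Gronwall steps already used to pass from the raw inequality to \eqref{eq2.6} then gives the asserted bound. The only genuinely delicate point is the reflection estimate displayed above; once the splitting and the projection inequalities of Remark~\ref{rem2.2} are in place, the remaining conditional Gronwall argument is routine, exactly as in Propositions~\ref{prop2.4} and~\ref{prop2.5}.
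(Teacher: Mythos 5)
Your proposal is correct and follows essentially the same route as the paper, whose proof of this corollary is simply a reference back to the argument of Proposition~\ref{prop4.2}: you correctly observe that with $D$, $D'$ fixed the penalization processes in \eqref{eq4.8} are absolutely continuous, so only the continuous-part reflection estimate \eqref{eq4.03} is needed and the $K^{n,d}$, $K'^{n,d}$ terms disappear. The splitting of $\langle\bar Y^n_s,d\bar K^n_s\rangle$ and the insertion of the intermediate projections are exactly the paper's computation, and the concluding conditional Gronwall step is the standard one from Propositions~\ref{prop2.4} and~\ref{prop2.5}.
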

\begin{proof}
Follows from the proof of Proposition \ref{prop4.2}.
\end{proof}

\begin{proposition}
\label{prop4.6} Assume \mbox{\rm(H1${}^*$)--(H4${}^*$)} and
\mbox{\rm(\ref{eq3.4})}. Then
\[
\sup_{\tau\leq t\leq \sigma}|Y^n_t-Y_t|\arrowp0,
\quad\int_\tau^\sigma||Z^n_s-Z_s||^2\,ds\arrowp0,\quad
\sup_{\tau\leq t\leq \sigma}|K^n_t-K_t|\arrowp0,
\]
where  $(Y,Z,K)$ is a unique solution of the local RBSDE
\mbox{\rm(\ref{eq3.2})}.
\end{proposition}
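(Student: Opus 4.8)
The plan is to combine the a priori estimates of Corollary \ref{cor4.4} with a comparison of the penalized triple against the unique solution $(Y,Z,K)$ of the local RBSDE \eqref{eq3.2} furnished by Proposition \ref{prop3.6}. By Corollary \ref{cor4.4} the family $\{(Y^n,Z^n,K^n)\}$ is bounded in $L^2$, i.e. $\sup_n E(\sup_{\tau\le t\le\sigma}|Y^n_t|^2+\int_\tau^\sigma\|Z^n_s\|^2\,ds+|K^n|_\tau^\sigma\,|\,\FF_\tau)<\infty$. Since the local penalization \eqref{eq4.8} has no jump part, one has the exact identity $|K^n|_\tau^\sigma=n\int_\tau^\sigma\dist(Y^n_s,D)\,ds$; hence $E\int_\tau^\sigma\dist(Y^n_s,D)\,ds\le C/n$, so that $\dist(Y^n_\cdot,D)\to0$ in $L^1(ds\times dP)$, and (keeping the good penalization term in the Itô computation below) even in $L^2(ds\times dP)$.

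The main estimate comes from applying It\^o's formula to $|Y^n_t-Y_t|^2$ on $[t,\sigma]$, using $Y^n_\sigma=Y_\sigma=\zeta$. The only nonstandard term is the pairing $\langle Y^n_s-Y_s,dK^n_s-dK_s\rangle$. Writing $Y^n_s-Y_s=(Y^n_s-\Pi_D(Y^n_s))+(\Pi_D(Y^n_s)-Y_s)$ and invoking Remark \ref{rem2.2}(a) (with $\Pi_D(Y^n_s),Y_s\in D$) I would obtain $\langle Y^n_s-Y_s,dK^n_s\rangle\le-n\dist(Y^n_s,D)^2\,ds\le0$, while $dK_s=\mathbf{n}_{Y_s}\,d|K|_s$ with $|\mathbf{n}_{Y_s}|=1$ gives $-\langle Y^n_s-Y_s,dK_s\rangle\le\dist(Y^n_s,D)\,d|K|_s$ (the boundary part $\langle Y_s-\Pi_D(Y^n_s),dK_s\rangle\le0$ again by Remark \ref{rem2.2}(a)). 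Combining this with (H3${}^*$) and Gronwall's lemma yields
\[
E\big(\sup_{\tau\le t\le\sigma}|Y^n_t-Y_t|^2+\int_\tau^\sigma\|Z^n_s-Z_s\|^2\,ds\big)\le C\,E\int_\tau^\sigma\dist(Y^n_s,D)\,d|K|_s.
\]

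Everything thus reduces to showing $R_n:=E\int_\tau^\sigma\dist(Y^n_s,D)\,d|K|_s\to0$, and this is where the genuine difficulty lies: the $L^1(ds)$-smallness above does not transfer to the measure $d|K|$, which is singular with respect to $ds$ (it charges only $\{Y_s\in\partial D\}$), and the crude bound $R_n\le(E\sup|Y^n-Y|^2)^{1/2}(E(|K|_\tau^\sigma)^2)^{1/2}$ fed back into the displayed inequality is self-referential and yields only boundedness. To break the circle I would first prove $\sup_{\tau\le t\le\sigma}|Y^n_t-Y_t|\arrowp0$ by a separate compactness-and-identification argument: the uniform bounds make $\{(Y^n,Z^n,K^n)\}$ relatively compact in probability in $\SSS\times\PP\times\SSS$, any subsequential limit lies in $D$ (from $E\int_\tau^\sigma\dist(Y^n_s,D)^2\,ds\to0$) and satisfies the minimality condition (b) of Definition \ref{def3.2}, because $\int_\tau^\sigma\langle Y^n_s-X_s,dK^n_s\rangle=n\int_\tau^\sigma\langle Y^n_s-X_s,\Pi_D(Y^n_s)-Y^n_s\rangle\,ds\le0$ by Remark \ref{rem2.2}(a) for every $X$ with values in $D$, an inequality stable under the limit. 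The uniqueness in Proposition \ref{prop3.6} then forces every subsequential limit to be $(Y,Z,K)$, so $Y^n\arrowp Y$ uniformly. Once this is in hand, $\dist(Y^n_s,D)\le|Y^n_s-Y_s|\le\sup_{\tau\le u\le\sigma}|Y^n_u-Y_u|$ gives $R_n\le E(\sup_{\tau\le t\le\sigma}|Y^n_t-Y_t|\cdot|K|_\tau^\sigma)\to0$ by the $L^2$-bound on $|K|_\tau^\sigma$ and uniform integrability; the displayed inequality then delivers $\int_\tau^\sigma\|Z^n_s-Z_s\|^2\,ds\arrowp0$, and finally $\sup_{\tau\le t\le\sigma}|K^n_t-K_t|\arrowp0$ follows by expressing $K^n_t-K_t$ through the two equations in terms of $Y^n-Y$, the Lipschitz drift and $\int(Z^n_s-Z_s)\,dW_s$, the latter controlled by the Burkholder--Davis--Gundy inequality.

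I expect the true obstacle to be precisely this limit identification of the reflection term: establishing enough compactness of the bounded-variation processes $K^n$ and justifying the passage to the limit in $\int_\tau^\sigma\langle Y^n_s-X_s,dK^n_s\rangle$ so that condition (b) of Definition \ref{def3.2} survives. Because of the $ds$/$d|K|$ mismatch a purely quantitative closure is not available, and it is the qualitative identification that carries the proof.
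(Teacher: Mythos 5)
Your reduction of the problem to showing $R_n=E\int_\tau^\sigma\dist(Y^n_s,D)\,d|K|_s\to0$ is sound (the inequalities $\langle Y^n_s-Y_s,dK^n_s\rangle\le -n\,\dist(Y^n_s,D)^2\,ds$ and $-\langle Y^n_s-Y_s,dK_s\rangle\le\dist(Y^n_s,D)\,d|K|_s$ are both correct consequences of Remark \ref{rem2.2}), and you have correctly located the difficulty in the mismatch between the $ds$-smallness of $\dist(Y^n_\cdot,D)$ and the singular measure $d|K|$. But the step you propose to break the circle does not work as stated: uniform $L^2$-bounds on $\sup_t|Y^n_t|$, $\int\|Z^n_s\|^2ds$ and $|K^n|_\tau^\sigma$ do \emph{not} give relative compactness of $\{(Y^n,Z^n,K^n)\}$ in ${\cal S}\times{\cal P}\times{\cal S}$; bounded sets in these metric spaces are far from precompact, and at best one extracts weakly convergent subsequences of $Z^n$ and of the measures $dK^n$. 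Weak convergence is not enough to pass to the limit in $f(s,Y^n_s,Z^n_s)$ (only Lipschitz in $z$), nor in the bilinear expression $\int\langle Y^n_{s}-X_{s},dK^n_s\rangle$, nor to conclude uniform convergence of $Y^n$. So the ``compactness-and-identification'' argument is precisely the missing proof, not a proof; as you yourself note, the qualitative identification ``carries the proof'', and it is left unestablished. What is actually needed to close your argument quantitatively is a bound on $E\sup_{\tau\le t\le\sigma}\dist(Y^n_t,D)^2$ (not merely its time integral), which is the classical hard point of the penalization method in the multidimensional, non-comparable setting.

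The paper sidesteps this entirely. Its proof first treats a \emph{deterministic} convex set $D=G$, where the convergence of the penalization scheme in ${\SSS}^2\times{\PP}^2\times{\SSS}^2$ is quoted from the literature on RBSDEs in a fixed convex domain (applied to the coefficient $g=f{\bf 1}_{[0,\sigma[}$, so that the global statement localizes to $[\tau,\sigma]$). The random case is then reduced to this one exactly as in Proposition \ref{prop3.6}: approximate $D$ by $D^j=\sum_i G_i{\bf 1}_{C_i^j}$ with $C_i^j\in\FF_\tau$ and deterministic polyhedra $G_i$, so that on each $C^j_i$ the penalized equation is one with a fixed set; the error between $(Y^n,Z^n,K^n)$ and $(Y^{j,n},Z^{j,n},K^{j,n})$ is controlled \emph{uniformly in} $n$ by Corollary \ref{cor4.5} through $\rho(D,D^j)\le 2/j$ times the conditional bounds on $|K^n|_\tau^\sigma$, $|K^{j,n}|_\tau^\sigma$ from Corollary \ref{cor4.4}, and a $\lim_j\limsup_n$ diagonal argument concludes. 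If you want a self-contained proof along your lines, you would have to reprove the deterministic-domain penalization theorem, i.e.\ supply the uniform-in-time estimate on the penetration $\dist(Y^n_t,D)$; otherwise the reduction-to-the-deterministic-case route is the one that actually closes.
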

\begin{proof}
First  set $D=G$ for some fixed convex set with nonempty interior.
Consider approximations of the form
\[
Y^n_t=\zeta+\int_t^Tg(s,Y^n_s,Z^n_s)\,ds
-\int_t^TZ^n_s\,dW_s+K^n_T-K^n_t,\quad t\in[0,T]
\]
where $g(s,\cdot,\cdot)=f(s,\cdot,\cdot){\bf 1}_{[0,\sigma[}(s)$.
By \cite[Theorem 5.9]{PP}, $(Y^n,Z^n,K^n)\rightarrow(Y,Z,K)$ in
${\SSS}^2\times{\PP}^2\times{\SSS}^2$, where $(Y,Z,K)$ is a
solution of  RBSDEs of the form (\ref{eq3d1}) in $G$. Since
$Y^n_t=Y_t=\zeta$, $Z^n_t=Z_t=0$  and $K^n_T=K^n_t$ for $t\geq
\sigma$, it is clear that  for any $\tau\leq\sigma$,
$(Y^n,Z^n,K^n)$  converges in ${\cal S}^2\times{\cal P}^2
\times{\cal S}^2$ to the solution of our local RBSDE on
$[\tau,\sigma]$.

Now let us define $D^j,\zeta^j,A^j$, $j\in\N$ as in the proof of
Proposition \ref{prop3.6} (observe that $|\zeta^j|\leq N$ and
$|A^j|\leq N$, $j\in\N$) and by $(Y^{j,n},Z^{j,n},K^{j,n})$ denote
a solution  of the local BSDE
\[
Y^{j,n}_t=\zeta^j+\int^{\sigma}_tf(s,Y^{j,n}_s,Z^{j,n}_s)\,ds
-\int^{\sigma}_tZ^{j,n}_s\,dW_s
-n\int_t^{\sigma}(Y^{j,n}_s-\Pi_{D^j}(Y^{j,n}_s))\,ds,\quad
t\in[\tau,\sigma].
\]
Using the first part of the proof and arguments from the proof of
Proposition \ref{prop3.6} one can show that for every
$j\in\N$,
\begin{equation}\label{eq4.6}
(Y^{j,n},Z^{j,n},K^{j,n})\rightarrow
(Y^{(j)},Z^{(j)},K^{(j)})\quad\mbox{\rm in }{\cal S}\times{\cal
P}\times{\cal S},
\end{equation}
where $(Y^{(j)},Z^{(j)},K^{(j)})$ is a solution of the local RBSDE
in $D^j$ with terminal value $\zeta^j$. Since
$|\zeta^j-\zeta|\leq2/j$ and $\rho(D^j,D)\leq2/j$, from Corollary
\ref{cor4.5} it follows that
\begin{align}
\label{eq4.09}
&E\big(\sup_{\tau\leq t\leq\sigma}|Y^n_t-Y^{j,n}_t|^2
+\int_\tau^\sigma\|Z^n_s-Z^{j,n}_s\|^2\,ds\,|\,\FF_\tau)\nonumber\\
&\qquad\le C\big(E(|\zeta-\zeta^{j}|^2\,|\,\FF_{\tau})
+\rho(D,D^{j})E(|K^n|_\tau^\sigma
+|K^{j,n}|_\tau^\sigma\,|\,\FF_{\tau})\big)\nonumber\\
&\qquad\le C\big(\frac4{j^2} +\frac2{j}E(|K^n|_\tau^\sigma
+|K^{j,n}|_\tau^\sigma\,|\,\FF_{\tau})\big).
\end{align}
By Corollary \ref{eq4.3},
\[
E(|K^n|_\tau^\sigma\,|\,\FF_\tau) \le C(\dist(A,\partial D))^{-1}
E\Big(N^2+\int_\tau^\sigma|f(s,0,0)|^2\,ds\,|\,\FF_{\tau}\Big).
\]
Using once again  Corollary \ref{cor4.3} and the fact that on the
set $\{\dist(A,\partial D)>1/j\}$ we have $\dist(A^j,\partial
D^j)>\dist(A,\partial D)-1/j$, we conclude that
\[
E(|K^{j,n}|_\tau^\sigma\,|\,\FF_{\tau}) \le  C(\dist(A,\partial
D)-\frac1{j})^{-1}
E\Big(N^2+\int_\tau^\sigma|f(s,0,0)|^2\,ds\,|\,\FF_{\tau}\Big)
\]
on $\{\dist(A,\partial D)>1/j\}$.  Since $P(\dist(A,\partial
D)>1/j)\uparrow1$, it follows from (\ref{eq4.09}) that for every
$\varepsilon>0$,
\begin{equation}
\label{eq4.7} \lim_{j\to\infty}\limsup_{n\to\infty}
P\big(E\big(\sup_{\tau\leq t\leq\sigma}|Y^n_t-Y^{j,n}_t|^2
+\int_\tau^\sigma\|Z^n_s-Z^{j,n}_s\|^2\,ds\,|\,\FF_\tau\big)
\geq\varepsilon\big)=0.
\end{equation}
Combining (\ref{eq4.6}) with (\ref{eq4.7}) and the fact that
$\{(Y^{(j)},Z^{(j)},K^{(j)})\}$ converges in ${\cal S}\times{\cal
P}\times{\cal S}$ to the solution  $(Y,Z,K)$ of the local RBSDE in
$D$ we get the desired convergence results.
\end{proof}

\begin{remark}
\label{rem4.7} Proposition \ref{prop4.6} may be slightly
generalized to encompass different terminal values in the
approximation sequence. More precisely, let $\zeta_n\in L^2$ be a
sequence of $\FF_\sigma$-measurable random variables such that
$\zeta_n\to\zeta$ in $L^2$, where $\zeta\in D$ $P$-a.s. Let us
define $(\tilde Y^n,\tilde Z^n,\tilde K^n)$ by (\ref{eq4.5}),
(\ref{eq4.8}) but with $\zeta'$ replaced by $\zeta_n$. Then
Proposition \ref{prop4.6} holds true with $(Y^n,Z^n,K^n)$ replaced
by $(\tilde Y^n,\tilde Z^n,\tilde K^n)$. To see this it suffices
to observe in Corollary \ref{cor4.5} we do not assume that
$\zeta\in D$, $\zeta'\in D'$. Therefore for any $n\in\N$,
\[
E\Big(\sup_{\tau\leq t\leq\sigma}|Y^n_t-\tilde Y^n_t|^2
+\int_\tau^\sigma\|Z^n_s-\tilde Z^n_s\|^2\,ds\,|\,\FF_{\tau}\Big)
\le C E(|\zeta-\zeta_n|^2\,|\,\FF_{\tau}),
\]
which leads to the desired conclusion.
\end{remark}

\begin{proposition}
\label{prop4.8} Assume \mbox{\rm(H1${}^*$)--(H4${}^*$)}. Let
$0=\sigma_0\leq\sigma_1\leq\dots\leq\sigma_{k+1}=T$ be stopping
times and let $D^0,D^1,\dots,D^{k}$  be random closed convex
subsets in $\R^m$ such that $D^i$ is $\FF_{\sigma_i}$-measurable
and there is $m\in\N$ such that $D^i\subset B(0,N)$,
$i=1,\dots,k$. Let $(Y,Z,K)$ be a unique solution of RBSDE
\mbox{\rm(\ref{eq1.1})} in $\{D_t;\,t\in[0,T]\}$ such that
$D_t=D^{i-1}$, $t\in[\sigma_{i-1},\sigma_i[$, $i=1,\dots,{k+1}$,
$D_T=D_{T-}$\,, and let $(Y^n,Z^n,K^n)$  be a solution of
\mbox{\rm(\ref{eq1.2})}. Then
\[
(Y^n,Z^n,K^n)\arrowp (Y,Z,K)\quad in\,\,\,{\cal S}\times{\cal
P}\times{\cal S}.
\]
\end{proposition}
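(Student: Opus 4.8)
The plan is to reduce the global convergence to the interval-by-interval convergence of Proposition \ref{prop4.6}, exploiting that on the piecewise-constant region $\DD$ the modified penalization degenerates, for large $n$, into an ordinary jump-free penalization on each $[\sigma_{i-1},\sigma_i)$. First I would pin down the breakpoints of the scheme. Since $D^i\subset B(0,N)$, for $n\ge N$ one has $D_t\cap B(0,n)=D_t$, so $\rho(D_t\cap B(0,n),D_{t-}\cap B(0,n))=\rho(D_{t-},D_t)$, which is positive only at those finitely many $\sigma_i$ with $D^{i-1}\neq D^i$. Putting $r=\min_i\rho(D^{i-1},D^i){\bf 1}_{\{D^{i-1}\neq D^i\}}>0$ a.s., on the event $\{n\ge N,\,1/n<r\}$—whose probability tends to $1$—the penalization breakpoints $\{\sigma_{n,i}\}$ coincide exactly with the genuine jumps of $\DD$. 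Hence on this event $K^{n,d}$ jumps only at the $\sigma_i$, with $\Delta K^n_{\sigma_i}=\Pi_{D^{i-1}}(Y^n_{\sigma_i})-Y^n_{\sigma_i}$, and on each $(\sigma_{i-1},\sigma_i)$ the triple $(Y^n,Z^n,K^{n,c})$ solves the local penalized BSDE (\ref{eq4.4})--(\ref{eq4.8}) in the constant set $D^{i-1}$ with terminal value $\Pi_{D^{i-1}}(Y^n_{\sigma_i})=Y^n_{\sigma_i-}$, matching the structure of Proposition \ref{prop3.2}.

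Then I would run a backward induction, the step passing from convergence on $[\sigma_i,T]$ to convergence on $[\sigma_{i-1},T]$ in $\SSS\times\PP\times\SSS$, the base case being the last interval $[\sigma_k,T]$. There the region is the constant $D^k$, the terminal value is $\xi\in D^k$, and the scheme is a pure penalization, so Proposition \ref{prop4.6} (with the $\FF_{\sigma_k}$-measurable interior point $A_{\sigma_k}$ and the boundedness (\ref{eq3.4})) yields the base case. For the step, assume convergence on $[\sigma_i,T]$; evaluating the $\SSS$-convergence at $\sigma_i$ gives $Y^n_{\sigma_i}\arrowp Y_{\sigma_i}$, whence $\Pi_{D^{i-1}}(Y^n_{\sigma_i})\arrowp\Pi_{D^{i-1}}(Y_{\sigma_i})=Y_{\sigma_i-}\in D^{i-1}$ by continuity of the projection and Proposition \ref{prop3.2}.

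I would introduce the auxiliary penalized triple $(\tilde Y^n,\tilde Z^n,\tilde K^n)$ on $[\sigma_{i-1},\sigma_i)$ with the \emph{fixed} terminal value $Y_{\sigma_i-}$; by Proposition \ref{prop4.6} it converges in probability to the local RBSDE solution in $D^{i-1}$ with terminal $Y_{\sigma_i-}$, which by Proposition \ref{prop3.2} is exactly $(Y,Z,K)$ restricted to $[\sigma_{i-1},\sigma_i)$. Comparing $(Y^n,Z^n)$ with $(\tilde Y^n,\tilde Z^n)$ through Corollary \ref{cor4.5} with $D=D'=D^{i-1}$—so that the two projection terms on the right vanish by idempotence of $\Pi_{D^{i-1}}$—and using that $\Pi_{D^{i-1}}$ is $1$-Lipschitz gives
\[
E\big(\sup_{\sigma_{i-1}\le t<\sigma_i}|Y^n_t-\tilde Y^n_t|^2
+\int_{\sigma_{i-1}}^{\sigma_i}\|Z^n_s-\tilde Z^n_s\|^2\,ds\,|\,\FF_{\sigma_{i-1}}\big)
\le C\,E\big(|Y^n_{\sigma_i}-Y_{\sigma_i}|^2\,|\,\FF_{\sigma_{i-1}}\big).
\]

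Closing the induction requires the right-hand side to tend to $0$ in probability, and this is where the main obstacle lies: $Y^n_{\sigma_i}\to Y_{\sigma_i}$ holds only in probability, so one cannot pass directly to the conditional $L^2$-distance. The remedy is the uniform a priori bound of Proposition \ref{prop4.1}, which gives $\sup_nE(Y^{n,*}_T)^2<\infty$; combined with convergence in probability this yields $E|Y^n_{\sigma_i}-Y_{\sigma_i}|^p\to0$ for every $p<2$ by uniform integrability, and one runs the whole comparison with the $p$-version of the estimate (Proposition \ref{prop4.2} with $p<2$), exactly as in Step 2 of the proof of Theorem \ref{thm3.1}. Markov's inequality applied to the conditional bound then gives $\sup_{[\sigma_{i-1},\sigma_i)}|Y^n-\tilde Y^n|\arrowp0$ and $\int_{\sigma_{i-1}}^{\sigma_i}\|Z^n-\tilde Z^n\|^2\,ds\arrowp0$, and together with the convergence of the auxiliary scheme this propagates convergence of $(Y^n,Z^n)$ onto $[\sigma_{i-1},\sigma_i)$; convergence of $K^n$, including $\Delta K^n_{\sigma_i}\arrowp\Delta K_{\sigma_i}$, follows from (\ref{eq1.2}) once $Y^n,Z^n$ converge. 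Piecing the finitely many intervals together, and absorbing the vanishing-probability event where the breakpoints fail to match by means of the bounds of Proposition \ref{prop4.1}, yields $(Y^n,Z^n,K^n)\arrowp(Y,Z,K)$ in $\SSS\times\PP\times\SSS$.
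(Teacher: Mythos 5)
Your proof follows essentially the same route as the paper's: backward induction over the intervals $[\sigma_{i-1},\sigma_i)$, applying the local convergence result of Proposition \ref{prop4.6} on each one and tracking the left limits $Y^n_{\sigma_i-}=\Pi_{D^{i-1}}(Y^n_{\sigma_i})\arrowp Y_{\sigma_i-}$ at the breakpoints. The auxiliary comparison you build via Corollary \ref{cor4.5} to handle the $n$-dependent terminal values is precisely the content of Remark \ref{rem4.7}, and your uniform-integrability ($p<2$) argument for upgrading convergence in probability of the terminal values is a detail the paper leaves implicit, so the argument is correct.
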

\begin{proof}
By Proposition \ref{prop4.6},
\[
\sup_{\sigma_k\leq t\leq T}|Y^n_t-Y_t|\arrowp0,
\quad\int_{\sigma_k}^T\|Z^n_s-Z_s\|^2\,ds\arrowp0
\]
and
\[
\sup_{\sigma_k\leq t\leq T}|n\int_{\sigma_k}^{t}
(Y^n_s-\Pi_{D^k}(Y^n_s))\,ds -(K_t-K_{\sigma_k})|\arrowp0.
\]
Since
\[Y^n_{\sigma_k-}=\left\{
\begin{array}{ll}
\Pi_{D^{k-1}}(Y^n_{\sigma_k}),&\mbox{\rm if }
\dist(D^k,D^{k-1})>1/n\,,n\ge N ,\\
Y^n_{\sigma_k}, &\mbox{\rm otherwise, }
\end{array}
\right.
\]
it is clear that $Y^n_{\sigma_k-}\arrowp
\Pi_{D^{k-1}}(Y_{\sigma_k})=Y_{\sigma_k-}$. Similarly, if
$Y^n_{\sigma_i-}\arrowp Y_{\sigma_i-}$ for $i=k,k-1,\dots,1$ then
by Proposition \ref{prop4.6},
\[
\sup_{\sigma_{i-1}\leq t\leq \sigma_i}|Y^n_t-Y_t|\arrowp0,
\quad\int_{\sigma_{i-1}}^{\sigma_i}\|Z^n_s-Z_s\|^2\,ds\arrowp0
\]
and
\[
\sup_{\sigma_{i-1}\leq t
\leq \sigma_i}|n\int_{\sigma_{i-1}}^{t}
(Y^n_s-\Pi_{D_k}(Y^n_s))\,ds-(K_t-K_{\sigma_{i-1}})|\arrowp0.
\]
Consequently,  $Y^n_{\sigma_{i-1}-}\arrowp
\Pi_{D^{i-2}}(Y_{\sigma_{i-1}})=Y_{\sigma_{i-1}-}$ for $i\geq 2$.
Using backward induction completes the proof.
\end{proof}

\begin{theorem}
\label{thm4.9} Assume \mbox{\rm(H1)--(H4)}. Let
$(\{Y^n,Z^n,K^n)\}$ be a sequence of solutions of
\mbox{\rm(\ref{eq1.2})}. Then
\[
(Y^n,Z^n,K^n)\arrowp (Y,Z,K)\quad\mbox{in }{\cal S}\times{\cal
P}\times{\cal S},
\]
where $(Y,Z,K)$ is a unique solution of \mbox{\rm(\ref{eq1.1})}.
\end{theorem}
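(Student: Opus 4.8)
The plan is to repeat the scheme of the proof of Theorem~\ref{thm3.1}, with the RBSDE solutions in the discretized regions replaced by their penalized approximations and with Proposition~\ref{prop2.5} replaced by its penalized analogue, Proposition~\ref{prop4.2}. As there, I would first treat the bounded case, assuming $D_t\subset B(0,N)$ for all $t\in[0,T]$, and remove this restriction at the end by the localization used in Step~2 of Theorem~\ref{thm3.1}.

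In the bounded case I would discretize $\DD$ exactly as in Step~1 of Theorem~\ref{thm3.1}, building from the stopping times $\sigma^j_i$ (in the flexible form of Remark~\ref{rem3.10}) the piecewise-constant regions $\DD^j$ with $\sup_{t\le T}\rho(D^j_t,D_t)\arrowp0$. For each fixed $j$ the region $\DD^j$ is piecewise constant, so Proposition~\ref{prop4.8} applies and yields $(Y^{j,n},Z^{j,n},K^{j,n})\arrowp(Y^j,Z^j,K^j)$ as $n\to\infty$, where $(Y^{j,n},Z^{j,n},K^{j,n})$ is the penalized solution \mbox{\rm(\ref{eq1.2})} in $\DD^j$ and $(Y^j,Z^j,K^j)$ is the RBSDE solution in $\DD^j$. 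Moreover, the construction of $(Y,Z,K)$ in Theorem~\ref{thm3.1} shows that $(Y^j,Z^j,K^j)\to(Y,Z,K)$ in $\SSS\times\PP\times\SSS$ as $j\to\infty$.

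The heart of the argument is to compare the penalized solution $(Y^n,Z^n,K^n)$ for $\DD$ with $(Y^{j,n},Z^{j,n},K^{j,n})$ by means of Proposition~\ref{prop4.2} with a fixed $p\in(1,2)$. Proposition~\ref{prop4.1} bounds $E(|K^n|_T+|K^{j,n}|_T)$ uniformly in $n$ and $j$, so for a prescribed $\varepsilon$ one can choose a stopping time $\sigma$ with $P(\sigma<T)\le\varepsilon$ on which $|K^n|$ and $|K^{j,n}|$ stay below some $M$. On $[0,\sigma)$ the two continuous penalization terms on the right-hand side of Proposition~\ref{prop4.2} are dominated by $\rho(D_{s-},D^j_{s-})\,d(|K^{n,c}|_s+|K^{j,n,c}|_s)$, because a point of one region lies within the Hausdorff distance of the other; hence they are bounded by $M\sup_{s\le T}\rho(D^j_s,D_s)$, which tends to $0$ as $j\to\infty$ uniformly in $n$. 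The terminal term $E|\bar Y^n_{\sigma-}|^p$ vanishes on $\{\sigma=T\}$ (both schemes carry terminal value $\xi$ and have no jump at $T$, so $\bar Y^n_{T-}=0$) and on $\{\sigma<T\}$ is controlled by H\"older's inequality together with the uniform $L^2$-bound of Proposition~\ref{prop4.1} and $P(\sigma<T)\le\varepsilon$. Since for fixed $j$ the factor $(Y^{j,n},Z^{j,n},K^{j,n})\arrowp(Y^j,Z^j,K^j)$ and $(Y^j,Z^j,K^j)\to(Y,Z,K)$, a standard $\lim_j\limsup_n$ (three-$\varepsilon$) argument yields the convergence of $Y^n$; the convergence of $Z^n$ in $\PP$ and of $K^n$ in $\SSS$ then follows from the a priori estimates of Proposition~\ref{prop4.1} by the standard arguments already used in Proposition~\ref{prop4.6} and Theorem~\ref{thm3.1}.

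The main obstacle is the pair of discontinuous terms $\int_0^{\sigma-}|\bar Y^n_{s-}|^{p-2}|\Pi_{D_{s-}}(Y^{j,n}_{s-})-Y^{j,n}_{s-}|\,d|K^{n,d}|_s$ and its $K^{j,n,d}$ analogue, since the penalized trajectories are not confined to the regions and the jump times of $K^{n,d}$ (for $\DD$) need not coincide with those of $K^{j,n,d}$ (for $\DD^j$). To deal with them I would exploit that in the bounded case, for $n\ge N$, the penalization of $\DD$ jumps exactly at the times where $\rho(D_{t-},D_t)>1/n$, so by choosing the spacing parameters $a^j_i$ of Remark~\ref{rem3.10} one can make the large jumps of $\DD^j$ fall at those same instants; there $\dist(Y^{j,n}_{s-},D_{s-})\le\dist(Y^{j,n}_{s-},D^j_{s-})+\rho(D^j_{s-},D_{s-})$ is of order $\rho$ plus a penalization error absorbed via the uniform bound on $\sum_{s}|\Delta K^{j,n}_s|^2$ from Proposition~\ref{prop4.1}, while the contributions of the remaining small jumps of $\DD$ are negligible for the same reason. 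Finally, the boundedness assumption is removed exactly as in Step~2 of Theorem~\ref{thm3.1}: one localizes by the exit times $\gamma_j$ of $|A|$ from $B(0,N_j)$, sets $D^j_t=D^{\gamma_j-}_t\cap B(A^{\gamma_j-}_t,N_j)$, applies the bounded case, and passes to the limit with Proposition~\ref{prop4.2} for $p<2$, using the stopping times $\tau_{j,k}=\inf\{t:\sup_{s\le t}|Y^n_s|>2N_j\}\wedge T$ to absorb the terms on $\{\tau_{j,k}<T\}$ via Tchebyshev's inequality.
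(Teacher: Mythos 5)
Your overall architecture coincides with the paper's: bounded case plus localization, discretization $\DD^j$ of $\DD$, Proposition \ref{prop4.8} for fixed $j$, comparison of $(Y^n,Z^n,K^n)$ with $(Y^{j,n},Z^{j,n},K^{j,n})$ via Proposition \ref{prop4.2}, and a $\lim_j\limsup_n$ argument. You also correctly isolate the crux: the discontinuous terms such as $\int_0^{\sigma-}|\Pi_{D_{s-}}(Y^{j,n}_{s-})-Y^{j,n}_{s-}|\,d|K^{n,d}|_s$ and its counterpart. But your resolution of that crux does not work. The problem is not that the ``large jumps of $\DD^j$'' fail to fall at the jump times of $\DD$: the grid points of $\DD^j$ that come from jumps of $D$ of size $>1/j$ are automatically jump times of the $\DD$-penalization for $n\ge j$, and there both penalized processes are projected onto the left limits of their respective regions, so the distance term is already $\le\rho(D^j_{t-},D_{t-})$. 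The real difficulty is the \emph{other} grid points, of the form $\sigma^j_{i-1}+a^j_i$, where $D^j$ jumps (possibly by more than $1/n$, triggering a jump of $K^{j,n,d}$) while $D$ does not; there one must show that $Y^n_{t-}$ is asymptotically close to $D_{t-}$, and no choice of $a^j_i$ can ``align'' this jump with a jump of the $\DD$-penalization, because there is nothing to align it with. Your proposed substitute --- absorbing the error via the uniform bound on $\sum_s|\Delta K^{j,n}_s|^2$ --- controls the sizes of the jumps of $K^{j,n}$, not the pointwise distance of a penalized trajectory from the region at a prescribed random time, so it does not close this gap.

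The paper's device, which is the one genuinely new ingredient of its proof, is to select the offsets $a^j_i\in[1/j,2/j]$ by a Fubini/mean-value argument applied to the estimate $E\int_0^T\dist(A_s,\partial D_s)|Y^n_s-\Pi_{D_s}(Y^n_s)|\,ds\le cn^{-1}$ coming from the continuous penalization term in Proposition \ref{prop4.1}, so that $Y^n_{\sigma^j_{i-1}+a^j_i}-\Pi_{D_{\sigma^j_{i-1}+a^j_i}}(Y^n_{\sigma^j_{i-1}+a^j_i})\arrowp 0$ as $n\to\infty$ (this is (\ref{eq4.9})); that is exactly the pointwise control at the spurious grid points which your argument lacks, and it is the reason Remark \ref{rem3.10} allows flexible offsets in the first place. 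A secondary point: in the bounded case you should take $p=2$ in Proposition \ref{prop4.2}, as the paper does, since for $p<2$ the weight $|\bar Y^n_{s-}|^{p-2}$ is unbounded and your claimed domination of the penalization terms by $M\sup_{s\le T}\rho(D^j_s,D_s)$ fails; the exponent $p<2$ is needed only in the unbounded localization step, where Corollary \ref{cor4.3} removes those weighted terms altogether.
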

\begin{proof}
{\em Step 1}. As in the proof of Theorem \ref{thm3.1} we first assume
additionally that (\ref{eq3.7}) is satisfied. For $j\in\N$ set
$\sigma_{j,0}=0$ and
\[
\sigma^j_{i}=(\sigma^j_{i-1}+a^j_{i})
\wedge\inf\{t>\sigma^j_{i-1};\rho(D_{t-},D_t)>1/j\}\wedge T,\quad
i\in\N,
\]
where $a^j_{i}\in[1/j,2/j]$ is a constant chosen via the following
procedure. Suppose that $\tau\equiv\sigma^j_{i-1}$ is such that
$\tau+1/j<T$. By Proposition \ref{prop4.1} there is $c>0$ such
that
\begin{align*}
&\int_0^{2/j}E(\dist(A_{\tau+s},\partial D_{\tau+s})
|Y^n_{\tau+s}-\Pi_{D_{\tau+s}}(Y^n_{\tau+s})|\,ds\\
&\qquad\leq E\int_0^{T}\dist(A_{s},\partial D_{s})
|Y^n_{s}-\Pi_{D_{s}}(Y^n_{s})|\,ds\le cn^{-1}
\end{align*}
for every $n\in\N$. Therefore we can find $s\in[1/j,2/j]$, which
we denote by $a^j_{i}$, such that $E(\dist(A_{\tau+s},\partial
D_{\tau+s})|Y^n_{\tau+s}-\Pi_{D_{\tau+s}}(Y^n_{\tau+s})|\rightarrow0$
as $n\rightarrow\infty$. Since $\dist(A_{\tau+s},\partial
D_{\tau+s})>0$,
$Y^n_{\tau+s}-\Pi_{D_{\tau+s}}(Y^n_{\tau+s})\arrowp0$ for
$s=a^j_{i}$.  It follows that the stopping times $\sigma^j_{i}$
have the property that
\begin{equation}
\label{eq4.9} Y^n_{\sigma_{j,i-1}+a_{i,j}}
-\Pi_{D_{\sigma^j_{i-1}+a^j_{j}}}(Y^n_{\sigma^j_{i-1}+a^j_{j}})\arrowp0,
\quad j,i\in\N.
\end{equation}
Now let us define
$\DD^j=\{D^j_t;\,t\in[0,T]\},\xi^j,A^j=\{A^j_t;\,t\in[0,T]\}$ as
in Step 1 of the proof Theorem \ref{thm3.1} (observe that
$|\xi^j|\leq N$ and $|A^j_t|\leq N$, $j\in\N$). Let
$(Y^{j,n},Z^{j,n},K^{j,n})$ denote the solution of the BSDE
\begin{equation}
\label{eq4.10}
Y^{j,n}_t=\xi^j+\int^{T}_tf(s,Y^{j,n}_s,Z^{j,n}_s)\,ds
-\int^{T}_tZ^{j,n}_s\,dW_s +K^{j,n}_T-K^{j,n}_t,\quad t\in[0,T],
\end{equation}
where
\[
K^{j,n}_t=-n\int_0^{t} (Y^{j,n}_s-\Pi_{D^j_{s}}(Y^{j,n}_s))\,ds
-\sum_{\sigma^j_{n,i}\leq
t}(Y^{j,n}_{\sigma^j_{n,i}}-\Pi_{D^j_{{\sigma^j_{n,i}}-}}
(Y^{j,n}_{\sigma^j_{n,i}})) ,\quad t\in[0,T]
\]
with $\sigma^j_{n,0}=0$,
$\sigma^j_{n,i}=\inf\{t>\sigma^j_{n,i-1};\,\rho(D^j_t,D^j_{t-})>1/n\}\wedge
T$, $i=1,\dots,k^j_n$ with $k^j_n$ chosen so that
$P(\sigma^j_{n,k^j_n}<T)\to 0$ as $n\to\infty$. From Proposition
\ref{prop4.8} we deduce that for each $j\in\N$,
\begin{equation}
\label{eq4.11}
(Y^{j,n},Z^{j,n},K^{j,n})\rightarrow (Y^{(j)},Z^{(j)},K^{(j)})
\quad\mbox{\rm in }{\cal S}\times{\cal P}\times{\cal S},
\end{equation}
where $(Y^{(j)},Z^{(j)},K^{(j)})$ is a solution of  RBSDE in
$\DD^j$ with terminal value $\xi^j$. Moreover,  by Proposition
\ref{prop4.1} there exists $C>0$ such that for $j,n\in\N$,
\begin{align*}
&E\Big(\sup_{t\leq T}|Y^{j,n}_t|^2
+\int_0^T\|Z^{j,n}_s\|^2\,ds+\sum_{s\leq T}|\Delta K^{j,n}_s|^2
+\int_0^T\dist(A^j_{s-}\,,\partial D^j_{s-})\,d|K^{j,n}|_s\Big)\\
&\qquad\le C\Big(N^2+E\int_0^T|f(s,0,0)|^2\,ds+\|A\|^2_{{\cal
H}^2}\Big).
\end{align*}
Consequently, by the same arguments as in the proof of Theorem
\ref{thm3.1} we deduce that for every $\varepsilon>0$ there exist
$M>0$,  stopping times $\sigma^n_j\leq T$ and $j_0\in\N$  such
that for every $j\geq j_0$ and $n\in\N$,
\begin{equation}
\label{eq4.12} P(\sigma^n_j< T)\leq \varepsilon,\quad
|K^{j,n}|_{\sigma^n_j-}\leq M.
\end{equation}
Similarly, by Proposition \ref{prop4.1} we show that for every
$\varepsilon>0$ there exist $M>0$ and stopping times $\tau^n\leq
T$ such that for every $n\in\N$,
\begin{equation}
\label{eq4.13} P(\tau^n< T)\leq
\varepsilon,\quad|K^{n}|_{\tau^n-}\leq M.
\end{equation}
Putting $p=2$ and $\sigma=\sigma^n_j\wedge\tau^n$ in Proposition
\ref{prop4.2} we obtain
\begin{align*}
&E\big(( \sup_{ t<\sigma}|Y^{j,n}_t-Y^{n}_t|^2
+\int_0^{\sigma}\|Z^{j,n}_s-Z^{n}_s\|^2\,ds\big)\\
&\qquad\leq C\Big(E(|Y^{j,n}_{\sigma-}-Y^{n}_{\sigma-}|^2
+\int_0^{\sigma-}\rho(D^j_{s-},D_{s-})\,d(|K^{j,n,c}|_s+|K^{n,c}|_s)\\
&\qquad\quad+\sum_{\sigma^j_{n,i}<\sigma}
|\Pi_{D^j_{\sigma^j_{n,i}-}}(Y^{n}_{\sigma^j_{n,i}-})-Y^{n}_{\sigma^j_{n,i}-}|
\cdot|\Delta K^{j,n}_{\sigma^j_{n,i}}|\\
&\qquad\quad+\sum_{\sigma_{n,i}<\sigma}
|\Pi_{D_{\sigma_{n,i}-}}(Y^{j,n}_{\sigma_{n,i}-})-Y^{j,n}_{\sigma_{n,i}-}|
\cdot|\Delta K^{n}_{\sigma_{n,i}}|\Big).
\end{align*}
If $\rho(D_{\sigma^j_{n,i}},D_{\sigma^j_{n,i}-})>0$ then
$Y^n_{\sigma^j_{n,i}-}\in D_{\sigma^j_{n,i}-}$ and
\[
|\Pi_{D^j_{\sigma^j_{n,i}-}}(Y^{n}_{\sigma^j_{n,i}-})
-Y^{n}_{\sigma^j_{n,i}-}|
\leq\rho(D^j_{\sigma^j_{n,i}-},D_{\sigma^j_{n,i}-})
\]
for $n\ge\max(j,N)$. Using the above estimate if
$\rho(D_{\sigma^j_{n,i}},D_{\sigma^j_{n,i}-})>0$ and (\ref{eq4.9})
if $\rho(D_{\sigma^j_{n,i}},D_{\sigma^j_{n,i}-})=0$ we obtain
\begin{align*}
&\limsup_{n\to\infty}E\big(\sum_{\sigma^j_{n,i}<\sigma}
|\Pi_{D^j_{\sigma^j_{n,i}-}}(Y^{n}_{\sigma^j_{n,i}-})
-Y^{n}_{\sigma^j_{n,i}-}|\cdot|\Delta K^{j,n}_{\sigma^j_{n,i}}|\big)\\
&\qquad\leq \limsup_{n\to\infty}E(\int_0^{\sigma-}
\rho(D^j_{s-}\,,D_{s-})\,d(|K^{j,n,d}|_s).
\end{align*}
Similarly,
\begin{align*}
&\limsup_{n\to\infty}E\big(\sum_{\sigma_{n,i}<\sigma}
|\Pi_{D_{\sigma_{n,i}}-}(Y^{j,n}_{\sigma_{n,i}-})
-Y^{j,n}_{\sigma_{n,i}-}|\cdot|\Delta K^{n}_{\sigma_{n,i}-}|\big)\\
&\qquad\leq \limsup_{n\to\infty}E(\int_0^{\sigma-}
\rho(D^j_{s-}\,,D_{s-})\,d(|K^{n,d}|_s).
\end{align*}
Hence
\begin{align*}
&\limsup_{n\to\infty}E\big(( \sup_{ t<\sigma}|Y^{j,n}_t-Y^{n}_t|^2
+\int_0^{\sigma}\|Z^{j,n}_s-Z^{n}_s\|^2\,ds\big)\\
&\qquad\leq CE\big(| \xi^j-\xi|^2+2\varepsilon N^2
+2M\min(\sup_{t\leq T}\rho(D^j_{t-},D_{t-}),N)\big).
\end{align*}
Consequently,
\begin{equation}
\label{eq4.14} \lim_{j\to\infty}
\limsup_{n\to\infty}E\big(\sup_{t<\sigma}|Y^{j,n}_t-Y^{n}_t|^2
+\int_0^{\sigma}\|Z^{j,n}_s-Z^{n}_s|^2\,ds\big)\leq 2C\varepsilon
N^2,
\end{equation}
because $\lim_{j\to\infty}\sup_kE|\xi^j-\xi|^2=0$ and
$E\sup_{t\leq T}\rho(D^j_{t-},D_{t-})\rightarrow0$ by
(\ref{eq3.7}), (\ref{eq3.8}) and Remark \ref{rem3.10}.
Furthermore, by the Step 1 of the proof of Theorem
\ref{thm3.1} and Remark \ref{rem3.10},
\begin{equation}
\label{eq4.18} (Y^{(j)},Z^{(j)},K^{(j)})\arrowp
(Y,Z,K)\quad\mbox{in }{\cal S}\times{\cal P}\times{\cal S},
\end{equation}
where $(Y,Z,K)$ is a unique solution of (\ref{eq1.1}).  Combining
(\ref{eq4.11}) with (\ref{eq4.14}) and (\ref{eq4.18}) we conclude
that  $(Y^n,Z^n,K^n)\arrowp (Y,Z,K)$ in ${\cal S}\times{\cal P}
\times{\cal S}$ under the additional assumption (\ref{eq3.7}).
\smallskip\\
{\em Step 2}. In the general case we will  use  arguments from
Step 2 of the proof of Theorem \ref{thm3.1}. Let $N_j$,
$\gamma_j$,
$\DD^j=\{D^j_t;\,t\in[0,T]\},\xi^j,A^j=\{A^j_t;\,t\in[0,T]\}$ be
defined as in that step. Note that $ D^j_t\subset B(0,2N_j)$. Let
$(Y^{j,n},Z^{j,n},K^{j,n})$ be a solution of (\ref{eq4.10}) with
$\sigma^j_{n,0}=0$,
$\sigma^j_{n,i}=\inf\{t>\sigma^j_{n,i-1};\,\rho(D^j_t\cap
B(0,n),D^j_{t-}\cap B(0,n))>1/n\}\wedge T$, $i=1,\dots,k^j_n$, and
$k^j_n$ is chosen so that $P(\sigma^j_{n,k^j_n}<T)\to 0$ as
$n\to\infty$. From the  first part of the proof we know that for
each $j\in\N$,
\begin{equation}
\label{eq4.19} (Y^{j,n},Z^{j,n},K^{j,n})\rightarrow (\tilde
Y^{(j)},\tilde Z^{(j)},\tilde K^{(j)})\quad\mbox{\rm in }{\cal
S}\times{\cal P}\times{\cal S},
\end{equation}
where $(\tilde Y^{(j)},\tilde Z^{(j)},\tilde K^{(j)})$  is a
solution of RBSDE in $\DD^j$ with terminal value $\xi^j$.
Moreover, by Proposition \ref{prop4.1} there exists $C>0$ such
that for $j,n\in\N$,
\begin{align*}
&E\Big(\sup_{t\leq T}|Y^{j,n}_t|^2
+\int_0^T\|Z^{j,n}_s\|^2\,ds+\sum_{s\leq T}|\Delta K^{j,n}_s|^2
+\int_0^T\dist(A^j_{s-},\partial D^j_{s-})\,d|K^{j,n}|_s\Big)\\
&\qquad\le C \Big(E\big(|\xi|^2
+\int_0^T|f(s,0,0)|^2\,ds\big)+\|A\|^2_{{\cal H}^2}\Big).
\end{align*}
Set  $\tau_{j,n}=\inf\{t;\sup_{s\leq t}|Y^{n}_s|>2N_j\}\wedge T$
and observe that by Tschebyshev's inequality and Proposition
\ref{prop4.1},
\[
P(\tau_{j,n}<T)\leq P(\sup_{t\leq T}|Y^{n}_t|>2N_j) \leq
(2N_j)^{-2}C \Big(E\big(\xi^2+\int_0^T|f(s,0,0)|^2\,ds\big)
+\|A\|_{{\cal H}^2}\Big),
\]
which implies that $\lim_{j\to\infty}\sup_nP(\tau_{j,n}<T)=0$. Let
$\sigma=\tau_{j,k}\wedge\gamma_j$.  Since $Y^n_t\in B(0,2N_j)$ for
$t<\sigma$, we may and will assume that
$\DD^{j,\sigma-}=\DD^{\sigma-}$. By Corollary \ref{cor4.3},
\begin{align*}
E\big(( \sup_{ t<\sigma}|Y^n_t-Y^{j,n}_t|^p\big)&
\leq CE(| Y^n_{\sigma-}-Y^{j,n}_{\sigma-}|^p)\\
&\leq E|\xi-\xi^{j}|^p
+E| Y^n_{\sigma-}-Y^{j,n}_{\sigma-}|^p{\bf 1}_{\{\sigma<T\}}\\
&\leq E|\xi-\xi^{j}|^p+(E|
Y^n_{\sigma-}-Y^{j,n}_{\sigma-}|^2)^{p/2}(P(\sigma<T))^{(2-p)/2}.
\end{align*}
Hence $\lim_{j\to\infty}\limsup_{n\to\infty}E(\sup_{t<\sigma}|
Y^n_{t}-Y^{j,n}_{t}|^p=0$. Consequently, for every
$\varepsilon>0$,
\begin{equation}
\label{eq4.17} \lim_{j\to\infty}\limsup_{n\to\infty}
P\big(\int_0^T\|Z^n_s-Z^{j,n}_s\|^2\,ds+\sup_{t\leq T}|
Y^n_{t}-Y^{j,n}_{t}|>\varepsilon\big)=0.
\end{equation}
The desired convergence follows from (\ref{eq4.19}),
(\ref{eq4.17}), because from Step 2 of the proof of Theorem
\ref{thm3.1} it follows that that $(\tilde Y^{(j)},\tilde
Z^{(j)},\tilde K^{(j)})\arrowp(Y,Z,K)$, where $(Y,Z,K)$ is a
unique solution of (\ref{eq1.1}).
\end{proof}

\begin{remark}
Using arguments from the proof of Step 2 of Theorem \ref{thm4.9}
one can show that in fact assumption (\ref{eq3.4}) in Proposition
\ref{prop4.6} (and consequently in the convergence statement in
Remark \ref{rem4.7}) is superfluous.
\end{remark}
\noindent{\bf Acknowledgements}\\
Research supported by Polish NCN grant no. 2012/07/B/ST1/03508.

\end{document}